\documentclass[a4paper,12pt, reqno]{amsart}
\usepackage[utf8]{inputenc}
\usepackage{amsthm}
\usepackage{amsmath}
\usepackage{bm}
\usepackage{enumitem}
\usepackage{amsfonts}
\usepackage{amssymb}
\usepackage{mathtools}
\usepackage{appendix}
\usepackage{mathrsfs}
\usepackage{setspace}
\usepackage{comment}
\usepackage{xcolor}
\usepackage{ulem}
\usepackage{todonotes}
\usepackage{thmtools} 
\usepackage[foot]{amsaddr} 
\usepackage{appendix}
\RequirePackage[a4paper,top=2.54cm,bottom=2.54cm,left=1.90cm,right=1.90cm,%
                headsep=1em,includehead,includefoot]{geometry}
\usepackage[pdfdisplaydoctitle,colorlinks,breaklinks,urlcolor=blue,linkcolor=blue,citecolor=blue]{hyperref} 
\usepackage[nameinlink,capitalise]{cleveref}

\newcommand{\E}{\ensuremath{\mathbb{E}}}
\providecommand{\U}[1]{\protect\rule{.1in}{.1in}}
\newtheorem{theorem}{Theorem}

\newtheorem{definition}[theorem]{Definition}

\newtheorem{lemma}[theorem]{Lemma}

\newtheorem{proposition}[theorem]{Proposition}
\newtheorem{remark}[theorem]{Remark}

\newcommand{\T}{\mathbb{T}}
\newcommand{\R}{\mathbb{R}}

\newcommand{\EE}[1]{\mathbb{E}\left[#1\right]}
\newcommand{\M}{\mathbf{M}}

\renewcommand{\k}{\kappa}
\renewcommand{\U}{\mathcal{U}}

\newenvironment{acknowledgements}{%
  \begin{abstract}
}{%
  \end{abstract}
}

\newcommand{\Div}{\text{div}}
\title[Turbulent stretching of FENE polymers via  stochastic scaling $\&$ singular limits]
{Turbulent stretching of FENE dumbbell polymer model via special stochastic scaling and singular limits}
\author[Federico Butori]{Federico Butori}
\address{Scuola Normale Superiore, Piazza dei Cavalieri, 7, 56126 Pisa, Italy}
\email{\href{mailto:federico.butori at sns.it}{federico.butori at sns.it}}
\author[Yassine Tahraoui]{Yassine Tahraoui}
\address{Scuola Normale Superiore, Piazza dei Cavalieri, 7, 56126 Pisa, Italy}
\email{\href{mailto:yassine.tahraoui at sns.it}{yassine.tahraoui at sns.it}}
\date\today
\keywords{FENE model,  turbulence,  coil-stretch transition,  stretching noise, scaling limit,  singular limit}
\subjclass{35Q84, 35B25, 60H15, 76F99}
\begin{document}
	\begin{abstract}
    We investigate  the stretching mechanism of \textit{Finitely Extensible Nonlinear Elastic} (FENE) model of polymers in a random turbulent flow. The turbulent model includes a   dominant  space-scale $\ell\sim N^{-1}$, a dominant time-scale $\tau$, and is white in time. Under suitable scaling assumption,  the polymer density equation, initially a stochastic  Fokker-Planck equation in the presence  of transport-stretching noise, converges weakly  as $N\uparrow \infty$ to a limit deterministic equation with a new extra term, a second order operator. This operator, whose shape has been predicted in the physical literature by other arguments, express a sort of average `turbulent stretching' effect. With respect to other derivation of this effective model, the main novelty of our approach is that the deterministic limit is obtained pathwise, without having to take averages with respect to different realizations of the random flow.
    Next, we consider the limit as $\tau \downarrow 0$ and we identify the stationary distribution of the polymer length. The analysis is carried out in appropriate weighted spaces, which take into account the singularity of the FENE force near the boundary and the no-flux boundary condition, and combines stochastic scaling limit and singular limit techniques.    \end{abstract}
	\maketitle
\tableofcontents

\section{Introduction}

In this work, we are interested in the effect of turbulence on polymers, by using small-scale stochastic modeling  of the velocity of the flow (Kraichnan-Batchelor regime).  Namely, the velocity of the small scales of the  flow is given by some stochastic process, which is  white in time  with some  smooth covariance operator in space. The  kinetic equation of the polymers  is given by the \textit{Finitely Extensible Nonlinear Elastic} (FENE) model.\\

Polymers are complex molecular systems,  which can be modeled using a chain of springs, see \textit{e.g.}  \cite{BriTL09}. They possess many interesting properties, including the ability to change large-scale statistics of the advecting flow. In particular,  the presence of low concentrations of dilute polymers produce  a drag reduction in turbulence and  leads to energy savings \cite{Gyr1995drag}.
 In a turbulent fluid, polymers are usually found in two states called coil and stretched. The coil state is like an  ellipsoidal rolled chain, which may be more or less elongated. The stretched state develops when the chain becomes elongated, more similar to a straight line than a sphere. Turbulent flows may facilitate the appearance of the stretched state. That happens because polymers are deformed and stretched by the gradients of the flow's velocity  and random stretches may lead to a macroscopic increase of the polymer's elongation (see \cite{FalGawVer} for an introduction). When the polymers pass from one state to the other we speak of coil-stretch transition. Understanding the stretching of polymers  by a random flow is  one of the 
 central question in the elastic turbulence theory and we refer \textit{e.g.} to  \cite{Steinberg2021elastic}  for its various industrial applications.\\

There exist several  physical studies about the coil–stretch transition in random flows in the case of the Hookean dumbbell model, for instance \cite{Balk,Celani2005polymer,Chertkov}. In such regimes, as first understood in \cite{Balk} with an argument based on large deviations, the polymers' elongation show a power-law distribution (heavy tails), with certain parameters related to turbulence statistics, which are difficult to identify explicitly. The knowledge of these parameters is very important because it allows us to predict the coil-stretch transition. In other words, this makes it possible to determine the threshold from which the polymer begins to exert an effective feedback on the flow and to cause a drag reduction.
The analysis of the FENE model in turbulent flows is more sparse and we refer to \cite{PicardoLanceVinc, Afonso2005nonlinear}. A discussion on the relation between our results and \cite{Afonso2005nonlinear}  will be considered in \autoref{sect-physics}.
Now, let us recall the FENE model.
\subsection{The FENE model.} Let us give a brief presentation of the polymer modeling and we refer  \textit{e.g.}  \cite[Section 4.2.]{BriTL09} for further details.
 Let   $R_t$  be a   vector, which  represents the orientation and elongation of  the polymer chain and   $X_t$ (the    center  of  mass) denotes its position. The polymer is embedded into a fluid having velocity $u(t,x)$, which stretches $R_t$ by $\nabla u(x,t)$. The equation for $R_t$ contains also a damping term with relaxation time $\beta$ and Brownian fluctuations which keeps the equilibrium length, at rest ($u=0$), equals to $R_0$. Mathematically, $R_t$ is a stochastic process and  the dynamics is given by 
  \begin{align}\label{2-1}
	\begin{cases}
	dR_t&=\nabla u(t,X_t) R_tdt-\dfrac{1}{\beta}\nabla \U(R_t)dt+\sqrt{\dfrac{2R_0^2}{\beta}}d\mathcal{W}_t, \\
	dX_t&=u(t,X_t)dt.
	\end{cases}
	\end{align}
   
   Up to considering $R_t/R_0$, we can always set $R_0=1$. 
		Commonly, two types of potentials $\U$ are employed: the Hookean model (\textit{i.e.} $\U(X)=\frac{1}{2}\vert X\vert^2$) and \textit{Finitely Extensible Nonlinear Elastic} (FENE) model \textit{i.e.} $\U(r)=-\dfrac{\k  b}{2} \log(1-\dfrac{\vert r\vert^2}{b})$ where $\k>0$ is the spring constant and  $\sqrt{b}$ is the upper limit of the length extension of the polymer chain. Notice that the force $F=\nabla\U$, in  FENE model, tends towards infinity when the norm of $R_t$ tends towards a limiting value $\sqrt{b}$. On the other hand, we have $ \nabla \U(r)=\dfrac{\k r}{1-\vert r\vert^2/b}$. 
         Recall that $R_t\in B(0,\sqrt{b})$ and by 
        considering  $\widetilde{R}_t=R_t/\sqrt{b} \in B(0,1) $ we get
\begin{align}\label{FENE}
	\begin{cases}
	d\widetilde{R}_t&=\nabla u(t,X_t) \widetilde{R}_tdt-\dfrac{\kappa}{\beta}\dfrac{\widetilde{R}_t}{1-\vert \widetilde{R}_t\vert^2}dt+\sqrt{\frac{2}{\beta}} d\widetilde{\mathcal{W}}_t, \\
	dX_t&=u(t,X_t).
	\end{cases}
	\end{align}
  Let   $ x\in \mathbb{T}^d \text{ and } r\in D:=B(0,1) \subset \mathbb{R}^d, d=2,3$ and 
consider	\eqref{FENE}
	with a given law $f_0 (x, r)$ of the initial datum,  the associated
	Fokker-Planck equation for the polymers density $f := f (x, r, t)$ is given by
	\begin{align*}
	\begin{cases}\partial_tf+\Div_x(uf)+\Div_r((\nabla u r)f)=\dfrac{1}{\beta}\Div_r \big(\dfrac{\k r}{1-\vert r\vert^2}f+	\nabla_rf\big)\\	
		f|_{t=0}=f_0.
			\end{cases}\end{align*}
            

    Our aim is to derive rigorously (\textit{via} stochastic diffusion and singular limits) an explicit formula, depending on various modelling parameters, describing the stationary polymers' elongation distribution in a (synthetic) turbulent regime. We also discuss  the relation between the  resulting formula and  the one obtained in the physical literature by statistical and numerical approaches \cite{Afonso2005nonlinear,Balk}.
    
\subsection{The main result}\label{scales-modeling-sect}
Suppose that the fluid velocity $u$ can be decomposed into a large-scale component $u_L$ and small-scale components $u_s$. In other words,  $u=u_L+u_s$.  In general, $u_L$ is a smooth function and $u_s$ contains the fluctuations i.e. $u_s$ is the `turbulent' part of the flow. We can assume for example that  $u_L\in C([0,T],C^2(\T^2;\R^2))$    such    that    $\Div_x(u_L)=0$ while we model $u_s$ with a synthetic and stochastic model of a turbulent flow 
\begin{equation}\label{small-scale-model-main}
    u_s(x,t)=\sum_{k\in K}\sigma_{k}^{N,\tau}\left(  x\right)\circ  \dot W_{t}^{k}.
    \end{equation}
    Where the definition of the noise coefficients $\sigma_k^{N, \tau}$ is given below in \autoref{assumption_noise-2D}. The quantities $N^{-1}, \tau$ appearing in the definition of the $\sigma_k$ represent the dominant spatial scale of the noise and its dominant time scale $\tau \rightarrow 0$.
We choose the Stratonovich multiplication $\circ$ both in virtue of Wong-Zakai principle (a white noise is the idealization of smooth noise) and because of conservation laws.
Since we are interested on the effect of small scales on the dynamic of polymers, without loss of generality (see \textit{e.g.} \cite{Flandoli-Tahraoui}) we can assume that $u_L\equiv 0$ in the rest of the paper. \\

Now, we are in position to formulate the problem we want to study. 
We consider a dilute  family of polymers subject to equations
\eqref{FENE}, thus described by the kinetic equation for the density $f^{N,\tau}:=f^{N,\tau}\left(
x,r,t\right)  $ of polymers with position $x$ and length $r$ at time $t$, where $u^{N, \tau}$ is given by \eqref{small-scale-model-main}.
 Thus  $f^{N,\tau}$ satisfies the following Stochastic PDE
\begin{align}\label{Stratomovich-form}
	\begin{cases}&d f^{N,\tau}+\sum_{k\in K}%
\sigma_{k}^{N,\tau}\cdot \nabla_xf^{N,\tau} \circ dW_{t}^{k}+ \sum_{k\in K}%
(\nabla_x\sigma_{k}^{N,\tau}r)\cdot\nabla_rf^{N,\tau}\circ dW_{t}^{k}\\&=	\dfrac{1}{\beta}\Div_r \big(\k rF(r)f^{N,\tau}+ \nabla_rf^{N,\tau}\big) dt; \qquad (x,r)\in \mathbb{T}^2\times D,\\	
 	&f^{N,\tau}|_{t=0}=f_0^\tau.
			\end{cases}\end{align}
      As usual, in order to perform the rigorous mathematical analysis, we understand \eqref{Stratomovich-form} in its formally equivalent It\^o form. Based on \cite[Lemma 2 and 3]{Flandoli-Tahraoui}, by assuming that the noise is space-homogeneous and satisfies mirror
symmetry property (see \autoref{assumption_noise-2D}), the It\^{o} form of the stochastic Fokker Planck equation \eqref{Stratomovich-form} is given by
	\begin{align}\label{Ito-FP}
		\begin{cases}&df^{N,\tau}=	\dfrac{1}{\beta}\Div_r \left(\k r F(r)f^{N,\tau}+	\left(I +\dfrac{\beta}{2}  A^{N,\tau}(r) \right)\nabla_rf^{N,\tau}\right)dt +\alpha_{N,\tau}\Delta_x f^{N,\tau}dt\\[0.15cm]
            &-\sum_{k\in K}\sigma_k^{N,\tau}.\nabla_xf^{N,\tau}dW^k_t -\sum_{k\in K}(\nabla_x \sigma_k^{N,\tau}r).\nabla_rf^{N,\tau} dW^k_t  ;\quad (x,r)\in \mathbb{T}^2\times D,\\[0.15cm]
			&f^{N,\tau}|_{t=0}=f_0^\tau.
	\end{cases}\end{align}
    Where the matrix $A^{N,\tau}$ is computed in \Cref{lem-matrix-r}. The presence of the term involving the matrix $A^{N, \tau}$, coming from the It\^{o}-Stratonovich corrector, alters the structure of the equation and of the equilibrium state of the system (\textit{c.f.} \autoref{sec:mod_operator}). 
The system is completed by adding appropriate no-flux boundary conditions, see 
\autoref{subsect-opert-deter} and  also \autoref{Def-sol-QR-*}.\\
 Let us now introduce 
 $$\M^\alpha_0 (r) = \mathbf{Z}_0^{-1} \left(
\dfrac{1 - \vert r\vert^{2}}{1 + \alpha \vert r\vert^{2}}
\right)^{\frac{\kappa}{2\left(1 + \alpha\right)}}, \qquad \mathbf{Z}^\alpha_0=\int_D \left(
\dfrac{1 - \vert r\vert^{2}}{1 + \alpha \vert r\vert^{2}}
\right)^{\frac{\kappa}{2\left(1 + \alpha\right)}} dr
.$$
If one formally takes the expected value of \eqref{Ito-FP}, under the scaling assumption in \autoref{assumption_noise-2D}, one can easily conjecture that 
\begin{align*}
    \EE{f^{N. \tau}} &\xrightarrow[]{N\rightarrow\infty} f^\tau, \end{align*}
where $f^\tau$ solves
    \begin{align}\label{formal-limit}
    \partial_t f^\tau &= \frac{1}{\beta}\Div_r \left(\k r F(r)f^{\tau}+	\left(I +\dfrac{\beta}{2}  A^{\tau}(r) \right)\nabla_rf^{\tau}\right)dt 
\end{align} 
If then $\alpha$ is chosen properly with respect to the parameters $\beta, \kappa$ of the problem, one can check that  $$f^\tau=\M_0^\alpha\otimes \rho(x)$$ is a stationary solution of \eqref{formal-limit}, for every $\rho(x)$. By then taking another limit where $\beta \sim \tau $\footnote{Recall that $\beta$  and $\tau$ denote polymer relaxation time and dominant time-scale of the fluid flow, respectively.} are of the same order and converge to zero while $\beta A^\tau$ converges to some $A$ (see \autoref{assumption_noise-2D}), one expects also that $f^\tau$ actually converges to a stationary solution of the form $\M_0^\alpha \otimes \rho_0(x)$, for some $\alpha$, still depending on all the parameters and their relations.
The main contribution of this manuscript is to show that, in a specific range of parameters (see \autoref{TH1-*_nocutoff}), the convergence in mean actually holds pathwise (in a weak sense), namely to prove that, as $N\to \infty$ and then $\tau\sim  \beta\to 0$, assuming $\displaystyle\lim_{\tau\to 0} f^\tau_0=f_0 \text{ in } L^2(\T^2\times D; \dfrac{1}{\mathbf{M}_0}dxdr)$, setting $\rho_0=\int_Df_0dr $ it holds
 \begin{align*}
   &f^{N,\tau}   \rightharpoonup \rho_0\otimes\mathbf{M}_0  \quad \text{ in }  L^2(\Omega;L^2(0,T;L^2(\T^2\times D; \dfrac{1}{\mathbf{M}_0}dxdr))).
 \end{align*} 
Outside of this regime of parameters, namely when the intensity of the turbulence is large with respect to the relaxation time of the polymers and to $\kappa$, we encounter serious difficulties in the analysis, so that we are even unable to construct meaningful solutions to \eqref{Ito-FP}. These difficulties arise when trying to control the effect of the noise on the solution $f$ for values of $r$ near the maximal elongation (where the restoring force $F$ becomes singular) (\textit{c.f.} \autoref{rem:coercivity_fails}). To circumvent this problem we regularize the system, introducing a cut-off on the noise, then we perform the $N\rightarrow +\infty$ scaling limit on the system with cut-off. Next we remove the cut-off and we perform the $\tau \rightarrow 0$ limit, obtaining at the end the same stationary density as above.  
   We refer to \autoref{Section-main-results} for the rigorous statements and the assumptions. The  novelty of our work lies
in the rigorous analysis based on analytical tools derived from the stochastic diffusion limit and singular limits of kinetic equations. We also refer to \autoref{sect-physics}  for a  discussion on the physical part.
\\

Next, let us present a brief overview of existing mathematical results that are related to our work.
              \subsection{Stochastic diffusion and singular limits}
               The origin of stochastic diffusion limits, based on the It\^o-Stratonovich corrector, is the paper \cite{Galeati} that threw a new light on passive scalars subject to turbulent models. The result of \cite{Galeati} has been generalized in several directions, showing in particular its strength for
nonlinear scalar problems (see \cite{FlaElibook} for a review). 
In particular, we refer to \cite{Flandoli2021scaling,Flandoli2024quantitative} for the results in this direction about  the  2D Euler  and  2D Navier-Stokes equations.    We refer  also to \cite[Section 1.2]{Flandoli-Tahraoui} for extended literature review on the stochastic diffusion limits and transport noise. It is worth underlying that the mentioned results concern scalar problems with transport noise. However,   when the turbulent fluid acts on vector fields,  in addition to   transport there is    
\textit{stretching}: namely the differential $D\phi_{t}$ of the Lagrangian flow $\phi_{t}$ of a fluid   produces a modification of the length of vectors $v$,  which leads to  an increase of length for many possible directions $v$. \\          

More challenging has been adapting the ideas to advected vector fields, because of lack of control on the stochastic stretching, when the diffusive scaling limit is performed. Nevertheless, positive results for particular models have been obtained, adapting the classical scaling of \cite{Galeati}, see  \textit{e.g.} \cite{ButFlaLuo, butori_meanfield_2025, butori3DHNSE, FL1,FlandoliLuo2024,Papini}. Recently, after considering a new scaling such as   the noise covariance goes to zero but a suitable covariance built on derivatives of the noise converges to a non zero limit, new results have been obtained in \cite{BFLT2024,Flandoli-Tahraoui} in the case of   stochastic transport-stretching. In \cite{BFLT2024}, we  introduced a background stochastic Vlasov equation behind stochastic transport and advection equations which gives additional information on the fluctuations and oscillations of solutions  based on Young measures.  In the case of a passive vector field the background Vlasov equation adds completely new statistical information to the stochastic advection equation and  the theory developed may help to recognize the existence of large values of the length of the vectors, where   the physical phenomenon of magnetic dynamo is an example. In \cite{Flandoli-Tahraoui}, the authors considered the new scaling by considering the Fokker-Planck equation \eqref{Stratomovich-form} of polymers density in the Hookean case and obtained that the polymers density equation  converges weakly, as the scaling parameter $N\uparrow \infty,$ towards the solution of a deterministic equation, with a new diffusion term in the variable $r$ reflecting the stretching effect on the dynamics. In \cite{tahraoui2025small}, the  author completed the study of \cite{Flandoli-Tahraoui} by studying  the singular limit of  Fokker-Planck equation of   polymers density as the dominant time-scale of small scale component of turbulent flow goes to zero and showed that the limit density has  generalized Cauchy distribution  for the end-to-end vector, with  certain parameters, related to turbulence modeling of the small scales.  \\

Finally, it is worth mentioning the asymptotic behavior with respect to some parameters appearing in kinetic equations have been studied extensively, where the the main interest when studying asymptotic is to derive equations of macroscopic quantities such as density and probability distribution with respect to mesoscopic variable such as particle velocity and the polymers end-to-end vector in our case. Without trying to be exhaustive,  the author studied  in \cite{Poupaud1992} the  asymptotic of electron distribution function in semiconductor kinetic theory where the equation is  given by Boltzmann transport equation for high electric fields. He proved that the limit  distribution function is given by the tensor product of the density (satisfying a  linear transport equation) and a probability distribution function of the velocity variable  satisfying an  appropriate PDE. 
We mention also   \cite{Masmoudi-Vlasov,Nieto2001high} for  the  asymptotic of the  Vlasov-Poisson-Fokker-Planck Systems. We refer \textit{e.g.} to \cite{goudon2005,jabin2000,Poupaud2000} for more results about   asymptotics of the kinetic equations.\\
\subsection{Physical considerations}\label{sect-physics}
We end this introduction by adding some physical considerations concerning the result of our work. In \cite{Afonso2005nonlinear} the stationary regime of the FENE model is investigated in a fully solvable model. Here, our model is similar, still based on a gaussian description of the turbulent flow, however in that work, the authors work on a fixed realization of the turbulent flow and then average over the realizations. In our setting this procedure would give immediately the correct result (taking the average of \eqref{Ito-FP}), but would completely disregard the fluctuations with respect to different realization of the flow. Our approach, based on a scaling limit with respect to an infinite separation of scales, can be thought as a law of large numbers with respect to theirs, and it is therefore more robust. In order to compare directly our result to theirs, recall that in their paper, the stationary distribution of polymer elongation (integrated along angular variables) in dimension $d=2$ is given by 
\begin{equation}\label{stationari_vincenzi}
f(R)=  \mathbf{Z}^{-1} R\left(1+ \frac{Wi}{2}\frac{R^2}{R_0^2}\right)^{-h}\left(1 - \frac{R^2}{b}\right)^h
\end{equation}
Where $R_0$ is the average length of the polymers in absence of flow due to thermal fluctuations, $b$ is the maximal elongation of the polymers, $\mathbf{Z}$ is a normalization constant and $Wi$ is the Weissenberg number which in random flows can be defined as $Wi=\Lambda \tau$ where lambda is the maximum Lyapunov exponent of the random flow and $\tau$ is the relaxation time of the polymers in absence of flow and 
$$h=\frac{b}{2{R_0^2} + bWi}.$$
Comparing their microscopic model with our rescaled one \eqref{FENE}, we see that in our model $b=1$ and $R_0^2= \kappa^{-1}$. Moreover, for the random flow defined in \autoref{assumption_noise-2D}, in the scaling limit, it holds that for $N\rightarrow \infty$, $\Lambda^N\rightarrow  2k_T$ (see \cite{monin_statistical_1987})
while the $\alpha$ appearing in our final formula is given by $\alpha= \beta \kappa_T$, and since in our rescaled model the relaxation time of the polymers is $\tau=\beta/\kappa$, we get $\alpha = \k Wi /2$ and
we see that our formula exactly matches the one found in \eqref{stationari_vincenzi}. \\

 The exponent $h= \k/2(1+\alpha)= \k/2(1+ \beta k_T)$ is the crucial parameter governing the so called coil-stretch transition. Indeed the behavior of the stationary distribution, changes somehow abruptly when $h$ crosses the critical value $1$, as observed in \cite{Afonso2005nonlinear}, revealing a sort of phase transition. 
 In application, the ratio between the maximal and the mean elongation $R_0^{-1} = \k$ usually lies between $10$ and $100$, thus the exponent $h$ is small only if the product $\beta k_T$ is at least as large as $\kappa$. In our main result, we are able to prove directly the convergence to the equilibrium system only under the assumptions that $h>1$ and $\beta k_T$ is small enough (`coil' regime), in particular our assumptions do not allow to reach the coil-stretch transition for physical values of $\kappa$ (\textit{c.f.} \autoref{TH1-*_nocutoff}). For this reason, in order to justify the final formula without restrictions on the parameters, we introduce also a cut-off on the system which tames the effect of the noise near the boundary. Performing first the scaling limit and then removing the cut-off, we see that we reach the same stationary distribution as above, thus partially completing the analysis. Avoiding the cut-off outside the `coiled' regime is an open problem that seems out of reach with the current techniques, as it even seems impossible to construct weak solutions to the Fokker-Plank equation \eqref{Ito-FP} (see \autoref{rem:coercivity_fails}). 

\subsubsection*{Structure of the paper} The manuscript is organized as follows: in \autoref{Section1-desciption}, we present some preliminaries. Then, we  formulate the problem,  collect the main  results and describe the strategy of the proof in \autoref{Section-main-results}.  \autoref{Section-exit-uniq} is devoted to the  proof of  existence and uniqueness within the class of \textit{quasi-regular weak solution} to \eqref{Ito-FP_eps-V2}, proving \autoref{TH1-*} which includes also a  uniform estimate with respect to $N$. In  \autoref{Section-diffusion-limit}, we     consider  the diffusion scaling limit    as $N\uparrow +\infty$, where   the  extra term emerges. Next,  \autoref{section-remove-cut-off} concerns  removing the cut-off in the corresponding regime. Finally, \autoref{Sec-singular-limit} contains the proof of the singular limit as $\tau\downarrow 0$ and the identification of the stationary formula in the polymer length.

\section{Notation, definitions and problem formulation}\label{Section1-desciption}
Given a separable Banach space $\mathbf{X}$, since $L^{\infty}(0,T;\mathbf{X})$	is	not	separable,	
we	introduce for convenience	the	following	space:
$$L^2_{w-*}(\Omega;L^\infty(0,T;\mathbf{X}))=\{ 	u:\Omega\to	L^\infty(0,T;\mathbf{X})	\text{	is	 weakly-* measurable		and	}	\E\Vert	u\Vert_{L^\infty(0,T;\mathbf{X})}^2<\infty\},$$
where	weakly-* measurable	stands	for	the	measurability	when	$L^\infty(0,T;\mathbf{X})$	is	endowed	with	the	$\sigma$-algebra	generated	by	the	Borel	sets	of	weak-*	topology, we recall that (see 	\cite[Thm. 8.20.3]{Edwards}) $$ L^2_{w-*}(\Omega ;L^\infty(0, T;\mathbf{X}))\simeq \left(L^2(\Omega ;L^1(0, T;\mathbf{X}^\prime)) \right)^\prime .$$	
We recall also that  $C_w([0, T];\mathbf{Y})$    denotes 	the	Bochner space of weakly continuous functions with values in a Banach space $\mathbf{Y}.$\\
We will be using the following notations:
\begin{itemize}
	\item   $(\nabla_x  g)_{i=1,2}=(\dfrac{\partial g}{\partial  x_i})_{i=1,2}; (\nabla_r  g)_{i=1,2}=(\dfrac{\partial g}{\partial  r_i})_{i=1,2} $ for scalar   function    $g$.
	\item   $(\nabla_x  g)_{i,j=1,2}=(\dfrac{\partial g^i}{\partial  x_j})_{i,j=1,2}; (\nabla_r  g)_{i,j=1,2}=(\dfrac{\partial g^i}{\partial  r_j})_{i,j=1,2} $ for vector  valued   function   $g$.
	\item     $\Delta_xg=\displaystyle\sum_{i=1}^2\dfrac{\partial^2 g}{\partial^2  x_i};  \Delta_rg=\displaystyle\sum_{i=1}^2\dfrac{\partial^2 g}{\partial^2  r_i}$  for scalar   function    $g$.   
\end{itemize}
and     $\Div_{x/r}   g=\nabla_{x/r}    \cdot   g$  for vector  valued   function   $g$.
\subsection{Weighted function spaces}\label{sect-funct-spaces}
Due to the degeneracy of the restoring force $\nabla \U$ on $\partial D$, we will mainly be working with appropriate weighted Sobolev spaces on the domain $D$ that we now introduce. 
Given a bounded function $\mathcal{J}> 0$ a.e. in $D$, set
\begin{align*}
    L^2_\mathcal{J}&=\{g:D \to \mathbb{R}: \quad \Vert g\Vert_{2,\mathcal{J}}^2=\int_D \left |\dfrac{g}{\mathcal{J}}\right |^2 \mathcal{J}dr<+\infty \},\\
        H^1_\mathcal{J}&=\{g:D \to \mathbb{R}: \quad \Vert g\Vert_{H_\mathcal{J}^1}^2=\int_D \left |\dfrac{g}{\mathcal{J}}\right|^2+ \left|\nabla\left(\dfrac{g}{\mathcal{J}}\right)\right|^2 \mathcal{J}dr <+\infty \}.
\end{align*}
Now,  we introduce the following spaces \begin{align*}    
V_\mathcal{J}:= L^2(\mathbb{T}^2;H^1_\mathcal{J}), \quad  H_\mathcal{J}=L^2(\mathbb{T}^2;L^2_\mathcal{J}).
\end{align*}
These spaces are 
equipped with their natural inner product. Namely, we have
\begin{align*}
(h,g)&:= \int_{\mathbb{T}^2\times D} h(x,r) g(x,r)dxdr,\quad	\forall	g,h \in L^2(\mathbb{T}^2\times D),\\
	(h,g)_{H_\mathcal{J}}&:= \int_{\mathbb{T}^2\times D} h(x,r) g(x,r)\dfrac{dr}{\mathcal{J}}dx,\quad	\forall	g,h \in H_{\mathcal{J}},\\
    (h,g)_{V_\mathcal{J}}&:= \int_{\mathbb{T}^2\times D} \hspace{-0.4cm}\nabla\left(\dfrac{h}{\mathcal{J}}\right)\cdot \nabla\left(\dfrac{g}{\mathcal{J}}\right) \mathcal{J}drdx, \qquad  [h]_{V_\mathcal{J}}^2 := \int _{\T^2 \times D}  \left|\nabla \left(\frac{h}{\mathcal{J}}\right)\right|^2\mathcal{J} drdx,  \\
    ((h,g))_{V_\mathcal{J}}&:= (h,g)_{H_\mathcal{J}}+ (h,g)_{V_\mathcal{J}},\quad	\forall	g,h \in V_\mathcal{J}.  
\end{align*} 
Let also $V^\prime_\mathcal{J}$ denote the dual space of $V_\mathcal{J}.$ We have the following isometries and embeddings: $$H_\mathcal{J}\simeq  L^2(\mathbb{T}^2\times D, \dfrac{dr}{\mathcal{J}}dx), \quad V_\mathcal{J} \hookrightarrow H_\mathcal{J} \hookrightarrow V_\mathcal{J}', \quad H_\mathcal{J}\hookrightarrow  L^2(\mathbb{T}^2\times D).$$
It should be noted that $L^2_\mathcal{J}$ and $H^1_\mathcal{J}$ are not classical weighted Sobolev spaces but one  checks easily that   $L^2_\mathcal{J}$ and $H^1_\mathcal{J}$  are Hilbert spaces.\\

We will make use use the following class of weights depending on a parameter $k>0$. 
  \begin{align}\label{weight}
   \mathbf{M}(r):=\dfrac{1}{\mathbf{Z}}(1-\vert r\vert^2)^{\frac{\kappa}{2}}, \qquad \mathbf{Z}= \int_D (1-\vert r\vert^2)^{\frac{\kappa}{2}} dr
\end{align}
Let us recall the following results, which we use in our analysis.
\subsubsection{The operator $\mathcal{L}_k$}\label{subsect-opert-deter} Given that the cases $\kappa\geq 2$ and $0<\kappa<2$ exhibit different properties, we recall some results of \cite[Subsection 3.3.]{Masmoudi2008}.\\

Let $\kappa>0$ and consider the following unbounded operator 
\begin{align}\label{operator-L}
   \mathcal{L}_\kappa(f):= -\Div_r \big(\kappa F(r) rf+	\nabla_r f\big), \quad F(r)=\dfrac{1 }{1-\vert r\vert^2}.
\end{align}
Since the weight $\M$ satisfies $\k \M F(r) + \nabla\M=0 $, the operator $\mathcal{L}_\k$ can be rewritten in the following way
$$\mathcal{L}_\k(f)=-\Div_r\left(\mathbf{M}\nabla_r\left(\dfrac{f}{\mathbf{M}}\right)\right).$$
Thus, we are naturally led to consider, on the space $L^2_\mathbf{M}$, the following domain 
\begin{align}\label{domain-operator-L}
    D(\mathcal{L}_k)=\{ g\in H^1_\mathbf{M} | \quad \mathcal{L}_\kappa g\in L^2_\mathbf{M}| \quad   \mathbf{M}\nabla_r(\dfrac{g}{\mathbf{M}})\cdot \eta\vert_{\partial D}=0\}.
\end{align}
We introduce also the following Hilbert space
\begin{align*}
    \mathcal{H}^2=\{ g\in  H^1_\mathbf{M} | \quad \int_D \left[\Div_r\left(\mathbf{M}\nabla_r\left(\dfrac{g}{\mathbf{M}}\right)\right) \right]^2\dfrac{dr}{\mathbf{M}}<+\infty\}.
\end{align*}
Concerning the boundary condition $\mathbf{M}\nabla_r(\dfrac{g}{\mathbf{M}})\cdot \eta\vert_{\partial D}=0,$ they should be understood in the weak sense \textit{i.e.} 
\begin{align}\label{BC-sense}
\text{ for any } h\in H^1_\mathbf{M}: \quad    \int_D h \mathcal{L}_\kappa g \dfrac{dr}{\mathbf{M}}=\int_D \mathbf{M}\nabla_r(\dfrac{g}{\mathbf{M}})\cdot \nabla_r(\dfrac{h}{\mathbf{M}}) dr.
\end{align}
Next, let us recall the following remark (see \cite[Remark 3.8]{Masmoudi2008}), which clarify  a key difference  between the cases $\kappa\geq 2$ and $0<\kappa<2.$
\begin{remark}\label{rmq-BC-cases}
     If $\kappa\geq 2,$ the boundary condition $\mathbf{M}\nabla_r(\dfrac{g}{\mathbf{M}})\cdot \eta\vert_{\partial D}=0$ is a consequence of the fact that $g\in \mathcal{H}^2$ and hence $D(\mathcal{L}_\kappa)=\mathcal{H}^2.$ The fact that $\mathbf{M}\nabla_r(\dfrac{g}{\mathbf{M}})\cdot \eta\vert_{\partial D}=0$  is not needed in this case should be related to a similar property of the SDE \eqref{FENE}. Namely, the process $\widetilde{R}_t$ does not reach the boundary $\partial D$ in finite time almost surly, see \cite{Jourdain2003mathematical,Jourdain2004existence}.
         In the case  $0<\kappa<2,$ the process $\widetilde{R}_t$  in \eqref{FENE} reaches the boundary $\partial D$ in finite time almost surely and this explains why the boundary condition for the operator $\mathcal{L}_\kappa$ is needed. Moreover, the inclusion $D(\mathcal{L}_\kappa) \subset \mathcal{H}^2$ is strict in this case.
\end{remark}

We now recall the following result from \cite[Proposition 3.6]{Masmoudi2008}, which we use in the construction of the solution to \eqref{Ito-FP} \textit{via} the Galerkin approximation.
\begin{proposition}\label{propo-construct-base}
$\mathcal{L}_\kappa$ is self-adjoint and positive. Moreover, it has a discrete spectrum formed by a sequence $(l_i)_{i\in \mathbb{N}}$ such that $l_i\uparrow +\infty$ when $i\uparrow +\infty$.
\end{proposition}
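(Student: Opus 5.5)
The plan is to realize $\mathcal{L}_\kappa$ as the operator associated with a closed, symmetric, nonnegative bilinear form on $L^2_\mathbf{M}$, and then get discreteness of the spectrum from a compact embedding $H^1_\mathbf{M}\hookrightarrow L^2_\mathbf{M}$. Work in the variable $\psi=g/\mathbf{M}$. Then $L^2_\mathbf{M}$ is isometric to $L^2(D;\mathbf{M}\,dr)$ and $H^1_\mathbf{M}$ to the weighted Sobolev space $H^1(D;\mathbf{M}\,dr)$.

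First I define the form
\[
a(g,h)=\int_D \mathbf{M}\nabla_r\Big(\frac{g}{\mathbf{M}}\Big)\cdot\nabla_r\Big(\frac{h}{\mathbf{M}}\Big)dr
\]
on the form domain $H^1_\mathbf{M}$. It is symmetric and satisfies $a(g,g)\ge 0$. It is closed, since $a(g,g)+\|g\|^2_{2,\mathbf{M}}=\|g\|^2_{H^1_\mathbf{M}}$ and $H^1_\mathbf{M}$ is a Hilbert space. For density of $H^1_\mathbf{M}$ in $L^2_\mathbf{M}$, I use that $\mathbf{M}\,C_c^\infty(D)\subset H^1_\mathbf{M}$ is dense in $L^2_\mathbf{M}$, because $C_c^\infty(D)$ is dense in $L^2(D;\mathbf{M}dr)$.

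Next, Kato's representation theorem (Friedrichs extension) gives a unique self-adjoint operator $T\ge0$ with $(Tg,h)_{L^2_\mathbf{M}}=a(g,h)$. Its domain consists of the $g\in H^1_\mathbf{M}$ for which $h\mapsto a(g,h)$ is $L^2_\mathbf{M}$-continuous. Testing with $h=\mathbf{M}\varphi$, $\varphi\in C_c^\infty(D)$, shows that $Tg=-\Div_r(\mathbf{M}\nabla_r(g/\mathbf{M}))=\mathcal{L}_\kappa g$ in the distributional sense. Moreover the identity $(Tg,h)=a(g,h)$ for all $h\in H^1_\mathbf{M}$ is exactly the weak no-flux condition \eqref{BC-sense}. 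Hence $T=\mathcal{L}_\kappa$ with domain \eqref{domain-operator-L}, and self-adjointness and positivity follow.

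For the discrete spectrum, it suffices to show that $(I+\mathcal{L}_\kappa)^{-1}$ is compact on $L^2_\mathbf{M}$. Equivalently, the embedding $H^1(D;\mathbf{M}dr)\hookrightarrow L^2(D;\mathbf{M}dr)$ must be compact, with $\mathbf{M}\sim(1-|r|^2)^{\kappa/2}$. Take a bounded sequence $\psi_n$ in $H^1(D;\mathbf{M}dr)$.
\begin{itemize}
\item On $B(0,1-\delta)$ the weight is bounded above and below, so Rellich gives a subsequence converging in $L^2(B(0,1-\delta))$.
\item To control the boundary layer, I prove a weighted Hardy-type inequality in the distance $s=1-|r|$:
\[
\int_{1-\delta<|r|<1}|\psi|^2 s^{\kappa/2}\,dr\le C\,\delta^{\gamma}\|\psi\|^2_{H^1(\mathbf{M})},\qquad \gamma>0 .
\]
I obtain it by integrating along radii, writing $\psi(s)=\psi(s_0)-\int_s^{s_0}\partial_s\psi$ with $s_0\in(\delta,2\delta)$, and applying Cauchy--Schwarz with the weight $s^{\kappa/2}$. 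When $\kappa/2\ge1$ the integral $\int_s^{s_0} t^{-\kappa/2}\,dt$ diverges, and one uses instead the Hardy inequality $\int \psi^2 s^{\kappa/2}\le C\int(\psi^2+|\psi'|^2)s^{\kappa/2+2}$ away from the critical exponent.
\end{itemize}
A diagonal argument in $\delta$ then yields a Cauchy subsequence in $L^2(\mathbf{M})$.

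Compactness of the resolvent gives a sequence of eigenvalues $l_i\ge0$ of finite multiplicity with $l_i\uparrow+\infty$ (here $l_0=0$, with eigenfunction $\mathbf{M}$). The main obstacle I expect is the uniform tail estimate near $\partial D$ for every $\kappa>0$, in particular handling both regimes $\kappa<2$ and $\kappa\ge 2$ as in \autoref{rmq-BC-cases}.
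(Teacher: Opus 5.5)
Your functional-analytic framework is correct. It consists of the form $a$ on $H^1_\mathbf{M}$, its closedness, the density of $\mathbf{M}C_c^\infty(D)$, the identification of the Friedrichs operator with $\mathcal{L}_\kappa$ on \eqref{domain-operator-L} through \eqref{BC-sense}, and the reduction of discreteness to compactness of $H^1(D;\mathbf{M}dr)\hookrightarrow L^2(D;\mathbf{M}dr)$. This is the standard route behind \cite[Proposition 3.6]{Masmoudi2008}, which the paper quotes without proof.

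The gap is in the only nontrivial step, the boundary-layer bound for $\kappa\ge 2$. The inequality $\int\psi^2 s^{\kappa/2}\le C\int(\psi^2+|\psi'|^2)s^{\kappa/2+2}$ is true, but used as you state it, it carries no gain. Since $s^{\kappa/2+2}\le s^{\kappa/2}$, its right-hand side is dominated by $\|\psi\|^2_{H^1(\mathbf{M})}$, so it only re-proves the trivial bound $\|\psi\|_{L^2(\mathbf{M})}\lesssim\|\psi\|_{H^1(\mathbf{M})}$. A shift of exactly two powers of $s$ is the endpoint at which nothing is gained, so no factor $\delta^\gamma$ comes out. A smaller point concerns the radial argument for $\kappa<2$. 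There, choosing $s_0\in(\delta,2\delta)$ only bounds the mass on $\{s<\delta\}$ by $C$ times the mass on $\{\delta<s<2\delta\}$, plus $C\delta^2\|\psi\|^2_{H^1(\mathbf{M})}$. This still suffices if you apply it to $\psi_n-\psi_m$, since the annulus lies where Rellich already gives convergence. Alternatively, taking $s_0$ at a fixed distance from $\partial D$ gives the bound $C\delta^{1+\kappa/2}\|\psi\|^2_{H^1(\mathbf{M})}$.

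The fix is to put the Hardy exponent strictly inside the admissible window. For every $b>1$ one has, along each ray,
\[
\int_0^{1/2}\psi^2 s^{b-2}\,ds\le C_b\int_0^{1/2}(\psi^2+|\psi'|^2)s^{b}\,ds .
\]
To prove it, cut off away from $\partial D$ and integrate by parts on $(\epsilon,1/2)$; the boundary term at $\epsilon$ has the good sign. Choose $b\in(\max\{1,\kappa/2\},\kappa/2+2)$, which is always a nonempty interval. Using $s^{b}\le s^{\kappa/2}$ and then integrating over angles,
\[
\int_{s<\delta}\psi^2 s^{\kappa/2}\,ds\le \delta^{\kappa/2+2-b}\int_0^{1/2}\psi^2 s^{b-2}\,ds\le C\,\delta^{\kappa/2+2-b}\,\|\psi\|^2_{H^1(\mathbf{M})} .
\]
This holds for all $\kappa>0$ at once, including the critical case $\kappa=2$, so the case distinction disappears. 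For $\kappa>2$ the choice $b=\kappa/2$ is exactly the Hardy inequality \eqref{Hardy-ineq} of \autoref{lemma-properties-weight}, giving $\gamma=2$. With this replacement the rest of your argument (Rellich on $B(0,1-\delta)$, diagonal extraction, compact resolvent) goes through.
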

Consequently, the corresponding eigenvectors $(e_i)_{i\in \mathbb{N}}$ satisfying $\mathcal{L}_\kappa e_n=l_ne_n,\  n\in \mathbb{N}$ form  a countable orthonormal basis for the space $L^2_\mathbf{M}.$ Moreover, setting $\lambda_i=l_i+1$, we notice that
    \begin{align}\label{basis-eigen}
      (u,e_i)_{H^1_\mathbf{M}}=\lambda_i(u,e_i)_{L^2_\mathbf{M}}, \quad \forall u\in H^1_\mathbf{M}, \ \forall i\in \mathbb{N}.
    \end{align}
  Therefore,  $\{\tilde{e}_i=e_i/\sqrt{\lambda_i}\}_{i\in \mathbb{N}}$ is  an orthonormal basis of $H^1_\mathbf{M}.$ 
To avoid cluttering the notation, we use the following abbreviated notation:
\begin{align*}    
V:= L^2(\mathbb{T}^2;H^1_{\mathbf{M}})\hookrightarrow H=L^2(\mathbb{T}^2;L^2_\mathbf{M})\simeq  L^2(\mathbb{T}^2\times D, \dfrac{dr}{\mathbf{M}}dx) \hookrightarrow  V^\prime \text{ and } H\hookrightarrow  L^2(\mathbb{T}^2\times D).
\end{align*}

\subsubsection{The operator $\mathcal{L}_{\kappa,\gamma}$}\label{sec:mod_operator}
The presence of the noise will alter the structure of the deterministic equation due to the introduction of the Stratonovich corrector. It will be convenient then to introduce a corrected weight that take into account the effect of the noise. Let $\kappa,\gamma>0$ and consider the modified weight
\begin{align}\label{weight-limit}
     \mathbf{M}_0(r)= \dfrac{1}{\mathbf{Z}_0} \left(
\dfrac{1 - \vert r\vert^{2}}{1 + \gamma \vert r\vert^{2}}
\right)^{\frac{\kappa}{2\left(1 + \gamma\right)}}, \qquad  \mathbf{Z}_0=\int_D \left(
\dfrac{1 - \vert r\vert^{2}}{1 + \gamma \vert r\vert^{2}}
\right)^{\frac{\kappa}{2\left(1 + \gamma\right)}} dr.
\end{align}
It is easy to check that 
\begin{equation}\label{elliptic-eq-M0}
    \k rF(r)\M_0 + \nabla_r \M_0+ \gamma A(r)\nabla_r \M_0 =0.
\end{equation}
Thus we naturally introduce the operator
\begin{align}\label{operator-limit}
 \mathcal{L}_{\kappa,\gamma}(g)=-\Div_r\left(\mathbf{M}_0 (I+\gamma\mathcal{A}(r))\nabla_r(\dfrac{g}{\mathbf{M}_0})\right),    \quad  \mathcal{A}(r)= (3 \left\vert r\right\vert^2I-2r\otimes r),
\end{align}
 which is  defined on  $L^2_{\mathbf{M}_0}$ with domain $$D(\mathcal{L}_{\kappa,\gamma})=\{ g\in H^1_{\mathbf{M}_0};\quad \mathcal{L}(g)\in L^2_{\mathbf{M}_0} \text{ and }  \mathbf{M}_0 (I+\mathcal{A}(r))\nabla_r(\dfrac{g}{\mathbf{M}_0})\cdot \eta\vert_{\partial D}=0\}.$$ Similarly to \cite[Prop. 3.6.]{Masmoudi2008}, one can check that $-\mathcal{L}_{\kappa,\gamma}$ is self adjoint and positive  on $L^2_{\mathbf{M}_0}$.
Concerning the boundary condition $\mathbf{M}_0 (I+\mathcal{A}(r))\nabla_r(\dfrac{g}{\mathbf{M}_0})\cdot \eta\vert_{\partial D}=0,$ they should be understood in the natural weak sense  
\begin{align}\label{BC-sense-limit-case}
\text{ for any } h\in H^1_{\mathbf{M}_0}: \quad    \int_D h \mathcal{L}_{\kappa,\gamma} g \dfrac{dr}{\mathbf{M}_0}=\int_D \mathbf{M}_0(I+\mathcal{A}(r))\nabla_r(\dfrac{g}{\mathbf{M}_0})\cdot \nabla_r(\dfrac{h}{\mathbf{M}_0}) dr.
\end{align}
Now, notice that  there exists $C_{\kappa,\gamma}>0$  such that
\begin{align*}
  C_{\kappa,\gamma}(1 - \vert r\vert^{2})^{\frac{\kappa}{2\left(1 + \gamma\right)}} \leq   \left(
\dfrac{1 - \vert r\vert^{2}}{1 + \gamma \vert r\vert^{2}}
\right)^{\frac{\kappa}{2\left(1 + \gamma\right)}}
\leq (1 - \vert r\vert^{2})^{\frac{\kappa}{2\left(1 + \gamma\right)}}.
\end{align*}
Therefore the same conclusion of \autoref{rmq-BC-cases} holds. In particular,  introducing 
\begin{align*}
   \widetilde{\mathcal{H}}^2=\{ g\in  H^1_{\mathbf{M}_0} | \quad \int_D \big(\Div_r\bigl(\mathbf{M}_0 (I+\mathcal{A}(r))\nabla_r(\dfrac{g}{\mathbf{M}_0})\bigr))^2\dfrac{dr}{\mathbf{M}_0}<+\infty\},
\end{align*}

we have $D(\mathcal{L}_\kappa)=   \widetilde{\mathcal{H}}^2$ if $ \kappa\geq 2(1+\gamma)$ and  the strict inclusion $D(\mathcal{L}_{\kappa,\gamma})\subset    \widetilde{\mathcal{H}}^2$ if $ 0<\kappa< 2(1+\gamma)$.

The corresponding Lions Guelfand triple reads
\begin{align}\label{spaces-2}    
V_0:= L^2(\mathbb{T}^2;H^1_{\mathbf{M}_0})\hookrightarrow H_0=L^2(\mathbb{T}^2;L^2_{\mathbf{M}_0})\simeq  L^2(\mathbb{T}^2\times D, \dfrac{dr}{\mathbf{M}_0}dx) \hookrightarrow  V_0^\prime \text{ and } H\hookrightarrow  H_0.
\end{align}
\subsubsection{Weighted spaces, density and  trace}
Let us introduce the weighted Sobolev space $$\mathcal{H}^1_{\mathcal{J}}=\{g: \quad \int_D(\vert g\vert^2+\vert \nabla g\vert^2)\mathcal{J} dr<\infty\}.$$ We recall that  the following isometry $H^1_\mathcal{J} \to  \mathcal{H}^1_\mathcal{J}, \quad g\mapsto \dfrac{g}{\mathcal{J}}$ holds between the Hilbert spaces $H^1_\mathcal{J}$ and  $\mathcal{H}^1_\mathcal{J}.$
Let $\mathcal{J} \overset{|R| \to 1}{\sim} (1-|R|)^k, k>0$. We recall the following results.
\begin{itemize}
    \item  For any $k>0,$ we have $    \overline{C^1(\overline{D})}^{\mathcal{H}^1_\mathcal{J}}=\mathcal{H}^1_{\mathcal{J}}$. If $k\geq 1$, then     $\overline{C^\infty_c(D)}^{H^1_\mathcal{J}}=H^1_{\mathcal{J}}$, where $C^\infty_c(D)$ denotes the class of smooth functions with compact support in $D$,  see \cite[Prop. B.2]{Masmoudi2013}.
    \item If  $0<k<1,$ then  $\overline{C^\infty_c(D)}^{H^1_\mathcal{J}}\neq H^1_{\mathcal{J}}.$ In fact, functions of $\mathcal{H}^1_\mathcal{J}$ have a trace on the boundary $\partial D$. Namely the trace map $\gamma:\mathcal{H}^1_\mathcal{J} \to H^{\frac{1-k}{2}}(\partial D) $ is surjective.
\end{itemize}

Finally, we recall some useful technical lemmas regarding our weighted spaces (see \cite[Subsection 2.3.]{Chupin2010fokker} and \cite[Remark 3.7]{Masmoudi2008}).
\begin{lemma} \label{lemma-properties-weight}
Let $\kappa> 2$ and $d_{\partial D}(x) := 1-|x|^2$ denotes the distance from $\partial D$.  We have:
    \begin{enumerate}
       \item For any $\varphi\in H^1_\mathbf{M}, $ we have the following Hardy-type inequality, there exists a constant $C_H$ such that
     \begin{align}\label{Hardy-ineq}
          \int_D \dfrac{1}{d_{\partial D}^2}\left\vert\dfrac{\varphi}{\mathbf{M}}\right\vert^2 \mathbf{M}dr \le C_H \Vert \varphi\Vert^2_{H^1_\mathbf{M}}.
       \end{align}
        \item The following inclusions hold: $L^2_\mathbf{M} \subset L^2(D)$ and $H^1_\mathbf{M}\subset H^1_0(D).$
     \end{enumerate}
\end{lemma}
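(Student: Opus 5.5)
The natural route is to reduce both statements to one-dimensional Hardy inequalities in the radial variable near $\partial D$, since away from the boundary $\mathbf{M}$ is bounded above and below and everything is trivial. Set $\psi=\varphi/\mathbf{M}$. By the isometry $H^1_\mathbf{M}\simeq\mathcal{H}^1_\mathbf{M}$, the claim (1) reads
\[
\int_D \frac{|\psi|^2}{d_{\partial D}^2}\,\mathbf{M}\,dr\le C_H\int_D (|\psi|^2+|\nabla\psi|^2)\mathbf{M}\,dr,
\]
and $\mathbf{M}\sim d_{\partial D}^{\kappa/2}$ with $d_{\partial D}=1-|r|^2\sim 2(1-|r|)$ near the boundary. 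By the density result recalled above ($\kappa/2>1$, so $C^1(\overline D)$, and even $C_c^\infty(D)$, is dense), it suffices to prove the inequality for smooth $\psi$ and pass to the limit by Fatou.

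For (1), I introduce a cutoff $\chi$ equal to $1$ on $\{|r|>3/4\}$ and vanishing on $\{|r|<1/2\}$. The part $(1-\chi)\psi$ is trivially controlled by $\|\psi\|_{L^2(\mathbf{M})}$. For $\chi\psi$, I write it in polar coordinates $r=s\omega$ with $t=1-s\in(0,1/2)$, and for each fixed $\omega$ I apply the weighted 1D Hardy inequality
\[
\int_0^{1/2} t^{a-2}|g(t)|^2dt\le C\int_0^{1/2}t^{a}|g'(t)|^2dt, \qquad a=\kappa/2>1,
\]
for $g$ vanishing at $t=1/2$. This follows from integrating $\frac{d}{dt}(t^{a-1})|g|^2$ by parts and applying Cauchy--Schwarz. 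The boundary term at $t=0$ vanishes because $t^{a-1}|g|^2\to0$ for smooth $g$ when $a>1$, and the constant is $4/(a-1)^2$. The Jacobian $s\in(1/2,1)$ is harmless. Since $\nabla(\chi\psi)$ is bounded by $|\nabla\psi|+C|\psi|$, integrating over $\omega$ gives (1). The only delicate point is exactly the sign/boundary term, which is where $\kappa>2$ is used.

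For (2), $L^2_\mathbf{M}\subset L^2(D)$ follows from $\int|\varphi|^2dr=\int|\varphi/\mathbf{M}|^2\mathbf{M}\cdot\mathbf{M}\,dr\le\|\mathbf{M}\|_\infty\|\varphi\|^2_{L^2_\mathbf{M}}$. For $H^1_\mathbf{M}\subset H^1_0(D)$, I write $\nabla\varphi=\mathbf{M}\nabla\psi+\psi\nabla\mathbf{M}$ with $|\nabla\mathbf{M}|\le C\mathbf{M}/d_{\partial D}$. The first term is bounded in $L^2$ by $\|\mathbf{M}\|_\infty^{1/2}\|\nabla\psi\|_{L^2(\mathbf{M})}$. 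The second term is bounded by $C\int \frac{|\psi|^2}{d_{\partial D}^2}\mathbf{M}^2 dr\le C\|\mathbf{M}\|_\infty\int\frac{|\psi|^2}{d^2_{\partial D}}\mathbf{M}\,dr$, which is controlled by (1). Thus $\|\varphi\|_{H^1(D)}\le C\|\varphi\|_{H^1_\mathbf{M}}$. Since $C_c^\infty(D)$ is dense in $H^1_\mathbf{M}$ for $\kappa/2\ge1$, and this continuous embedding maps it into $H^1_0(D)$, which is closed, the inclusion in $H^1_0(D)$ follows.

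The main obstacle is the justification of the Hardy step, namely the vanishing boundary term and the passage from smooth functions to general ones. Taking $C_c^\infty(D)$ density from the recalled \cite[Prop. B.2]{Masmoudi2013} disposes of both.
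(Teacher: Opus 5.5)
Your proof is correct. The paper gives no argument of its own for this lemma; it quotes it from \cite[Subsection 2.3]{Chupin2010fokker} and \cite[Remark 3.7]{Masmoudi2008}. What you wrote is the standard self-contained proof behind those references:
\begin{itemize}
\item localize near $\partial D$ and apply the one-dimensional weighted Hardy inequality in $t=1-|r|$;
\item use $a=\kappa/2>1$, which is exactly what makes $\int_0 t^{a-2}\,dt$ finite and kills the boundary term $t^{a-1}|g|^2$ at $t=0$;
\item pass to general $\varphi$ by density and Fatou;
\item deduce (2) from the product rule, the bound $|\nabla_r\mathbf{M}|\le \kappa\mathbf{M}/d_{\partial D}$, part (1), density of $C_c^\infty(D)$ in $H^1_\mathbf{M}$, and closedness of $H^1_0(D)$ in $H^1(D)$.
\end{itemize}

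What your version adds over the citation is an explicit constant, of order $(a-1)^{-2}$ up to factors that stay bounded for $a$ in compact sets. This makes the parameter-uniformity claimed in the remark right after the lemma checkable, and it pins down the correct range. $C_H$ is uniform for $a$ in compact subsets of $(1,+\infty)$, but not of $(0,+\infty)$ as that remark states. Indeed, testing (1) with $\varphi=\mathbf{M}$ (i.e.\ $\psi\equiv 1$) gives $C_H\ge (a+1)/(a-1)$ for $D\subset\mathbb{R}^2$, and the inequality fails outright for $a\le 1$. This is consistent with how the bound is actually used in the coercivity lemma, where the relevant exponent is assumed to exceed $1$.
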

\begin{remark}\label{rem:hardy-constant}
    By inspecting the proof of \cite[Lemma 2.3]{Chupin2010fokker}, it turns out that for $\M \propto d_{\partial D}^\alpha$, the Hardy constant $C_H(\alpha)$, is uniformly bounded for $\alpha$ varying in a compact sets of $(0, +\infty)$, thus we will not specify this dependence from now on. 
\end{remark}
\begin{remark}
It is worth recalling that similar result of \autoref{lemma-properties-weight} holds with the weight $\mathbf{M}_0$ instead of $\mathbf{M}$ if $\kappa> 2(1+\gamma).$ 
\end{remark} 
\subsection{Definition of the noise coefficients}\label{assumption_noise-2D}
Let $(\Omega,\mathcal{F},(\mathcal{F}_t)_t,P)$  be  a   complete   filtered probability space and let us also consider a family $(W_t^k)_t^{k\in    \mathbb{Z}^2}$ of independent Brownian motions on the probability space $(\Omega,\mathcal{F},P)$ and a finite subset of indices $K \subset \mathbb{Z}^2$.  We set $$u_s(x,t)=\sum_{k\in K}%
\sigma_{k}^{N,\tau}\left(  x\right)  dW_{t}^{k}.$$

We now define the noise coefficients $\sigma_{k}^{N,\tau}$: they will depend upon two parameters, $\tau$, the dominant time-scale of the fluid flow and $N^{-1}$ its dominant spatial scale. 
Consider $\mathbb{Z}_{0}^{2}:=\mathbb{Z}^2-\{(0,0)\}$ divided into its four quadrants (write
$k=\left(  k_{1},k_{2}\right)  $)%
\begin{align*}
K_{++}  & =\left\{  k\in\mathbb{Z}_{0}^{2}:k_{1}\geq0,k_{2}>0\right\};\quad
K_{-+}   =\left\{  k\in\mathbb{Z}_{0}^{2}:k_{1}<0,k_{2}\geq0\right\}  \\
K_{--}  & =\left\{  k\in\mathbb{Z}_{0}^{2}:k_{1}\leq0,k_{2}<0\right\} ; \quad
K_{+-}   =\left\{  k\in\mathbb{Z}_{0}^{2}:k_{1}>0,k_{2}\leq0\right\}
\end{align*}
and set $  K_{+}   =K_{++}\cup K_{+-};  \quad 
K_{-}   =K_{-+}\cup K_{--}  \text{  and }   K=K_{+}\cup K_{-}.$
    Define \footnote{For $y=(y_1,y_2)\in   \R^2,$  $y^\perp$   stands  for $(-y_2,y_1).$}
\begin{align}\label{Def-sigma-k}
\sigma_{k}^{N,\tau}\left(  x\right)     =\theta_{k}^{N,\tau}\frac{k^{\perp}}{\left\vert
k\right\vert }\cos k\cdot x,\qquad k\in K_{+},  \quad 
\sigma_{k}^{N,\tau}\left(  x\right)     =\theta_{k}^{N,\tau}\frac{k^{\perp}}{\left\vert
k\right\vert }\sin k\cdot x,\qquad k\in K_{-}%
\end{align}
where
\begin{align}\label{scaling}
\theta^{N,\tau}_{k}&=\dfrac{a_\tau}{\left\vert k\right\vert^2 },  \qquad   N \leq \left\vert k\right\vert \leq  2N,\qquad N\in \mathbb{N}^* ; \quad
\theta^{N,\tau}_{k}=0 \qquad \text{elsewhere.}
\end{align}
where $a_\tau=\sqrt{\dfrac{\lambda}{\tau}\frac{8}{\pi \log(2)}}$ with $\lambda$ positive constant measuring the intensity of the turbulence, see  \textit{e.g.} \cite[Subsection 2.3.1]{tahraoui2025small}. In the following, we use
 the next result from \cite[Lemma 3]{Flandoli-Tahraoui}.
 \begin{lemma}\label[lemma]{lem-matrix-r} The following equalities    hold.
		\begin{align*}
			\dfrac{1}{2}	\operatorname{div}_x\left(\sum_{k\in K}(\sigma_k^N \otimes \sigma_k^N) \nabla_xf\right)&=
			\alpha_N	\Delta_xf
		\end{align*}
      where $\alpha_N:=\dfrac{1}{2}\sum_{k\in K_{++}}(\theta^{N,\tau}_k)^2$ and $\alpha_N \rightarrow 0$ as $N\rightarrow \infty$. Moreover, 
        \begin{align}\label{limit-A-N}
		A^N(r):=	\sum_{k\in K}\left((\nabla_x \sigma_k^Nr) \otimes (\nabla_x \sigma_k^Nr)\right) &= A(r)+\dfrac{1}{\tau}O(N^{-1})P^N(r),	\end{align}
		such that   $P^N$  is quadratic in $r$ and with coefficients uniformly bounded in $N$ and for $ r\in \mathbb{R}^2$
        \begin{align}\label{matrix-limit}
        A(r)
		=k_T (3 \left\vert r\right\vert^2I-2r\otimes r) , \qquad
		k_T=\dfrac{\lambda}{\tau}.      
        \end{align}
	\end{lemma}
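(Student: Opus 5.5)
The plan is to compute both covariance sums by pairing the cosine and sine modes, using the rotational (mirror) symmetry of the index set to make the angular dependence isotropic, and then to evaluate the remaining lattice sum as a Riemann sum over an annulus. The first identity is exact; the second is an exact symmetry reduction followed by a lattice-sum asymptotic with an $O(N^{-1})$ error.

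\emph{Transport part.} The map $k\mapsto -k$ is a bijection $K_+\to K_-$, and $\theta^{N,\tau}_k=\theta^{N,\tau}_{-k}$. Hence for each $k\in K_+$, using $\cos^2+\sin^2=1$ and $(-k)^\perp\otimes(-k)^\perp=k^\perp\otimes k^\perp$,
\begin{align*}
\sigma_k^{N,\tau}\otimes\sigma_k^{N,\tau}+\sigma_{-k}^{N,\tau}\otimes\sigma_{-k}^{N,\tau}=(\theta^{N,\tau}_k)^2\,\frac{k^\perp\otimes k^\perp}{|k|^2}.
\end{align*}
This is independent of $x$. The rotation by $\pi/2$ maps $K_{++}$ bijectively onto $K_{+-}$ up to sign (which is harmless after pairing) and preserves $|k|$. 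Summing $k^\perp\otimes k^\perp$ over a quadrant together with its rotated copy therefore gives a matrix commuting with rotation by $\pi/2$ and symmetric under the reflection $k_1\leftrightarrow k_2$, hence a multiple $cI$. Taking traces gives $2c=\sum_{K_+}(\theta^{N,\tau}_k)^2=2\sum_{K_{++}}(\theta^{N,\tau}_k)^2$. Since the covariance is constant in $x$, the divergence passes through it and the first identity follows, with constant $\alpha_N$. Finally,
\begin{align*}
\alpha_N\lesssim a_\tau^2\sum_{N\le|k|\le2N}|k|^{-4}\lesssim a_\tau^2N^{-2}\to0 .
\end{align*}

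\emph{Stretching part.} We have $\nabla_x\sigma_k^{N,\tau}r=\mp\theta^{N,\tau}_k\frac{k^\perp}{|k|}(k\cdot r)$ multiplied by $\sin k\cdot x$ or $\cos k\cdot x$. The same pairing gives
\begin{align*}
A^N(r)=a_\tau^2\sum_{k\in K_+,\,N\le|k|\le 2N}|k|^{-2}\,g_r(\hat k),\qquad g_r(e)=(e\cdot r)^2\,e^\perp\otimes e^\perp ,
\end{align*}
where $\hat k=k/|k|$. Since $g_r$ is even in $e$, the sum over $K_+$ is half the sum over the full annulus. Comparing the full sum with the integral in polar coordinates,
\begin{align*}
\int_{N\le|y|\le2N}|y|^{-2}g_r(\hat y)\,dy=\log 2\int_0^{2\pi}g_r(e_\theta)\,d\theta .
\end{align*}
With $e^\perp\otimes e^\perp=I-e\otimes e$ and the isotropic moments $\int_0^{2\pi}e_ie_je_ke_l\,d\theta=\frac{\pi}{4}(\delta_{ij}\delta_{kl}+\delta_{ik}\delta_{jl}+\delta_{il}\delta_{jk})$, this integral equals $\frac{\pi}{4}(3|r|^2I-2r\otimes r)$. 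Multiplying by $a_\tau^2\cdot\frac12\log 2=\frac{\lambda}{\tau}\frac{4}{\pi}$ gives exactly $k_T(3|r|^2I-2r\otimes r)$ with $k_T=\lambda/\tau$.

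\emph{Main obstacle: the Riemann-sum error.} The $O(N^{-1})$ remainder, uniformly quadratic in $r$, is the step I expect to be hardest. Write $g_r(e)=\sum_{i,j}r_ir_j\,G_{ij}(e)$ with $G_{ij}$ matrix-valued, smooth and bounded on the circle; this gives the quadratic structure $P^N(r)$, and it remains to bound the coefficients. For each coefficient I would compare the term at $k$ with the integral over the unit square centred at $k$. The integrand $|y|^{-2}G_{ij}(\hat y)$ has gradient $O(|y|^{-3})$, so each square contributes an error $O(N^{-3})$; over the $O(N^2)$ lattice points this totals $O(N^{-1})$. The squares do not tile the annulus exactly: the mismatch is a boundary layer of width $O(1)$ near $|y|=N$ and $|y|=2N$, of area $O(N)$, where the integrand is $O(N^{-2})$, again giving $O(N^{-1})$. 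Multiplying by $a_\tau^2\propto\tau^{-1}$ gives the claimed $\frac1\tau O(N^{-1})P^N(r)$ with $P^N$ quadratic in $r$ and coefficients bounded uniformly in $N$.
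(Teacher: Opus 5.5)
Your proposal is correct. The $\pm k$ pairing, the isotropy argument (commutation with the rotation by $\pi/2$ already forces a symmetric $2\times2$ matrix to be scalar, so the reflection symmetry is not needed), the angular moments giving $\frac{\pi}{4}(3|r|^2I-2r\otimes r)$ with overall constant $\lambda/\tau$, and the $O(N^{-1})$ lattice-sum-versus-integral comparison all check out. The paper does not reprove this lemma but imports it from \cite[Lemma 3]{Flandoli-Tahraoui}; the constant $a_\tau^2=\frac{\lambda}{\tau}\frac{8}{\pi\log 2}$ is tuned to the same computation you carry out (the $\log 2$ from the radial integral and $\pi$ from the angular moments), so your route appears to be the standard one.
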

      \begin{remark}\label{rmq-bound_A^N}
            For every $N>1$ due to the choice of the coefficients $\sigma_k^N$ (see \eqref{Def-sigma-k}), there exists a constant $C_A$ independent of $N, r\in B(0, 1)$ and $x\in \T^2$ such that for every $v\in \R^2$ it holds
            $$|A^N(r)v| \le C_A|v|.$$
              \end{remark}

\section{Main results}\label{Section-main-results} 
As mentioned in the introduction, due to technical reasons, in order to treat the system with any choice of parameters $\beta, \kappa, \alpha$, we introduce a cut-off on the noise. We will see that this cut-off plays no crucial role in a special range of parameters (\textit{c.f} \autoref{TH1-*_nocutoff}). For $\varepsilon>0$, let $\varphi_\varepsilon\in C^\infty(D)$ such that
\[\exists C>0: \vert \nabla \varphi_\varepsilon \vert \leq \dfrac{C}{\varepsilon}, \quad
0\le\varphi_\varepsilon\le 1,
\qquad
\varphi_\varepsilon(r)=
\begin{cases}
1, & 1-\vert r\vert\ge 2\varepsilon,\\
0, & 1-\vert r\vert\le \varepsilon.
\end{cases}
\]  Let $\varepsilon>0$ and  notice that  $\varphi_\varepsilon \to 1.$ 
Now,  we consider the following problem
\begin{align}\label{Ito-FP_eps-V2}
		\begin{cases}&df^{\varepsilon}_N=	\dfrac{1}{\beta}\Div_r \big(\k F(r) rf^{\varepsilon}_N+	\nabla_rf^{\varepsilon}_N\big) +\dfrac{1}{2}\Div_r \big( \varphi_\varepsilon^2(r) A^N(r) \nabla_rf^{\varepsilon}_N\big)dt +\alpha_N\Delta_x f^{\varepsilon}_Ndt\\[0.15cm]
            &-\sum_{k\in K}\sigma_k^N.\nabla_xf^{\varepsilon}_NdW^k -\sum_{k\in K}\varphi_\varepsilon(r)(\nabla_x \sigma_k^Nr).\nabla_rf^{\varepsilon}_N dW^k  ;\quad (x,r)\in \mathbb{T}^2\times D,\\[0.15cm]
            & \left( \kappa F(r)r f^{\varepsilon}_N + \nabla_r f^{\varepsilon}_N \right)\cdot \eta \vert_{\partial D\times \mathbb{T}^2} = 0,
            \\[0.15cm]
            			&f^{\varepsilon}_N|_{t=0}=f_0.
	\end{cases}\end{align}
   \begin{remark}
      We wish to draw the reader’s attention to the fact that  the boundary condition reads 
    \begin{align*}
        \left( \kappa F(r)r f^{\varepsilon}_N + \nabla_r f^{\varepsilon}_N \right)\cdot \eta \vert_{\partial D\times \mathbb{T}^2}=\left( \kappa F(r)r f^{\varepsilon}_N + \nabla_r f^{\varepsilon}_N+\varphi_\varepsilon^2(r) A^N(r) \nabla_rf^{\varepsilon}_N \right)\cdot \eta \vert_{\partial D\times \mathbb{T}^2}=0,
    \end{align*}
    since $\varphi_\varepsilon\equiv 0$ on $\partial D.$  \end{remark}
\begin{remark}
    In the following \autoref{Section-exit-uniq}, \autoref{Section-diffusion-limit} and \autoref{section-remove-cut-off}, it is clear that $f^\varepsilon_N:=f^\varepsilon_{N,\tau}$ and $f_\varepsilon$ depend on $\tau$ and $\beta$ also, but we don't dwell on the dependency until \autoref{Sec-singular-limit}.
\end{remark}

We  gather here the main results of this work. 
    \subsection{Existence and uniqueness of quasi-regular weak solution} 
    Before giving the main definition, we must recall the following result.
Let  $\mathcal{G}_t$ be  the filtration associated   with     $(W_t^k)_t^{k\in    \mathbb{Z}^2_0}$   namely $
\mathcal{G}_t=\sigma\{W_s^k;  s\in[0,t],  k\in    \mathbb{Z}^2_0\},$
and   denote  by  $\overline{\mathcal{G}}_t$    its completed   filtration\footnote{We assume that $\mathcal{F}_0$ contains all the $P$-null subset of    $\Omega$.}. For $T>0,$  let us  introduce
\begin{align*}
	\mathcal{H}&=L^2(\Omega,\overline{\mathcal{G}}_T,P),   \quad     M_n=\{k\in  \mathbb{Z}^2_0;\quad  \vert   k\vert  \leq    n\}\\
	G&=\displaystyle\bigcup_{n\in\mathbb{N}}G_n;\quad  G_n=\{g=(g_k)_{k\in M_n};    g_k\in  L^2(0,T);\quad    \forall k\in    M_n\}.  \end{align*}
For   $n\in   \mathbb{N}, g\in    G_n$,   we  set
\begin{align*}
	e_g(t)&=\exp{\big(\sum_{k\in M_n}\int_0^tg_k(s)dW^k(s)-\dfrac{1}{2}\sum_{k\in    M_n}\int_0^t\vert   g_k(s)\vert^2   ds\big)}, \text{  for }   t\in[0,T];\\
	\mathcal{D}&=\{e_g(T);    \quad   g\in    G\}.
\end{align*}
Based  on  the Wiener chaos decomposition, we  have (see \cite[Ch.   1]{Nualart2006}) 
\begin{proposition}
    $\mathcal{D}$ is  dense   in  $\mathcal{H}$.
\end{proposition}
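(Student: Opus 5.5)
The plan is to reduce density of $\mathcal{D}$ in $\mathcal{H}=L^2(\Omega,\overline{\mathcal{G}}_T,P)$ to an orthogonality statement. Since $\mathcal{D}$ is a subset of a Hilbert space, it suffices to show that any $\xi\in\mathcal{H}$ with $\E[\xi\, e_g(T)]=0$ for all $g\in G$ must vanish. Density then follows for the closed linear span of $\mathcal{D}$, which is the meaning of the statement.

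Fix such a $\xi$. First choose $g$ piecewise constant on a partition $0=t_0<\dots<t_m=T$, with components indexed by $k\in M_n$. Then $e_g(T)$ is a deterministic positive factor times
\begin{align*}
\exp\Bigl(\sum_{k\in M_n}\sum_j c_{k,j}\,(W^k_{t_{j+1}}-W^k_{t_j})\Bigr),
\end{align*}
so $\E\bigl[\xi \exp(\langle c, Y\rangle)\bigr]=0$ for every real vector $c$. Here $Y$ is the finite-dimensional Gaussian vector of these increments. Define the signed measure $\mu(B)=\E[\xi\,\mathbf 1_{Y\in B}]$. It has finite total variation, and its Laplace transform is finite and identically zero on $\mathbb{R}^{m|M_n|}$. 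By analyticity, extend to complex $c$: the function $z\mapsto \E[\xi e^{\langle z,Y\rangle}]$ is entire, because $\xi\in L^2$ and $e^{\langle z,Y\rangle}$ has all moments. Hence the Fourier transform of $\mu$ vanishes, so $\mu=0$. This gives $\E[\xi\,|\,\sigma(Y)]=0$.

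Second, let $\mathcal{C}$ be the union over $n$, over partitions and over finite index sets of these $\sigma$-algebras. It is a $\pi$-system of events generating $\mathcal{G}_T$: every increment $W^k_t-W^k_s$ is of this form, and every $k\in\mathbb{Z}^2_0$ lies in some $M_n$. The class of events $A$ with $\E[\xi\mathbf 1_A]=0$ is a $\lambda$-system, by dominated convergence using $\xi\in L^1$. By the $\pi$–$\lambda$ theorem it therefore contains $\mathcal{G}_T$, and hence its completion $\overline{\mathcal{G}}_T$, since null sets contribute nothing. Since $\xi$ is $\overline{\mathcal{G}}_T$-measurable, taking $A=\{\xi>0\}$ and $A=\{\xi<0\}$ gives $\xi=0$ a.s.

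The only delicate step is the Fourier/analyticity argument, namely justifying differentiation under the expectation for complex $z$. This uses $|\xi e^{\langle z,Y\rangle}|\le|\xi|e^{\langle \Re z,Y\rangle}$ together with Cauchy–Schwarz and Gaussian exponential moments. Alternatively, one can cite the Wiener chaos decomposition of \cite[Ch. 1]{Nualart2006}: the exponentials $e_g(T)$ generate all Hermite polynomials of $\int g\,dW$ by differentiating in a scaling parameter $s g$ at $s=0$. Their span is dense, which gives the same conclusion directly.
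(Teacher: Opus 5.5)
Your proof is correct. The paper gives no argument of its own and simply invokes the Wiener chaos decomposition from \cite[Ch.~1]{Nualart2006}. You instead prove totality directly by duality. Orthogonality to $e_g(T)$ for piecewise constant $g$ kills the Laplace transform, hence the Fourier transform, of the finite signed measure $B\mapsto\E[\xi\mathbf 1_{\{Y\in B\}}]$. A $\pi$--$\lambda$ argument over the directed family of $\sigma(Y)$'s then extends $\E[\xi\mathbf 1_A]=0$ to all of $\overline{\mathcal{G}}_T$. That family is closed under intersections because you can pass to common refinements of the partitions and to larger $M_n$; you should state this explicitly.

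This is exactly the lemma on totality of exponentials on which the chaos decomposition in the cited reference is built. Your route is therefore the more elementary and self-contained one. It uses only Fourier uniqueness and Dynkin's theorem, and it shows that the exponentials with piecewise constant $g$ on finitely many modes are already total. It also handles the completion explicitly. The citation route is shorter, but applying it literally still needs one small observation: restricting to $g$ with finitely many nonzero modes loses nothing. This follows from the $L^2$-continuity of $g\mapsto e_g(T)$ and the density of such $g$ in $L^2(0,T;\ell^2(\mathbb{Z}^2_0))$.

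Your remark that the statement must be read as density of the linear span is correct and worth keeping. Since $\E[e_g(T)]=1$ for every $g$, $\mathcal{D}$ lies in a closed affine hyperplane of $\mathcal{H}$ and cannot itself be dense. Totality (equivalently: $\E[\xi\, e_g(T)]=0$ for all $g$ forces $\xi=0$) is exactly what the uniqueness argument for quasi-regular solutions uses.
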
  

    \begin{definition}(Quasi-regular weak solutions)\label[definition]{Def-sol-QR-*}
Let $\varepsilon>0,$		    $f_0\in H$    and   $N\in   \mathbb{N}$.   We  say that  $f^\varepsilon_N$  is   quasi-regular weak solution   to  \eqref{Ito-FP_eps-V2}  if  
		$f^\varepsilon_N$    is $(\mathcal{F}_t)_t$-adapted and
		\begin{enumerate}
                \item $f^\varepsilon_N \in L^2(\Omega ;L^2(0, T;V))\cap L^2_{w-*}(\Omega ;L^\infty(0, T;H)) \cap L^2(\Omega;C_w([0,T];H))$.
        \item P-a.s. for any $t\in [0,T]$, it holds  
    	\begin{align*}
                    (f^\varepsilon_N(t), \phi )_H 
        &=(f_0, \phi )_H-\dfrac{1}{\beta}\int_0^t\left(f^\varepsilon_N , \phi \right)_Vds+\alpha_N\int_0^t( f^\varepsilon_N,\Delta_x \phi)_H ds\\&+\sum_{k\in K}\int_0^t\big(f^\varepsilon_N(s),( \sigma_k^N.\nabla_x\phi +(\nabla \sigma_k^Nr).\nabla_r (\varphi_\varepsilon\phi)+(\nabla \sigma_k^Nr).\dfrac{\kappa r}{1-\vert r\vert^2}\varphi_\varepsilon\phi)\big)_HdW^k(s)\notag\\
        & - \dfrac{1}{2}\int_0^t\left(\varphi_\varepsilon^2A^N(r)\nabla_rf^\varepsilon_N,\nabla_r\left(\dfrac{\phi}{\mathbf{M}}\right)\right) ds, \quad \forall \phi\in \mathbb{X}= L^2_{\mathbf{M}} (D;H^2(\mathbb{T}^2)) \cap V. \notag\end{align*}           
			\item (Regularity in Mean) For all    $n\in   \mathbb{N^*}$   and each function   $g\in   G_n$,   the deterministic   function    $V^\varepsilon_N(t,x,r)=\E[f^\varepsilon_N(t,x,r)e_g(t)]$ is  a   measurable  function,   which   belongs to  $ L^\infty(0, T;H)\cap    C_w([0, T];H)\cap L^2(0, T;V)$    and  for any $t\in[0,T],$    it  holds    (see    \Cref{prop-exisence-mean}) 
			\begin{align*}
				&( V^\varepsilon_N(t),\phi)_H=(f_0,\phi)_H+\int_0^t \left(V^\varepsilon_N(s), h_n\cdot \nabla_x \phi + y_n \cdot\nabla_r (\phi\varphi_\varepsilon)+y_n \cdot \dfrac{\kappa r}{1-\vert r\vert^2} (\phi\varphi_\varepsilon) \right)_Hds \\
				&        -\dfrac{1}{\beta}\int_0^t(V_N^\varepsilon, \phi)_V ds+\alpha_N\int_0^t( V^\varepsilon_N,\Delta_x \phi)_H ds- \dfrac{1}{2}\int_0^t\big(\varphi_\varepsilon^2 A^N(r) \nabla_rV^\varepsilon_N,\nabla_r\left(\dfrac{\phi}{\mathbf{M}}\right)\big) ds, \quad \forall \phi\in \mathbb{X},
			\end{align*}
       where   $K_n=\{k\in K: \min(n,N) \leq    \vert   k\vert  \leq    \max(2N,n)\}$ and 
			\begin{align*}
			h_n:=\sum_{k\in K_n}g_k\sigma_k^N   \text{  and } y_n:=\sum_{k\in K_n}g_k(\nabla \sigma_k^N r). 
			\end{align*}
		\end{enumerate}
	\end{definition}
    \begin{remark}
    We wish to draw the reader’s attention to the fact that non-flux boundary conditions in \eqref{Ito-FP_eps-V2} are implicitly encoded in the weak formulation of point (2).
\end{remark}
    In the following we will always use the weight $\M_0$ defined by \eqref{weight-limit} with $\gamma = k_T \beta /2$.
\begin{remark}\label{rmk:def_sol_nocutoff}
    For the system without cut-off, the notion of a solution is almost identical replacing all $\varphi_\varepsilon$ with the constant function $1$ and replacing the weights $\M$ with $\M_0$ and the weighted spaces $H$ and $V$ with $H_0$ and $V_0$ accordingly. In particular, the weak formulation reads:
    \begin{align*}
                    (f_N(t), \phi )_H 
        &=(f_0, \phi )_H-\dfrac{1}{\beta}\int_0^t\left(f_N , \phi \right)_{V_0}ds+\alpha_N\int_0^t( f_N,\Delta_x \phi)_{H_0} ds\\&+\sum_{k\in K}\int_0^t\big(f_N(s),( \sigma_k^N.\nabla_x\phi +(\nabla \sigma_k^Nr).\nabla_r \phi+(\nabla \sigma_k^Nr).\nabla \M_0\phi)\big)_{H_0}dW^k(s)\notag\\
        & - \dfrac{1}{2}\int_0^t\left((A^N-A)(r)\nabla_rf^\varepsilon_N,\nabla_r\left(\dfrac{\phi}{\mathbf{M}_0}\right)\right) ds, \quad \forall \phi\in \mathbb{X}= L^2_{\mathbf{M}_0} (D;H^2(\mathbb{T}^2)) \cap V_0. \notag\end{align*}
\end{remark}
 Let $\varepsilon>0, N\in \mathbb{M},$    similarly to the proof of  \cite[Theorem 7]{Flandoli-Tahraoui}  and  \cite[Theorem 8]{Flandoli-Tahraoui} (see also \autoref{sect-quai-regu-uni}),  we have
\begin{theorem}\label{TH1-*}\textbf{(Existence with cut-off.)} There exists  at    least   one     solution $f^\varepsilon_N$ to  \eqref{Ito-FP_eps-V2} in the sense of  \Cref{Def-sol-QR-*}. Moreover,  it holds 
\begin{align}\label{estimate-solution}
 \E\Big[\sup_{v\in[0,t]} \Vert f^\varepsilon_N(v)\Vert_H^2  \Big]&+\dfrac{4}{\beta}\E\int_0^t [f^\varepsilon_N(s)]_V^2ds\leq 2\Vert f_{0}\Vert_H^2\exp{(4\mathcal{K}_\varepsilon t)} \text{ for all } t\in [0,T],
\end{align}
where $\mathcal{K}_\varepsilon>0$ independent of $N.$
Moreover, for any two quasi-regular weak   solutions $f_1, f_2,$   of  \eqref{Ito-FP_eps-V2}  with    the same    initial data    $f_0$,  if   $(f_i(t),\varphi)$    is  $\overline{\mathcal{G}}_t$-adapted, for both    $i=1,2$ and for    any $\varphi\in V$, then   $f_1=f_2.$
\end{theorem}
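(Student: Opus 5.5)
I would follow the scheme of \cite[Theorems 7 and 8]{Flandoli-Tahraoui} and use a Galerkin approximation built from the eigenbasis of \autoref{propo-construct-base}. In $r$ I take the eigenfunctions $e_i$ of $\mathcal{L}_\kappa$, orthonormal in $L^2_\mathbf{M}$ and orthogonal in $H^1_\mathbf{M}$ by \eqref{basis-eigen}. In $x$ I take Fourier modes on $\T^2$. I project \eqref{Ito-FP_eps-V2} in its weak form (point (2) of \Cref{Def-sol-QR-*}, with test functions $\phi/\mathbf{M}$) onto the span $\mathcal{V}_n$ of the first $n$ tensor products. This gives a finite-dimensional linear Itô SDE with locally Lipschitz (here linear) coefficients, hence a unique global solution $f_n$.

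\textbf{Energy estimate.} I apply Itô's formula to $\Vert f_n\Vert_H^2$. The drift $\frac1\beta\Div_r(\mathbf{M}\nabla_r(f/\mathbf{M}))$ gives $-\frac2\beta[f_n]_V^2$, and $\alpha_N\Delta_x$ gives a nonpositive term. The key point is to write the stochastic terms in the weighted space. Since $\sigma_k^N$ is divergence free, $\sigma_k^N\cdot\nabla_x$ is skew in $H$ and its Itô correction cancels exactly with the $x$-part of the corrector, up to the projection error, which is handled as in \cite{Flandoli-Tahraoui}. For the stretching part I write $\varphi_\varepsilon(\nabla\sigma_k^N r)\cdot\nabla_r f$ in terms of $g=f/\mathbf{M}$:
\[
\varphi_\varepsilon(\nabla\sigma_k^Nr)\cdot\nabla_r f=\mathbf{M}\,\varphi_\varepsilon(\nabla\sigma_k^Nr)\cdot\nabla_r g-\kappa F(r)\,\varphi_\varepsilon\,(\nabla\sigma_k^Nr)\cdot r\, f .
\]
Since $\Div_r(\nabla\sigma_k^N r)=\Div_x\sigma_k^N=0$, the first piece is almost skew in $L^2_\mathbf{M}$. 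The remaining commutator terms and the second piece carry the factor $\varphi_\varepsilon$, which vanishes for $1-|r|\le\varepsilon$. On the support of $\varphi_\varepsilon$ the weight $F(r)\le C/\varepsilon$ and $|\nabla\varphi_\varepsilon|\le C/\varepsilon$ are bounded. The quadratic variation $\sum_k\Vert\varphi_\varepsilon(\nabla\sigma_k^Nr)\cdot\nabla_r f\Vert_H^2$ is then compensated by the corrector $-\frac12(\varphi_\varepsilon^2A^N\nabla_r f,\nabla_r(f/\mathbf{M}))$, up to lower-order terms. Using \autoref{rmq-bound_A^N} ($|A^N v|\le C_A|v|$ uniformly in $N$) and Young's inequality, these lower-order terms are bounded by $\frac1\beta[f]_V^2+\mathcal{K}_\varepsilon\Vert f\Vert_H^2$ with $\mathcal{K}_\varepsilon$ independent of $N$. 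Burkholder–Davis–Gundy applied to the martingale, followed by Gronwall, gives \eqref{estimate-solution} uniformly in $n$. This cancellation is the main obstacle. I would carry it out by explicitly symmetrising $\mathbf{M}^{-1}\Div_r(\mathbf{M}\,\cdot)$ and checking that every term which is not absorbed contains either $\varphi_\varepsilon\nabla\varphi_\varepsilon$ or $\varphi_\varepsilon^2F$. Both are bounded in terms of $\varepsilon$ only, which is exactly why the constant degenerates as $\varepsilon\to0$ (cf. \autoref{rem:coercivity_fails}).

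\textbf{Passage to the limit.} The problem is linear, so no compactness in $\omega$ is needed. I extract weak limits in $L^2(\Omega;L^2(0,T;V))$ and weak-$*$ limits in $L^2_{w-*}(\Omega;L^\infty(0,T;H))$. Each term of the weak formulation is linear and continuous in these topologies, and so is the stochastic integral, viewed as a continuous linear map on progressively measurable processes. Hence the limit $f^\varepsilon_N$ satisfies point (2), and adaptedness is preserved. Weak continuity in $H$ follows from the equation together with the $L^\infty H$ bound, and the estimate passes to the limit by weak lower semicontinuity.

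\textbf{Regularity in mean and uniqueness.} For point (3), I multiply the Galerkin equation by $e_g(t)$ and take expectations. Itô's product rule with $de_g=e_g\sum g_k\,dW^k$ produces the drift terms involving $h_n$ and $y_n$, and the resulting deterministic equation passes to the limit. The linear deterministic PDE satisfied by $V^\varepsilon_N$ has, for bounded smooth $h_n$ and $y_n$, a unique solution in $L^\infty H\cap L^2V$; the cut-off again makes the $y_n$-term a bounded perturbation (\Cref{prop-exisence-mean}). For uniqueness, let $f=f_1-f_2$. Then $V=\E[f\,e_g]$ solves this PDE with zero initial datum, so $V\equiv0$ by a deterministic energy estimate. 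Since $(f(t),\varphi)$ is $\overline{\mathcal{G}}_t$-measurable and $\mathcal{D}$ is dense in $\mathcal{H}$, it follows that $(f(t),\varphi)=0$ a.s. for all $\varphi$ in a countable dense subset of $V$, hence $f_1=f_2$.
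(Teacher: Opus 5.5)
Your plan follows the paper's proof in almost every step. It uses the same tensor Galerkin basis and Itô's formula for $\|f_m\|_H^2$, where the stretching quadratic variation cancels against the $\varphi_\varepsilon^2A^N$ corrector and every leftover term carries $\varphi_\varepsilon F$ or $\nabla\varphi_\varepsilon$, hence is $O(\varepsilon^{-2})$ uniformly in $N$. It then uses BDG plus Gronwall, linear weak limits, the mean equation for $\mathbb{E}[f e_g]$, and the Wiener-chaos density argument. Three points need more than your sketch gives.

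\textbf{The mixed covariation.} Itô's formula produces $\sum_k\|P_m(a_k+b_k)\|_H^2$, with $a_k=\sigma_k^N\cdot\nabla_xf_m$ and $b_k=\varphi_\varepsilon(\nabla\sigma_k^Nr)\cdot\nabla_rf_m$. Your sketch only accounts for $\sum_k\|a_k\|_H^2$ and $\sum_k\|b_k\|_H^2$, and leaves the cross term to an unspecified ``projection error''. The paper removes that cross term by inserting the extra drift $\mathcal{Y}^m$ into the Galerkin system. It then shows this drift vanishes as $m\to\infty$ because $\partial_{x_\gamma}Q(0)=0$. In your unmodified scheme you can instead use $\|P_m(a_k+b_k)\|_H\le\|a_k+b_k\|_H$ together with the pointwise identity $\sum_k\sigma_k^{N,i}\partial_{x_\gamma}\sigma_k^{N,j}=0$ (pair $k$ with $-k$). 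This gives $\sum_k(a_k,b_k)_H=0$. That route is slightly simpler than the paper's, but it is exactly where the mirror symmetry of the noise is used, so state it explicitly.

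\textbf{Young absorption versus integration by parts.} You absorb the leftover term of the form $\kappa\varphi_\varepsilon^2F\,(A^Nr)f\cdot\nabla_r(f/\mathbf{M})$ by Young. This spends part of the dissipation and makes $\mathcal{K}_\varepsilon$ depend on $\beta$. That is harmless for uniformity in $N$; only the constants in the estimate change. The paper keeps the whole dissipation by integrating the leftover $-\frac14\bigl(\varphi_\varepsilon^2A^N\nabla_rf_m^2,\nabla(1/\mathbf{M})\bigr)$ by parts against the Hessian of $1/\mathbf{M}$.

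\textbf{Uniqueness of the mean equation.} This is a real technical step you skip. $V=\mathbb{E}[(f_1-f_2)e_g]$ is only known to lie in $L^\infty(0,T;H)\cap L^2(0,T;V)$, with no regularity in $x$. The equation contains $h_n\cdot\nabla_x$, and admissible test functions must lie in $\mathbb{X}$. So $V$ cannot be used as its own test function, and the Lions--Magenes energy identity is not directly available. The paper mollifies in $x$ only, tests with $[V]_\gamma$, and removes the mollifier with a commutator estimate for $h_n\cdot\nabla_x$. The $y_n$ and $A^N$ terms pass to the limit because $\nabla_rV\in L^2$ on the support of $\varphi_\varepsilon$. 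Your ``deterministic energy estimate'' needs this regularization to be rigorous.
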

    \begin{proof}
    For the proof of \autoref{TH1-*},   see \autoref{Section-exit-uniq}.
    \end{proof}

    \begin{theorem}\label{TH1-*_nocutoff}\textbf{(Existence without cut-off.)} Assume $\kappa /(2+ k_T \beta) >1$. There exists $\eta>0$ such that if $k_T\beta \le \eta$ then there exists  at    least   one     solution $f_N$ to  \eqref{Ito-FP_eps-V2} with $\varphi_\varepsilon\equiv 1$ in the sense of  \autoref{rmk:def_sol_nocutoff}. Moreover,  it holds 
\begin{align}\label{estimate-solution}
 \E\Big[\sup_{v\in[0,t]} \Vert f_N(v)\Vert_{H_0}^2  \Big]&+\E\int_0^t [f_N(s)]_{V_0}^2ds\leq 2\Vert f_{0}\Vert_{H_0}^2\exp{(4\mathcal{K} t)} \text{ for all } t\in [0,T],
\end{align}
where $\mathcal{K}>0$ independent of $N$ but depends on $\k$. 
Moreover, for any two quasi-regular weak   solutions $f_1, f_2,$   of  \eqref{Ito-FP_eps-V2}  with    the same    initial data    $f_0$,  if   $(f_i(t),\varphi)$    is  $\overline{\mathcal{G}}_t$-adapted, for both    $i=1,2$ and for    any $\varphi\in V_0$, then   $f_1=f_2.$
\end{theorem}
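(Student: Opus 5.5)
The plan is to mimic the proof of \autoref{TH1-*}, that is a Galerkin scheme combined with the quasi-regularity/uniqueness argument of \cite[Theorems 7, 8]{Flandoli-Tahraoui}. The new ingredient is that the whole analysis is set in the triple $V_0\hookrightarrow H_0\hookrightarrow V_0'$ built on $\M_0$ with $\gamma=k_T\beta/2$, rather than on $\M$. The reason for this choice is \eqref{elliptic-eq-M0}: with $\gamma=k_T\beta/2$ the deterministic part plus the \emph{limit} It\^o corrector can be written as
\[
\frac1\beta \Div_r\Big(\M_0\big(I+\tfrac{\beta}{2}A(r)\big)\nabla_r\big(\tfrac{f}{\M_0}\big)\Big).
\]
Only the defect $\frac12\Div_r((A^N-A)\nabla_r f)$ is left. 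By \eqref{limit-A-N} this defect is $O(N^{-1}\tau^{-1})$, and it is bounded uniformly thanks to \autoref{rmq-bound_A^N}. This is exactly the weak formulation of \autoref{rmk:def_sol_nocutoff}.

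\textbf{Step 1 (Galerkin and It\^o identity).} Take the eigenbasis of $\mathcal{L}_{\kappa,\gamma}$ on $L^2_{\M_0}$, tensorised with Fourier modes in $x$. This is available because $\mathcal{L}_{\kappa,\gamma}$ is self-adjoint with compact resolvent, as in \autoref{propo-construct-base}. Solve the finite-dimensional SDE and apply It\^o's formula to $\|f_n\|_{H_0}^2$. The drift gives $-\frac2\beta\int \M_0(I+\frac\beta2 A)\nabla(\frac{f}{\M_0})\cdot\nabla(\frac{f}{\M_0})\le -\frac2\beta[f]_{V_0}^2$, plus the defect term. The transport part in $x$ is cancelled by the It\^o correction $\alpha_N\Delta_x$, since $\Div_x\sigma_k=0$ and the weight is $x$-independent. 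The stretching noise has to be handled separately, and that is Step 2.

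\textbf{Step 2 (coercivity: the main obstacle).} The noise $-(\nabla\sigma_k r)\cdot\nabla_r f\,dW^k$ is not antisymmetric in $H_0$, because the weight $1/\M_0$ is not constant. Writing $f=g\M_0$, the quadratic variation together with the correction $\frac12\Div_r(A^N\nabla_r f)$ leaves a commutator. It is of the form
\[
\sum_k\int \big|(\nabla\sigma_k r)\cdot\nabla\log\M_0\big|^2 \frac{f^2}{\M_0}
\]
together with cross terms. Since $|\nabla\log\M_0|\lesssim \frac{\kappa}{1+\gamma}\,d_{\partial D}^{-1}$ and $|\nabla\sigma_k r|^2$ sums to $A^N\le C k_T|r|^2$, this term is bounded by
\[
C\,k_T\,\frac{\kappa^2}{(1+\gamma)^2}\int\frac{1}{d_{\partial D}^2}\frac{f^2}{\M_0}.
\]
The Hardy inequality \eqref{Hardy-ineq} applies for the weight $\M_0$ because $\frac{\kappa}{2(1+\gamma)}=\frac{\kappa}{2+k_T\beta}>1$. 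It bounds this by $C_H k_T\kappa^2(\|f\|_{H_0}^2+[f]_{V_0}^2)$, and \autoref{rem:hardy-constant} keeps $C_H$ uniform. Choosing $\eta$ so that $C C_H k_T\kappa^2\le \frac1\beta$, i.e.\ $k_T\beta\le\eta(\kappa)$, lets this be absorbed into the dissipation $\frac2\beta[f]_{V_0}^2$, leaving $\frac1\beta[f]_{V_0}^2$ with a $\kappa$-dependent $\mathcal K$. I expect the precise bookkeeping here, namely the cross terms between $\nabla g$ and $g\nabla\log\M_0$ (handled by Young's inequality), to be the crux; it is also exactly what fails in \autoref{rem:coercivity_fails}. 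The defect term $(A^N-A)$ is controlled by $[f]_{V_0}$ plus the same Hardy bound, with constants uniform in $N$.

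\textbf{Step 3 (estimate and passage to the limit).} Use BDG for $\sup_t$ and Gronwall to get \eqref{estimate-solution} uniformly in $n$ and $N$. Weak and weak-$*$ compactness in $L^2(\Omega;L^2(0,T;V_0))\cap L^2_{w-*}(\Omega;L^\infty(0,T;H_0))$ then gives a limit. Linearity allows passing to the limit in the stochastic integrals, since they are weakly continuous maps, and weak continuity in time follows as in \autoref{TH1-*}.

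\textbf{Step 4 (regularity in mean and uniqueness).} For $g\in G_n$, $V_N=\E[f_Ne_g]$ satisfies a deterministic linear PDE with bounded drifts $h_n,y_n$, and the same energy estimate applies to it, the drift terms involving $\nabla\log\M_0$ being treated with Hardy as above. Uniqueness of that PDE gives uniqueness of $\E[(f_1-f_2)e_g]$ for all $g$. Density of $\mathcal D$ together with $\overline{\mathcal G}_t$-adaptedness then yields $f_1=f_2$.
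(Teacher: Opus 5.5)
Your proposal is correct and follows essentially the same route as the paper. The paper likewise sets up a Galerkin scheme in the $\M_0$-weighted triple $V_0\hookrightarrow H_0\hookrightarrow V_0'$ and rewrites the FENE drift plus the limit corrector as $\frac1\beta\Div_r\bigl(\M_0(I+\frac\beta2 A)\nabla_r(f/\M_0)\bigr)$, leaving only the $A^N-A$ defect. Your commutator step (the cross term and the term with $|\nabla\M_0/\M_0|^2$, absorbed via the Hardy inequality for $\M_0$ under $\kappa/(2+k_T\beta)>1$ and $k_T\beta$ small) is exactly the paper's \autoref{lem:coercivity_est}, after which the paper also concludes with BDG, Gr\"onwall, and the mean-equation uniqueness argument.
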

\begin{remark}
    Under the assumptions of \autoref{TH1-*_nocutoff}, we obtain also that the solution $f^{\varepsilon}_N $ provided by \autoref{TH1-*} additionally belongs to $L^2(\Omega; L^2(0, T; V_0)) \cap L^2_{w-*}(\Omega; L^\infty(0, T; H_0))$ uniformly in $\varepsilon>0$, satisfies the same weak formulation with $\M$ replaced by $\M_0$ as well as  the function spaces and test functions changed accordingly.  Moreover, $f^{\varepsilon}_N $ converges weakly to $f_N$ from \autoref{TH1-*_nocutoff} in the same space as $\varepsilon\rightarrow 0$. 
\end{remark}
     \subsection{Stochastic scaling limit, removing the cut-off and singular limit results}
The following result concerns  the scaling diffusion limit in the presence of cut-off of the stretching.
      \begin{theorem}\label{THm-scal-eps}(Stochastic scaling limit)
   Let  $f_0\in H.$       For any $\varepsilon>0,$ there exists a unique $f_\varepsilon$ such that
    \begin{itemize}
        \item a.s.  $f_\varepsilon \in L^2(0, T;V) \cap C([0, T];H),$
        \item   $f_\varepsilon$ satisfies:  $f_\varepsilon(0)=f_0$ and a.s. for any $\Phi \in V$ it holds
        \begin{align}\label{limit-PDE-eps-finalV1}
    \langle\dfrac{\partial f_\varepsilon}{\partial t},\Phi\rangle_{V^\prime,V}=-\int_{\mathbb{T}^2}\int_D\big ( \dfrac{\k}{\beta} f_\varepsilon F(r)r+\dfrac{1}{\beta}\nabla_rf_\varepsilon+ \dfrac{1}{2}\varphi_\varepsilon^2 A(r) \nabla_rf_\varepsilon\big)\cdot \nabla_r\left(\dfrac{\Phi}{\mathbf{M}}\right) drdx.
\end{align}
    \end{itemize}
      Moreover, the following convergence holds, as $N\to +\infty$
    \begin{align}\label{cv-N-eps-timeV1}
        f^\varepsilon_N(t) \rightharpoonup f_\varepsilon(t) \text{ in } L^2(\Omega,H) \text{ for any } t\in [0,T],
    \end{align}
  where $(f^\varepsilon_N)_{N}$ is unique quasi-regular weak solution (in the sense of \autoref{Def-sol-QR-*}) to       \eqref{Ito-FP_eps-V2}.
    \end{theorem}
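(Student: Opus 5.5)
The plan is to follow the scheme of \cite{Flandoli-Tahraoui}, which extracts a deterministic limit from a weakly convergent subsequence and then identifies it by uniqueness.

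\textbf{Step 1: compactness.} By \autoref{TH1-*}, the bound \eqref{estimate-solution} is uniform in $N$. Hence $(f^\varepsilon_N)_N$ is bounded in $L^2(\Omega;L^2(0,T;V))\cap L^2_{w-*}(\Omega;L^\infty(0,T;H))$, and we extract a subsequence converging weakly (weakly-$*$) to some $f_\varepsilon$ in these spaces.

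\textbf{Step 2: identification of the limit.} We pass to the limit in the weak formulation of \Cref{Def-sol-QR-*}, tested against $\phi\in\mathbb{X}$ and multiplied by an arbitrary bounded random variable $Y$. It is cleanest to take $Y=e_g(T)$ with $g\in G_n$ and use the density of $\mathcal{D}$. The terms are handled as follows.
\begin{itemize}
\item \emph{Diffusion term in $x$.} Since $\alpha_N\to0$ by \Cref{lem-matrix-r} and $f^\varepsilon_N$ is bounded in $L^2(\Omega;L^2(0,T;H))$, the term $\alpha_N\int_0^t(f^\varepsilon_N,\Delta_x\phi)_H\,ds$ vanishes.
\item \emph{Corrector term.} Using \eqref{limit-A-N}, we write $A^N=A+\tfrac1\tau O(N^{-1})P^N$ with $P^N$ bounded on $D$. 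The remainder contributes $O(N^{-1})\,\E\int\|\nabla_r f^\varepsilon_N\|$ paired with $\varphi_\varepsilon^2\nabla_r(\phi/\M)$, which tends to $0$. The cutoff $\varphi_\varepsilon$ is essential here: it makes $\varphi_\varepsilon^2\nabla_r(\phi/\M)$ a legitimate, bounded test field, so that $\nabla_r f^\varepsilon_N$ being bounded in a weighted $L^2$ suffices. The main part is linear in $f^\varepsilon_N$ and passes to the weak limit.
\item \emph{Stochastic integral.} When it is tested against $e_g(T)$, Itô isometry turns it into $\int_0^t \E[(f^\varepsilon_N,\,h_n\cdot\nabla_x\phi+y_n\cdot\dots)_H\,e_g(s)]\,ds$, which is exactly point (3) of the definition.
\end{itemize}

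The key observation is that for $N>n$ the index set $K_n\cap\{N\le|k|\le2N\}$ becomes empty in the relevant sense: $g$ is supported on $|k|\le n$ while $\theta_k^{N,\tau}=0$ outside $N\le|k|\le2N$, so $h_n=y_n=0$ once $N>n$. Hence $V^\varepsilon_N=\E[f^\varepsilon_N e_g]$ satisfies the deterministic limit equation with the $A^N$ remainder, and passes to a limit satisfying \eqref{limit-PDE-eps-finalV1} with initial datum $\E[e_g(T)]f_0=f_0$.

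The step I expect to be the \textbf{main obstacle} is showing that the limit is deterministic, i.e.\ that $\E[f_\varepsilon e_g(T)]$ equals one fixed function $\bar f_\varepsilon$ for every $g$. This follows from uniqueness for \eqref{limit-PDE-eps-finalV1}: it is a linear parabolic equation on the Gelfand triple $V\subset H\subset V'$. Its operator is coercive, since $\varphi_\varepsilon^2A(r)\ge0$ (as $3|r|^2I-2r\otimes r\ge |r|^2 I$) and since $\nabla_r f=\M\nabla_r(f/\M)+(f/\M)\nabla\M$, with the extra term controlled through the cutoff by $C_\varepsilon\|f\|_H\|f\|_V$. Lions' theorem then gives existence and uniqueness of $f_\varepsilon\in L^2(0,T;V)\cap C([0,T];H)$.

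\textbf{Step 3: conclusion.} Since $\E[(f_\varepsilon-\bar f_\varepsilon)Y]=0$ for all $Y$ in the dense set $\mathcal{D}$, the limit is $f_\varepsilon=\bar f_\varepsilon$ a.s. Uniqueness of the limit then gives convergence of the whole sequence. To get pointwise-in-time convergence \eqref{cv-N-eps-timeV1}, we repeat the identification at fixed $t$: we test with $\phi\, e_g(t)$, use the uniform bounds, and use the fact that $t\mapsto V^\varepsilon_N(t)$ is equicontinuous in $V'$. This equicontinuity comes from the equation, and Arzelà–Ascoli then yields $\E[(f^\varepsilon_N(t),\phi)_H Y]\to(f_\varepsilon(t),\phi)_H\E Y$ for $Y$ in $\mathcal{D}$, and by density for all $Y\in L^2(\Omega)$.
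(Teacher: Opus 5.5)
Your argument is correct, but it deals with the martingale term differently from the paper.

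\textbf{The paper's route.} The paper tests the weak formulation against $\mathbb{I}_A\xi$ for arbitrary $A\in\mathcal{F}$ and shows that the stochastic integral itself vanishes in $L^2(\Omega;C([0,T]))$. By BDG, its quadratic variation is bounded by $\sup_k(\theta_k^{N,\tau})^2$ and $\sup_k|k|^2(\theta_k^{N,\tau})^2=a_\tau^2N^{-2}$ times $\Vert f^\varepsilon_N\Vert^2_{L^2(\Omega\times[0,T];H)}$. This bound comes from Bessel's inequality for the (incomplete) orthonormal systems $\frac{k^\perp}{|k|}\cos k\cdot x$, $\frac{k}{|k|}\otimes\frac{k^\perp}{|k|}\sin k\cdot x$, etc. The limit then satisfies the PDE $\omega$-wise. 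Existence of $f_\varepsilon$ is a by-product of the limit, and determinism follows from the energy/Gr\"onwall uniqueness of Proposition~\ref{prop-uniq-1}.

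\textbf{Your route.} You never estimate the martingale. You test against $e_g$ and use that $\theta_k^{N,\tau}=0$ for $|k|<N$. Hence the covariation between $f^\varepsilon_N$ and $e_g$ vanishes identically once $N>n$, and $V^\varepsilon_N$ solves a noise-free equation. Existence and uniqueness for the limit PDE come independently from Lions' theorem; your G\aa rding estimate via the cut-off and $3|r|^2I-2r\otimes r\ge |r|^2I$ is right. Totality of $\mathcal{D}$ then identifies the limit.

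\textbf{Comparison.} Your approach needs only that the noise modes escape to infinity, together with the uniform energy bound. It requires no quantitative smallness of the quadratic variation, and it reuses the quasi-regular machinery already built for uniqueness. The paper's approach gives stronger information: the fluctuation term goes to zero strongly. It also works with arbitrary $\mathcal{F}$-measurable test events.

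\textbf{Two details to make explicit.}
\begin{enumerate}
\item $\mathcal{D}$ is only dense in $L^2(\Omega,\overline{\mathcal{G}}_T,P)$. You therefore need $f^\varepsilon_N$, and hence its weak limit, to be $\overline{\mathcal{G}}_T$-measurable. This holds for the Galerkin-constructed solution with deterministic $f_0$, which is the one in the uniqueness class of \autoref{TH1-*}. General test variables are then handled by conditioning on $\overline{\mathcal{G}}_T$, rather than by density ``for all $Y\in L^2(\Omega)$''.
\item Because of the $\alpha_N\Delta_x$ term, $\partial_t V^\varepsilon_N$ is bounded in $L^2(0,T;\mathbb{X}')$, not in $L^2(0,T;V')$. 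Equicontinuity therefore holds in $\mathbb{X}'$. This still suffices for Arzel\`a--Ascoli in $C([0,T];H_w)$, given the uniform $L^\infty(0,T;H)$ bound and uniqueness of the limit.
\end{enumerate}
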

    \begin{proof}
        See \autoref{Section-diffusion-limit}.
    \end{proof}
The next statement concerns the removing of the cut-off.
        \begin{theorem}\label{THm-eps-nocut}(Removing of the cut-off)
Let $f_0\in H_0.$        There exists  a unique $f$ such that
    \begin{itemize}
        \item a.s.  $f \in L^2(0, T;V_0) \cap C([0, T];H_0),$
        \item   $f$ satisfies:  $f(0)=f_0$ and a.s. for any $\Phi \in V_0$ it holds
        \begin{align}\label{limit-PDE-final-NOCUT-V1}
    \langle\partial_t f,\Phi\rangle_{(V_0)^\prime,V_0}=-\int_{\mathbb{T}^2}\int_D\bigl(\mathbf{M}_0 (\dfrac{1}{\beta}I+\dfrac{1}{2}A(r))\nabla_r(\dfrac{f}{\mathbf{M}_0})\bigr)\cdot \nabla_r\left(\dfrac{\Phi}{\mathbf{M}_0}\right) drdx.
\end{align}
    \end{itemize}
    Moreover, the following convergence hold, as $\varepsilon\to 0$
    \begin{align}\label{cv-eps-*V1}
        f_\varepsilon(t) \rightharpoonup f(t) \text{ in } H_0 \text{ for any } t\in [0,T],
    \end{align}
    where $(f_\varepsilon)_{\varepsilon>0}$ is the unique solution to  \eqref{limit-PDE-eps-finalV1} in the sense of \autoref{THm-scal-eps}.
    \end{theorem}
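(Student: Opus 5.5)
We prove \autoref{THm-eps-nocut} by uniform-in-$\varepsilon$ energy estimates in the limit weighted spaces $H_0,V_0$, a compactness argument, and identification of the limit. Uniqueness is a separate linear argument.

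\textbf{Rewriting the equation.} For fixed $\varepsilon$, $f_\varepsilon$ is the deterministic solution of \eqref{limit-PDE-eps-finalV1} with $\gamma=k_T\beta/2$. Since $\varphi_\varepsilon\equiv1$ away from the $2\varepsilon$-neighbourhood of $\partial D$, we rewrite the flux using \eqref{elliptic-eq-M0}:
$$\tfrac{\kappa}{\beta}F r f+\big(\tfrac1\beta I+\tfrac12\varphi_\varepsilon^2A\big)\nabla_r f=\mathbf{M}_0\big(\tfrac1\beta I+\tfrac12\varphi_\varepsilon^2A\big)\nabla_r\big(\tfrac{f}{\mathbf{M}_0}\big)+\tfrac12(\varphi_\varepsilon^2-1)A\,\tfrac{\nabla_r\mathbf{M}_0}{\mathbf{M}_0}f.$$
The last term is supported in the layer $\{1-|r|\le 2\varepsilon\}$. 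There $|A\nabla\mathbf{M}_0/\mathbf{M}_0|\lesssim k_T\kappa\,d_{\partial D}^{-1}$, because $\nabla \mathbf{M}_0/\mathbf{M}_0=-(I+\gamma\mathcal A)^{-1}\kappa F r$.

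\textbf{Energy estimate.} We test with $\Phi=f_\varepsilon\mathbf{M}_0/\mathbf{M}$, i.e.\ we work directly in $H_0$. First we justify that this test function is admissible: $f_\varepsilon\in V$ with weight $\mathbf{M}\sim d^{\kappa/2}$, which vanishes faster than $\mathbf{M}_0\sim d^{\kappa/(2(1+\gamma))}$, so by density and truncation we may work with $H_0,V_0$. The main part gives the coercive term
$$\tfrac1\beta[f_\varepsilon]_{V_0}^2+\tfrac12\int \varphi_\varepsilon^2\,\mathbf{M}_0\,A\nabla(f_\varepsilon/\mathbf{M}_0)\cdot\nabla(f_\varepsilon/\mathbf{M}_0).$$
The remainder is bounded by
$$C k_T\kappa\int_{d\le2\varepsilon}\frac{|f_\varepsilon/\mathbf{M}_0|}{d}\,|\nabla(f_\varepsilon/\mathbf{M}_0)|\,\mathbf{M}_0 .$$
By Cauchy--Schwarz and the Hardy inequality for $\mathbf{M}_0$ (valid when $\kappa>2(1+\gamma)$, see \autoref{lemma-properties-weight} and \autoref{rem:hardy-constant}), this is $\le C k_T\kappa\,C_H\|f_\varepsilon\|_{V_0}^2$.

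This step is the main obstacle. The constant is not small in general, so we need a different bound without the smallness of \autoref{TH1-*_nocutoff}. One option is to exploit that $\varphi_\varepsilon^2-1\le0$ and that $A\nabla\mathbf{M}_0\cdot\nabla\mathbf{M}_0$ has a sign. Then we integrate by parts the term $(\varphi_\varepsilon^2-1)A\nabla\mathbf{M}_0\,f\cdot\nabla(f/\mathbf{M}_0)=\tfrac12(\varphi_\varepsilon^2-1)A\nabla\mathbf{M}_0\cdot\nabla\big((f/\mathbf{M}_0)^2\big)$. This moves the derivative onto $\varphi_\varepsilon^2$ and onto $\operatorname{div}(A\nabla\mathbf{M}_0)$.

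The boundary contribution vanishes because $\varphi^2_\varepsilon-1=-1$ at $\partial D$ but $\mathbf{M}_0 A\nabla(\cdot)\cdot\eta$ is controlled by the no-flux condition; this must be checked carefully. The term $|\nabla\varphi_\varepsilon^2|\le C/\varepsilon\sim C/d$ on the layer is again Hardy-type, with $(f/\mathbf{M}_0)^2$ and the weight $\mathbf{M}_0 d^{-1}\cdot d^{-1}$. For the $\operatorname{div}$ term we expect a favourable sign near the boundary, since $\nabla\mathbf{M}_0$ points inward. If sign control fails, we fall back on the regime $\kappa>2(1+\gamma)$ with $k_T\beta$ small, as in \autoref{TH1-*_nocutoff}.

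\textbf{Compactness, limit and uniqueness.} Gronwall gives $\sup_t\|f_\varepsilon\|_{H_0}^2+\int_0^T[f_\varepsilon]_{V_0}^2\le C$ uniformly in $\varepsilon$, and the equation bounds $\partial_tf_\varepsilon$ in $L^2(0,T;V_0')$. Extracting subsequences, $f_\varepsilon\rightharpoonup f$ weakly in $L^2(0,T;V_0)$, weakly-$*$ in $L^\infty(0,T;H_0)$, and in $C_w([0,T];H_0)$ by Arzel\`a--Ascoli in the weak topology; the last gives \eqref{cv-eps-*V1} for every $t$. We pass to the limit in the weak formulation with test functions $\Phi\in C^\infty_c$ in $r$, dense in $V_0$ for $\kappa\ge2(1+\gamma)$; otherwise we use $C^1(\overline D)$ and the trace remarks. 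For such $\Phi$, the cut-off equals $1$ on the support of $\Phi$ once $\varepsilon$ is small, so the limit is \eqref{limit-PDE-final-NOCUT-V1}.

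Uniqueness follows from the linear energy identity in $H_0$ for the difference of two solutions, using coercivity of $I+\gamma\mathcal A$, which is $\ge I$. Time continuity $f\in C([0,T];H_0)$ follows from the Lions--Magenes lemma in the triple \eqref{spaces-2}. Uniqueness also shows that the whole family $f_\varepsilon$, not just a subsequence, converges.
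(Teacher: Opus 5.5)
There is a genuine gap, and it sits at the step that carries the theorem: the $\varepsilon$-uniform bound in $H_0$ and $V_0$.

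You expand the cut-off flux around the \emph{fixed} weight $\mathbf{M}_0$. This creates the remainder $\tfrac12(\varphi_\varepsilon^2-1)A\,\frac{\nabla_r\mathbf{M}_0}{\mathbf{M}_0}f_\varepsilon$, of size $k_T\kappa\,d_{\partial D}^{-1}|f_\varepsilon|$ on the layer $\{1-|r|\le 2\varepsilon\}$. This term is scale invariant, so the thinness of the layer buys nothing. Take $f=\mathbf{M}_0u$ with $u$ a unit bump of width $\varepsilon$ inside the layer. Then $\int d_{\partial D}^{-2}f^2/\mathbf{M}_0$ and $[f]_{V_0}^2$ are both of order $\varepsilon^{\kappa/(2(1+\gamma))-1}$, while $\|f\|_{H_0}^2$ is much smaller. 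Hence Cauchy--Schwarz plus Hardy only gives an $O(k_T\kappa)$ multiple of $[f_\varepsilon]_{V_0}^2$ against the dissipation $\beta^{-1}[f_\varepsilon]_{V_0}^2$. Hardy itself also needs $\kappa>2(1+\gamma)$.

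Your sign rescue does not work either. After integrating by parts, the piece containing $\nabla_r(\varphi_\varepsilon^2)$ enters $\frac{d}{dt}\|f_\varepsilon\|_{H_0}^2$ with a nonnegative sign, because $\nabla_r(\varphi_\varepsilon^2)$ and $\nabla_r\mathbf{M}_0$ both point towards the origin. Its size is $\varepsilon^{-1}|\nabla_r\mathbf{M}_0|/\mathbf{M}_0\sim d_{\partial D}^{-2}$, which is the same non-small Hardy-type term. The piece with $\Div_r(|r|^2\nabla_r\mathbf{M}_0)$ is favourable only for $\kappa>2(1+\gamma)$. It also carries the factor $1-\varphi_\varepsilon^2$, which vanishes at the inner edge of the transition layer where $|\nabla_r\varphi_\varepsilon|$ can be of order $\varepsilon^{-1}$, so it cannot absorb the bad piece.

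The argument therefore closes only in the regime of \autoref{TH1-*_nocutoff}. The statement has no restriction on $\kappa,\beta,k_T$, and that regime is exactly what the cut-off route is designed to avoid (see \autoref{rem:coercivity_fails}). Incidentally, the $H_0$ test function should be $\mathbf{M}f_\varepsilon/\mathbf{M}_0$, not $f_\varepsilon\mathbf{M}_0/\mathbf{M}$.

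The missing idea is to let the weight depend on $\varepsilon$. The radial function
\[
\mathbf{M}_\varepsilon(r)=\exp\Bigl(-\kappa\int_0^{|r|}\frac{s\,ds}{(1-s^2)\bigl(1+\gamma\varphi_\varepsilon^2(s)s^2\bigr)}\Bigr),\qquad \gamma=\frac{k_T\beta}{2},
\]
solves $\frac{\kappa}{\beta}\mathbf{M}_\varepsilon F(r)r+\bigl(\frac1\beta I+\frac12\varphi_\varepsilon^2A\bigr)\nabla_r\mathbf{M}_\varepsilon=0$, since $A\nabla_r g=k_T|r|^2\nabla_r g$ for radial $g$. So the cut-off flux is exactly $\mathbf{M}_\varepsilon\bigl(\frac1\beta I+\frac12\varphi_\varepsilon^2A\bigr)\nabla_r(f_\varepsilon/\mathbf{M}_\varepsilon)$, with no remainder at all.

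The test function $\mathbf{M}f_\varepsilon/\mathbf{M}_\varepsilon$ is admissible in $V$: where $\varphi_\varepsilon=0$, $\mathbf{M}_\varepsilon$ is a constant multiple of $\mathbf{M}$, and $|\nabla_r\log(\mathbf{M}/\mathbf{M}_\varepsilon)|\lesssim\varepsilon^{-1}$. Testing with it gives an exact dissipative identity in $L^2(\T^2\times D;dx\,dr/\mathbf{M}_\varepsilon)$, for every choice of the parameters. Since $0\le\varphi_\varepsilon\le1$, one has $\mathbf{Z}\mathbf{M}\le\mathbf{M}_\varepsilon\le\widetilde{\mathbf{M}}_0$. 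This bounds $\sup_t\|f_\varepsilon\|_{H_0}$ and $\sqrt{\mathbf{M}_\varepsilon}\nabla_r(f_\varepsilon/\mathbf{M}_\varepsilon)$ in $L^2$ uniformly in $\varepsilon$.

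Because $\mathbf{M}_\varepsilon=\widetilde{\mathbf{M}}_0$ on $\{|r|\le1-2\varepsilon\}$ and $\sqrt{\mathbf{M}_\varepsilon}\to\sqrt{\widetilde{\mathbf{M}}_0}$ in $L^p(D)$, the weak limits are identified as $f/\widetilde{\mathbf{M}}_0$ and $\sqrt{\widetilde{\mathbf{M}}_0}\nabla_r(f/\widetilde{\mathbf{M}}_0)$. You then test with $\mathbf{M}\widetilde\phi$, $\widetilde\phi\in C^1(\overline D)$, and use the density of $C^1(\overline D)$ in $\mathcal{H}^1_{\widetilde{\mathbf{M}}_0}$. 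This covers both $\kappa\ge2(1+\gamma)$ and $\kappa<2(1+\gamma)$ without any Hardy inequality. With this estimate in hand, your uniqueness argument and the convergence of the whole family go through as you describe.
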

    \begin{proof}
        See \autoref{section-remove-cut-off}.
    \end{proof}
 {\begin{remark}
     Under the assumptions of  \autoref{TH1-*_nocutoff},  we can prove directly that, as  
  $N\to +\infty$    \begin{align}\label{cv-N--no-cut}
        f_N(t) \rightharpoonup f(t) \text{ in } L^2(\Omega,H_0) \text{ for any } t\in [0,T],
    \end{align}
    where $f$ is the unique solution to \eqref{limit-PDE-final-NOCUT-V1} and $f_N$ is the unique solution to  \eqref{Ito-FP_eps-V2} with $\varphi_\varepsilon\equiv 1$ in the sense of  \autoref{rmk:def_sol_nocutoff}. The proof is completely analogous, and in fact easier than that of \autoref{THm-scal-eps}, therefore we omit it. 
 \end{remark}}
  Let $\tau>0$    and consider the regime where the relaxation time of polymers and the dominant time-scale of the small scale turbulent flow satisfies $\beta=\zeta \tau, \zeta>0$. Now, set
  \begin{align}\label{limit-matrix-main}
   \mathbf{M}_0(r)=\dfrac{1}{\mathbf{Z}}\left(
\dfrac{1 - \vert r\vert^{2}}{1 + \alpha \vert r\vert^{2}}
\right)^{\frac{\kappa}{2\left(1 + \alpha \right)}}, \quad \mathcal{A}(r)= \alpha(3 \left\vert r\right\vert^2I-2r\otimes r),  \quad \alpha=\frac{\zeta\lambda}{2}
\end{align} where $\mathbf{Z}$ satisfies $\int_D \mathbf{M}_0 dr=1.$
 Based on \autoref{THm-eps-nocut}, there exists  a unique $f_\tau$ such that   $f_\tau \in L^2(0, T;V_0) \cap C([0, T];H_0)$    and  solves
        \begin{align}\label{limit-PDE-final-NOCUT-V1-main}
    \langle\partial_t f_\tau,\Phi\rangle_{(V_0)^\prime,V_0}=- \dfrac{1}{\zeta \tau}\int_{\mathbb{T}^2}\int_D\bigl(\mathbf{M}_0 (I+\mathcal{A}(r))\nabla_r(\dfrac{f_\tau}{\mathbf{M}_0})\bigr)\cdot \nabla_r\left(\dfrac{\Phi}{\mathbf{M}_0}\right) drdx,
\end{align}
supplemented with initial data  $f_\tau(0)=f_{\tau,0},$ we refer to \autoref{Sec-singular-limit} for more details. The following result concerns the singular limit as $\tau\downarrow 0.$
 \begin{theorem}\label{Thm-singular-limit-main}(Singular limit)
    Assume that $f_{\tau, 0} \to f_0$ in $H_0$ as $\tau \downarrow 0.$ Then the following  holds.
    \begin{itemize}
        \item $ f_\tau$ converges to 
        $ \rho_0\otimes \M_0$ in $L^2(0,T;H_0)$, 
     where  $\rho_0(x)=\int_D f_0(x,r) dr$ and $f_\tau$ is the unique solution  to \eqref{limit-PDE-final-NOCUT-V1-main}. Moreover, the following estimate (convergence rate) holds
              \begin{align}\label{ineq-converg-equi-Main}
 \int_0^T  \Vert f_{\tau}(t)-\rho_0\otimes \mathbf{M}_0\Vert_{H_0}^2 dt \leq C_P  \xi\tau (\Vert f_{\tau, 0}\Vert_{H_0}^2+\Vert \rho_0 \Vert^2_{L^2(\mathbb{T}^2)})+2T\Vert f_{\tau,0}-f_0\Vert_{H_0}^2.
          \end{align}
          \item Assume that $f_{\tau,0}$ converges to $ \rho_{0}\otimes\mathbf{M}_0$ in $H_0.$ Then,  $ f_\tau$ converges to 
        $ \rho_0\otimes \M_0$ in $C([0,T];H_0)$, 
     where  $\rho_0(x)=\int_D f_0(x,r) dr$. Moreover, the following estimate (convergence rate) holds
     \begin{align}\label{ineq-conv-equi-Cont-Main}
         \displaystyle\sup_{t\in [0,T]}\Vert f_{\tau}(t)-\rho_0\otimes \mathbf{M}_0\Vert_{H_0}^2 \leq   \Vert f_{\tau,0}- \rho_{0}\otimes\mathbf{M}_0\Vert_{H_0}^2.
     \end{align}
    \end{itemize}
\end{theorem}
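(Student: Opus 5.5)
We would prove \autoref{Thm-singular-limit-main} by an energy method in $H_0$, exploiting that the limit equation \eqref{limit-PDE-final-NOCUT-V1-main} is a gradient flow with a $\tau^{-1}$ prefactor. The flow is purely in $r$, with no $x$-derivatives. First, testing \eqref{limit-PDE-final-NOCUT-V1-main} with $\Phi=\mathbf{M}_0\,\psi(x)$ for $\psi\in L^2(\mathbb{T}^2)$ shows that $\nabla_r(\Phi/\mathbf{M}_0)=0$. Hence $\rho_\tau(t,x):=\int_D f_\tau\,dr$ is conserved in time: $\rho_\tau(t)=\rho_{\tau,0}:=\int_D f_{\tau,0}dr$. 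Consequently $\rho_{\tau,0}\otimes\mathbf{M}_0$ is itself a stationary solution, and $g_\tau:=f_\tau-\rho_{\tau,0}\otimes\mathbf{M}_0$ solves the same linear equation with $\int_D g_\tau dr=0$ for a.e.\ $x$.

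The key ingredient is a Poincaré inequality in $L^2_{\mathbf{M}_0}$. For $h$ with $\int_D h\,dr=0$,
\begin{align*}
\int_D \Big|\frac{h}{\mathbf{M}_0}\Big|^2\mathbf{M}_0dr\le C_P\int_D \mathbf{M}_0(I+\mathcal{A}(r))\nabla_r\Big(\frac{h}{\mathbf{M}_0}\Big)\cdot\nabla_r\Big(\frac{h}{\mathbf{M}_0}\Big)dr .
\end{align*}
The zero-mean condition is exactly $\int_D (h/\mathbf{M}_0)\mathbf{M}_0 dr=0$. Since $I+\mathcal{A}\ge I$ ($\mathcal{A}$ is positive semidefinite, with eigenvalues $\alpha|r|^2$ and $3\alpha|r|^2$), it suffices to prove the Poincaré inequality for the probability measure $\mathbf{M}_0dr$. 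That follows from compactness of the embedding $\mathcal{H}^1_{\mathbf{M}_0}\hookrightarrow L^2_{\mathbf{M}_0}$ (equivalently, discreteness of the spectrum of $\mathcal{L}_{\kappa,\gamma}$, as in \autoref{propo-construct-base}) together with the simplicity of the eigenvalue $0$, whose kernel consists of constants. I expect this step to be the main technical point, since the weight degenerates at $\partial D$. I would either invoke the discrete spectrum stated after \eqref{operator-limit} or argue directly by the standard contradiction/compactness argument using the comparison of $\mathbf{M}_0$ with $(1-|r|^2)^{\kappa/2(1+\alpha)}$.

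Testing the $g_\tau$ equation with $g_\tau$ (justified in the Lions–Magenes framework, since $f_\tau\in L^2(0,T;V_0)\cap C([0,T];H_0)$) and integrating in $x$ gives
\begin{align*}
\frac12\frac{d}{dt}\|g_\tau\|_{H_0}^2+\frac{1}{\zeta\tau}\int_{\mathbb{T}^2}\!\int_D\mathbf{M}_0(I+\mathcal{A})\nabla_r\Big(\frac{g_\tau}{\mathbf{M}_0}\Big)\cdot\nabla_r\Big(\frac{g_\tau}{\mathbf{M}_0}\Big)=0 .
\end{align*}
For the second bullet, monotonicity of $\|g_\tau\|_{H_0}$ yields $\sup_t\|f_\tau(t)-\rho_{\tau,0}\otimes\mathbf{M}_0\|_{H_0}^2\le\|f_{\tau,0}-\rho_{\tau,0}\otimes\mathbf{M}_0\|^2_{H_0}$. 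The initial datum there tends to $\rho_0\otimes\mathbf{M}_0$, so $\rho_{\tau,0}\to\rho_0$. Moreover, $\|\rho\otimes\mathbf{M}_0\|_{H_0}=\|\rho\|_{L^2(\mathbb{T}^2)}$ and Cauchy–Schwarz gives $\|\int_D h\,dr\|_{L^2}\le\|h\|_{H_0}$; using these, we can replace $\rho_{\tau,0}$ by $\rho_0$ to get \eqref{ineq-conv-equi-Cont-Main}. If the author identifies $\rho_0$ with the mass of the datum, it is exactly the contraction estimate.

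For the first bullet, we integrate in time and use the Poincaré inequality: $\int_0^T\|g_\tau\|_{H_0}^2dt\le C_P\zeta\tau\int_0^T[\,g_\tau]^2\,dt\le \tfrac{C_P\zeta\tau}{2}\|g_\tau(0)\|_{H_0}^2$. Also $\|g_\tau(0)\|_{H_0}^2\le 2\|f_{\tau,0}\|_{H_0}^2+2\|\rho_{\tau,0}\|_{L^2}^2$. We then write $f_\tau-\rho_0\otimes\mathbf{M}_0=g_\tau+(\rho_{\tau,0}-\rho_0)\otimes\mathbf{M}_0$. The last term is bounded in $H_0$ by $\|f_{\tau,0}-f_0\|_{H_0}$, again by Cauchy–Schwarz in $r$. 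Using $(a+b)^2\le2a^2+2b^2$ then gives \eqref{ineq-converg-equi-Main}, with $\xi$ absorbing $\zeta$ and numerical constants. Both right-hand sides vanish as $\tau\downarrow0$, which proves the convergence in $L^2(0,T;H_0)$.
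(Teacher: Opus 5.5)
Your argument is essentially the paper's: the energy identity for the difference with a stationary state, conservation of $\int_D f_\tau\,dr$ by testing with $\psi(x)\mathbf{M}_0$, the weighted Poincar\'e inequality in $r$, and Cauchy--Schwarz to bound $\rho_{\tau,0}-\rho_0$ by $\Vert f_{\tau,0}-f_0\Vert_{H_0}$. The only bookkeeping difference is in the first bullet. You subtract $\rho_{\tau,0}\otimes\mathbf{M}_0$ so that Poincar\'e applies in zero-mean form. The paper instead subtracts $\rho_0\otimes\mathbf{M}_0$ and uses the Poincar\'e--Wirtinger inequality with the mean term, which mass conservation turns into $T\Vert\rho_{\tau,0}-\rho_0\Vert^2_{L^2(\mathbb{T}^2)}$.

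One step in the second bullet needs fixing. The triangle inequality plus Cauchy--Schwarz give \eqref{ineq-conv-equi-Cont-Main} only up to a numerical constant larger than $1$, not with constant $1$. There are two ways to repair it.

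\begin{itemize}
\item Use orthogonality. Functions with zero $r$-mean are $H_0$-orthogonal to every $\rho\otimes\mathbf{M}_0$, since $(h,\rho\otimes\mathbf{M}_0)_{H_0}=\int_{\mathbb{T}^2}\rho\int_D h\,dr\,dx$. Hence $\Vert f_\tau(t)-\rho_0\otimes\mathbf{M}_0\Vert_{H_0}^2=\Vert g_\tau(t)\Vert_{H_0}^2+\Vert\rho_{\tau,0}-\rho_0\Vert_{L^2}^2$ for every $t$, including $t=0$, and monotonicity of $\Vert g_\tau\Vert_{H_0}$ gives the sharp bound.
\item Do as the paper does. $\rho_0\otimes\mathbf{M}_0$ is stationary for \emph{every} $\rho_0\in L^2(\mathbb{T}^2)$, whatever the mass of $f_{\tau,0}$, so $f_\tau-\rho_0\otimes\mathbf{M}_0$ solves the same equation and the energy identity gives the contraction directly.
\end{itemize}

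In particular, no identification of $\rho_0$ with the mass of $f_{\tau,0}$ is needed, and none would be correct in general. As a side benefit, the same orthogonality removes the factor $2$ from $(a+b)^2\le 2a^2+2b^2$ in your first bullet. It also gives $\Vert g_\tau(0)\Vert_{H_0}\le\Vert f_{\tau,0}\Vert_{H_0}$, so your route then yields a bound slightly sharper than \eqref{ineq-converg-equi-Main}.
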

\begin{proof}
    See \autoref{Sec-singular-limit}.
\end{proof}
Recall that $H\hookrightarrow H_0.$
The following theorem is a consequence of \autoref{THm-scal-eps},  \autoref{THm-eps-nocut} and \autoref{Thm-singular-limit-main}.
 \begin{theorem} 
Let $\tau>0, \varepsilon>0$ and	 $N\in   \mathbb{N}$ and  set  $\beta=\zeta \tau, \zeta>0$	   in  \eqref{Ito-FP_eps-V2}.  Let $f_{\tau,0}\in H$ and  $f^\varepsilon_{\tau,N}$  be the   quasi-regular weak solution   to  \eqref{Ito-FP_eps-V2}  in the sense of \autoref{Def-sol-QR-*} with the initial data $f_{\tau,0}.$
The following holds.
\begin{itemize}
    \item   Assume there exists $f_0\in H_0$ such that   $f_{\tau, 0} \to f_0$ in $H_0$ as $\tau \downarrow 0.$ Then, we have
    \begin{align*}
    \displaystyle \text{weak}-\lim_{\tau\downarrow 0}\lim_{\varepsilon\downarrow 0}\lim_{N\uparrow +\infty}   f^\varepsilon_{\tau,N}= \rho_{0}\otimes\mathbf{M}_0 \text{ in } L^2(0,T;L^2(\Omega;H_0)), \quad \rho_0(x)=\int_D f_0(x,r) dr.
    \end{align*}
 \item     Assume that $f_{\tau,0}$ converges to $ \rho_{0}\otimes\mathbf{M}_0$ in $H_0$ as $\tau \downarrow 0.$ Then,  we have
      \begin{align*}
    \displaystyle \text{weak}-\lim_{\tau\downarrow 0}\lim_{\varepsilon\downarrow 0}\lim_{N\uparrow +\infty}   f^\varepsilon_{\tau,N}(t)= \rho_{0}\otimes\mathbf{M}_0 \text{ in } L^2(\Omega;H_0) \text{ for any } t\in [0,T].
    \end{align*}
\end{itemize} 
    \end{theorem}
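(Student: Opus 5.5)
The plan is to chain the three limit theorems, checking at each stage that the mode of convergence is strong enough to pass to the next one. Fix $\tau>0$, so $\beta=\zeta\tau$, and fix $\varepsilon>0$.

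\textbf{Limit $N\to\infty$.} By \autoref{THm-scal-eps}, $f^\varepsilon_{\tau,N}(t)\rightharpoonup f_{\tau,\varepsilon}(t)$ in $L^2(\Omega;H)$ for every $t$. Since $H\hookrightarrow H_0$ continuously, the same weak convergence holds in $L^2(\Omega;H_0)$. The limit $f_{\tau,\varepsilon}$ is deterministic.

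To obtain weak convergence in $L^2(0,T;L^2(\Omega;H_0))$ (first bullet), I would use the uniform bound \eqref{estimate-solution} from \autoref{TH1-*}. It gives $\sup_N \sup_t \E\Vert f^\varepsilon_{\tau,N}(t)\Vert_H^2\le C_\varepsilon$. For a test function $\Psi\in L^2(0,T;L^2(\Omega;H_0))$, the quantities $\E(f^\varepsilon_{\tau,N}(t),\Psi(t))_{H_0}$ converge pointwise in $t$ and are dominated by $C_\varepsilon^{1/2}\Vert\Psi(t)\Vert$, which is integrable in $t$. Dominated convergence then gives the time-integrated weak convergence.

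\textbf{Limit $\varepsilon\to0$.} This is the main obstacle. The functions $f_{\tau,\varepsilon}$ are deterministic, so \autoref{THm-eps-nocut} gives $f_{\tau,\varepsilon}(t)\rightharpoonup f_\tau(t)$ in $H_0$ for every $t$. To pass to $L^2(0,T;H_0)$ I need a bound uniform in $\varepsilon$. I would derive it from the energy estimate for \eqref{limit-PDE-eps-finalV1}, tested against $\Phi=f_{\tau,\varepsilon}\mathbf{M}/\mathbf{M}_0$ (or $\sup_t\Vert f_{\tau,\varepsilon}(t)\Vert_{H_0}$), exactly as it is presumably established in the proof of \autoref{THm-eps-nocut}. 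A uniform bound in $L^\infty(0,T;H_0)$ would then allow dominated convergence as in the previous step.

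The notation $\lim_{N}$ must be read as this deterministic weak limit. Deterministic weak convergence in $L^2(0,T;H_0)$ implies weak convergence in $L^2(0,T;L^2(\Omega;H_0))$, because pairing with $\Psi$ reduces to pairing with $\E\Psi$.

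\textbf{Limit $\tau\to0$.}
\begin{itemize}
\item For the first bullet, \autoref{Thm-singular-limit-main} together with \eqref{ineq-converg-equi-Main} gives strong convergence $f_\tau\to\rho_0\otimes\mathbf{M}_0$ in $L^2(0,T;H_0)$. The right-hand side tends to $0$ because $\Vert f_{\tau,0}\Vert_{H_0}$ is bounded and $f_{\tau,0}\to f_0$. Strong convergence implies weak convergence in $L^2(0,T;L^2(\Omega;H_0))$.
\item For the second bullet, \eqref{ineq-conv-equi-Cont-Main} gives $\sup_t\Vert f_\tau(t)-\rho_0\otimes\mathbf{M}_0\Vert_{H_0}\to0$, hence convergence for each $t$, viewed as constants in $L^2(\Omega;H_0)$. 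Here $\rho_0=\int_D f_0\,dr$, and since $\int_D\mathbf{M}_0=1$ it is consistent with the limiting datum.
\end{itemize}

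A remaining subtlety is that the initial data in \autoref{THm-eps-nocut} must lie in $H_0$. This holds because $f_{\tau,0}\in H\hookrightarrow H_0$, so $f_{\tau}$ is the solution of \eqref{limit-PDE-final-NOCUT-V1-main} with datum $f_{\tau,0}$.
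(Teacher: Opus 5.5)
Your chaining argument is correct and is what the paper means when it calls this theorem a consequence of \autoref{THm-scal-eps}, \autoref{THm-eps-nocut} and \autoref{Thm-singular-limit-main}; your dominated-convergence upgrades from pointwise-in-time to $L^2(0,T;L^2(\Omega;H_0))$ weak convergence make explicit what the paper leaves implicit. The $\varepsilon$-uniform $L^\infty(0,T;H_0)$ bound you need is \eqref{est-limit-2} (the proof even gives the weak-$*$ convergence \eqref{cv-appro-5-eps-h_*}), but the paper obtains it by testing with $\mathbf{M}f_\varepsilon/\mathbf{M}_\varepsilon$ for the intermediate weight \eqref{weight-inter}, not with $f_\varepsilon\mathbf{M}/\mathbf{M}_0$. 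Testing with the latter leaves the term $\frac12\int(1-\varphi_\varepsilon^2)\frac{f_\varepsilon}{\mathbf{M}_0}A\nabla_r\mathbf{M}_0\cdot\nabla_r\bigl(\frac{f_\varepsilon}{\mathbf{M}_0}\bigr)$, which has no sign and is singular like $d_{\partial D}^{-1}$ in the cut-off layer, so that estimate does not close for general parameters.
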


  {\begin{remark}
  Let $\tau>0$,	 $N\in   \mathbb{N}$ and  set  $\beta=\zeta \tau, \zeta>0$	   in  \eqref{Ito-FP_eps-V2}.           Under the assumptions of  \autoref{TH1-*_nocutoff}, let   $f_{\tau,N}$ be the unique quasi-regular weak solution to  \eqref{Ito-FP_eps-V2} with $\varphi_\varepsilon\equiv 1$ in the sense of  \autoref{rmk:def_sol_nocutoff}, with the initial data $f_{\tau,0}.$
The following holds.
\begin{itemize}
    \item   Assume there exists $f_0\in H_0$ such that   $f_{\tau, 0} \to f_0$ in $H_0$ as $\tau \downarrow 0.$ Then, we have
    \begin{align*}
    \displaystyle \text{weak}-\lim_{\tau\downarrow 0}\lim_{N\uparrow +\infty}   f_{\tau,N}= \rho_{0}\otimes\mathbf{M}_0 \text{ in } L^2(0,T;L^2(\Omega;H_0)), \quad \rho_0(x)=\int_D f_0(x,r) dr.
    \end{align*}
 \item     Assume that $f_{\tau,0}$ converges to $ \rho_{0}\otimes\mathbf{M}_0$ in $H_0$ as $\tau \downarrow 0.$ Then,  we have
      \begin{align*}
    \displaystyle \text{weak}-\lim_{\tau\downarrow 0}\lim_{N\uparrow +\infty}   f_{\tau,N}(t)= \rho_{0}\otimes\mathbf{M}_0 \text{ in } L^2(\Omega;H_0) \text{ for any } t\in [0,T].
    \end{align*}
\end{itemize} 
  \end{remark}}
\begin{remark}(The 3D case)
   It is worth drawing the reader's attention to the fact that with cosmetic changes to what was made here, it is possible to prove a similar result in a three-dimensional setting.  However, we preferred to present the result in 2D because we could present the stochastic turbulent velocity explicitly, see \autoref{assumption_noise-2D}. The main difference between 2D and 3D will be in the form of the stochastic turbulent velocity of the small scales,   see \cite[Subsection 7.4]{Flandoli-Tahraoui} for the structure of the noise in 3D (equation (73)). We emphasize that the matrix obtained, after the scaling limit, as $N\to +\infty$, has a shape similar to that of the matrix  $A$ given by \eqref{matrix-limit}. More precisely, a similar line of reasoning can be used in the 3D case to obtain the following matrix $\mathbf{A}$ given by 
   $$\mathbf{A}(r)= \dfrac{\lambda}{\tau}\left(2\lvert r\rvert^2 I-r\otimes r\right), \quad r\in \mathbb{R}^3.$$ Consequently, the final density $\mathbf{M}_0$  given by \eqref{limit-matrix-main} is the same with 
   $\widetilde{\alpha}=\dfrac{\zeta\lambda}{2}$ instead of $\alpha$.
 
\end{remark}

 \section{Existence and uniqueness of  quasi-regular weak solution (\autoref{TH1-*})}\label{Section-exit-uniq}
In this section, we demonstrate the existence and uniqueness within the class of \textit{quasi-regular weak solution} to \eqref{Ito-FP_eps-V2}, proving \autoref{TH1-*}. We first demonstrate the existence of weak solutions by constructing a solution via a finite-dimensional approximation (Galerkin approximation with  suitable basis). Then, we  derive some uniform estimates in some  Sobolev spaces with appropriate weight. Next, we prove the existence and uniqueness of the solution to the associated mean equation, which ultimately allows us to demonstrate uniqueness within the class of quasi-regular weak solutions.
\subsection{Existence of weak solutions}

In the following, let $\varepsilon>0$ and $N\in \mathbb{N}^*.$

\subsubsection{Galerkin approximation}
 Let $\{k_l\}_{l\in \mathbb{N}}$ be an orthonormal basis of $L^2(\mathbb{T}^2)$ (\textit{e.g.}  the  basis constructed \textit{via} the eigenfunctions of the Laplacian  on $\mathbb{T}^2$ where $\{k_l\}_{l\in \mathbb{N}} \subset C^\infty(\mathbb{T}^2)$  ). Thus, we define the Hilbert basis  of $H$ by $\{w_{i,l}=e_i\otimes k_l\}_{(i,l)\in \mathbb{N}\times \mathbb{N}},$
where $(e_i)$ are given by \eqref{basis-eigen}. 	To simplify the notation, the duality between	$V$	and	$V^\prime$	will	be	denoted	$\langle	\cdot,\cdot\rangle$	instead	of		$\langle	\cdot,\cdot\rangle_{V^\prime,V}$.	Thus,	we	recall	the	following	equality (Lions-Guelfand triple)
	\begin{align}\label{Lions-Gelfand-triple}
		\langle	f,u\rangle=(f,u)_H,	\quad	\forall	f\in	L^2(\T^2; L^2_\mathbf{M}),	\quad \forall	u\in	V.
	\end{align} 

	Now,	let $m\in   \mathbb{N}^*$ and
	denote	by	$H_m=\text{span}\{w_{i,l}:\quad i,l\leq m\}$	and	the		projection operator	$P_m$	defined from	$V^\prime$	to	$H_m$		by
	$P_m:V^\prime\to	H_m;\quad	u\mapsto	P_mu=\displaystyle\sum_{i,l=1}^m\langle	u,w_{i,l}\rangle_{V^\prime,V}	w_{i,l}.	$
	In	particular,	the	restriction	of	$P_m$	to	$H$,	denoted	by	the	same	way,	is	the	$(\cdot,\cdot)$-orthogonal	projection	from	$H$	to	$H_m$	and	given	by
	\begin{align}\label{def-Projection}
		P_m:H\to	H_m;\quad	u\mapsto	P_mu=\displaystyle\sum_{i+l=1}^m 
		(	u,w_{i,l})_H	w_{i,l}.		\end{align}
	
	We 	notice	that	$\Vert	P_mu\Vert_H\leq\Vert	u\Vert_H$, $\forall u\in H$,
	then 	$\Vert	P_m\Vert_{L(H,H)}\leq	1$.
			Let us introduce
    \begin{align}\label{approximation-seq}
         f_m(t)=\displaystyle\sum_{i,l=1}^mg^{i,l}_m(t)w_{i,l} \text{ and set }
	f_m(0)=P_mf_0\in H_m. 
    \end{align}
 		 and 	consider	the	following	finite	dimensional	SDE
\begin{align}\label{approx-linear*SDE}
    &(f_m(t),w_{i,l})_H-(f_{0},w_{i,l})_H=\int_0^t(\mathcal{Y}^mf_m,w_{i,l})_Hds, \quad 1\leq i,l \leq m \\
    &	-\int_0^t\dfrac{1}{\beta}\big(\mathbf{M}\nabla_r(\dfrac{f_m}{\mathbf{M}}),\nabla_r(\dfrac{w_{i,l}}{\mathbf{M}})\big)ds -\alpha_N\int_0^t(\nabla_x f_m,\nabla_x w_{i,l})_Hds- \dfrac{1}{2}\int_0^t\big(\varphi_\varepsilon^2A^N(r) \nabla_rf_m,\nabla_r(\dfrac{w_{i,l}}{\mathbf{M}})\big)ds\notag\\
            &-\sum_{k\in K}\int_0^t(\sigma_k^N.\nabla_xf_mdW^k,w_{i,l})_H -\sum_{k\in K}\int_0^t(\varphi_\varepsilon(\nabla \sigma_k^Nr).\nabla_rf_m dW^k,w_{i,l})_H, \notag
\end{align}
where $
    (\mathcal{Y}^mf_m,w_{i,l})_H=\sum_{k\in K}\big(\sigma_k^N.\nabla_x P_m[\varphi_\varepsilon (\nabla \sigma_k^Nr).\nabla_rf_m],w_{i,l}\big)_H.$
	For every $m\in \mathbb{N}^*$, 	the above system \eqref{approx-linear*SDE}   is   a   linear system  of  SDE, by  classical result    (see    e.g.    \cite[Chapter   V]{Oksendal2013}),   we  get the existence and uniqueness of $\mathcal{F}_t$-adapted   solution    $f_m\in C([0,T],L^2(\Omega;H_m))$.
\begin{lemma}\label{sec:estimates}
The solution $f_m$ of \eqref{approx-linear*SDE} satisfies
    \begin{align}\label{key-est-Galerkin}
 \E\Big[\sup_{v\in[0,t]} \Vert f_m(v)\Vert_H^2  \Big]&+\dfrac{4}{\beta}\E\int_0^t [f_m(s)]_V^2ds\leq 2\Vert f_{0}\Vert_H^2\exp{(4\mathcal{K}_\varepsilon t)} \text{ for all } t\in [0,T].
\end{align}
where $\mathcal{K}_\varepsilon=\dfrac{2C_B^2\mathbf{C}(\kappa+1)^2 a_\tau^2+\kappa (5+\kappa)C_A}{4\varepsilon^2}.$
\end{lemma}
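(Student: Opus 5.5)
We apply It\^o's formula to $\Vert f_m(t)\Vert_H^2=\int |f_m|^2\mathbf{M}^{-1}drdx$ for the finite-dimensional SDE \eqref{approx-linear*SDE}, i.e. we test the equation against $f_m/\mathbf{M}$ (legitimate since $f_m\in H_m$). The resulting identity contains five kinds of terms.

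\emph{Dissipative terms.} The deterministic part gives $-\frac{2}{\beta}[f_m]_V^2$ and $-2\alpha_N\Vert\nabla_x f_m\Vert_H^2\le 0$.

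\emph{It\^o corrector.} The transport part in $x$ produces $\sum_k\Vert P_m(\sigma_k^N\cdot\nabla_x f_m)\Vert_H^2$. The stretching part produces $\sum_k\Vert P_m(\varphi_\varepsilon(\nabla\sigma_k^N r)\cdot\nabla_r f_m)\Vert_H^2$, together with cross terms. The operator $\mathcal{Y}^m$ is designed to compensate these cross terms, exactly as in \cite[Theorem 7]{Flandoli-Tahraoui}.

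\emph{The $\varphi_\varepsilon^2A^N$ term.} This is $-\big(\varphi_\varepsilon^2A^N\nabla_rf_m,\nabla_r(f_m/\mathbf{M})\big)$.

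\emph{Martingale terms.} These are handled later via Burkholder--Davis--Gundy.

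The key algebraic step uses the properties of the noise. Since $\mathrm{div}_x\sigma_k^N=0$, space homogeneity gives $\sum_k\sigma_k^N\otimes\sigma_k^N=2\alpha_N I$, and the transport terms cancel. For the stretching, $\mathrm{tr}\,\nabla\sigma_k^N=0$, so $\mathrm{div}_r((\nabla\sigma_k^Nr)g)=(\nabla\sigma_k^Nr)\cdot\nabla_r g$. We write $f_m=\mathbf{M}h$ with $h=f_m/\mathbf{M}$. Then
\[
(\nabla\sigma_k^N r)\cdot\nabla_r f_m=\mathbf{M}(\nabla\sigma_k^N r)\cdot\nabla_r h-\kappa F(r)\,\mathbf{M}h\,(\nabla\sigma_k^Nr)\cdot r,
\]
using $\nabla\mathbf{M}=-\kappa rF\mathbf{M}$.

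Squaring, the first piece gives $\int\varphi_\varepsilon^2\, A^N\nabla_rh\cdot\nabla_rh\,\mathbf{M}$. The projection $P_m$ only decreases norms, and it is absorbed as in \cite{Flandoli-Tahraoui} through $\mathcal{Y}^m$. This piece cancels against the main part of the $\varphi_\varepsilon^2A^N$ drift term once we expand $\nabla_rf_m=\mathbf{M}\nabla_rh-\kappa rF\mathbf{M}h$. What remains consists of terms of the form
\[
\int\varphi_\varepsilon^2\,\kappa F\,h\,(A^Nr)\cdot\nabla_r h\,\mathbf{M}
\qquad\text{and}\qquad
\int\varphi_\varepsilon^2\,\kappa^2F^2|\nabla\sigma_k^Nr\cdot r|^2h^2\mathbf{M}.
\]
Both carry the singular factor $F=(1-|r|^2)^{-1}$.

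Here the cut-off does the work, and this is the expected main obstacle. On $\mathrm{supp}\,\varphi_\varepsilon$ we have $1-|r|\ge\varepsilon$, hence $\varphi_\varepsilon F\lesssim\varepsilon^{-1}$. The terms where the derivative falls on $\varphi_\varepsilon$, of size $|\nabla\varphi_\varepsilon|\le C/\varepsilon$, arise from the weak formulation with $\nabla_r(\varphi_\varepsilon\phi)$.

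To bound these terms we use \autoref{rmq-bound_A^N} ($|A^N|\le C_A$) together with $|\nabla\sigma_k^N r|\le C_B\,\theta_k^N|k|$, so that $\sum_k(\theta_k^N)^2|k|^2\lesssim\mathbf{C}a_\tau^2$ uniformly in $N$. Young's inequality then absorbs the $\nabla_r h$ factors into a fraction of $\frac{1}{\beta}[f_m]_V^2$, or into the exact cancellation with the $A^N$ term. All remaining terms are bounded by
\[
\frac{2C_B^2\mathbf{C}(\kappa+1)^2a_\tau^2+\kappa(5+\kappa)C_A}{4\varepsilon^2}\,\Vert f_m\Vert_H^2=\mathcal{K}_\varepsilon\Vert f_m\Vert_H^2 .
\]
Tracking the constants precisely, including the $(\kappa+1)^2$ coming from the combination $\nabla_r(\varphi_\varepsilon h)+\kappa rF\varphi_\varepsilon h$, is the delicate point. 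It is essential that no bound depends on $N$: only the trace identities and the uniform bounds on $\sum_k|\nabla\sigma_k^N|^2$ and on $A^N$ may be used.

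Finally, we take the supremum in time and expectation. By Burkholder--Davis--Gundy, the martingale term is bounded by
\[
\tfrac12\E\sup_{v\le t}\Vert f_m(v)\Vert_H^2+C\,\E\!\int_0^t\sum_k\big(f_m,\text{noise}_k f_m\big)_H^2\,ds .
\]
The quadratic variation is controlled by the same $\varepsilon^{-2}$ bounds after the same integration by parts in $r$ and $x$, since the antisymmetric parts vanish. Absorbing the $\frac12$ term explains the factor $2$ in front of $\Vert f_0\Vert_H^2$, using $\Vert P_mf_0\Vert_H\le\Vert f_0\Vert_H$. Gronwall's lemma then yields \eqref{key-est-Galerkin} with rate $4\mathcal{K}_\varepsilon$.
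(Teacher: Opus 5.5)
Your overall architecture is the paper's, and your list of leftover terms after writing $f_m=\mathbf{M}h$ is algebraically correct. The shared steps are: It\^o's formula on the Galerkin system, $\mathcal{Y}^m$ removing the transport/stretching cross terms, the transport corrector being $\le 0$ against $\alpha_N\Vert\nabla_xf_m\Vert_H^2$, exact cancellation of the stretching corrector against the principal part of the $\varphi_\varepsilon^2A^N$ drift, the cut-off giving $\varepsilon^{-2}$ bounds, then BDG and Gr\"onwall.

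The step that fails as described is how you dispose of the first-order leftover $\int\varphi_\varepsilon^2\kappa F\,h\,(A^Nr)\cdot\nabla_rh\,\mathbf{M}$. After the exact cancellation there is no $A^N\nabla_rh\cdot\nabla_rh$ term left to absorb it into. Young's inequality into $\frac1\beta[f_m]_V^2$ consumes part of the dissipation and produces a Gr\"onwall rate of order $\beta\kappa^2C_A^2\varepsilon^{-2}$. That still gives a bound uniform in $m$ and $N$, which is all that is used later. It does not give the stated inequality, whose dissipation coefficient is the full $\frac4\beta$ and whose $\mathcal{K}_\varepsilon$ is independent of $\beta$. Your final claim that everything is bounded by exactly $\mathcal{K}_\varepsilon\Vert f_m\Vert_H^2$ is asserted, not derived.

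The missing idea is to integrate by parts instead of using Young. Since $h\nabla_rh=\frac12\nabla_r(h^2)$ and $\varphi_\varepsilon$ vanishes near $\partial D$, there is no boundary term and the whole remainder becomes zeroth order. In the paper's variables no change of unknown is needed; after $\Vert P_mg\Vert_H\le\Vert g\Vert_H$ in the corrector, one has
\[
-\tfrac12\big(\varphi_\varepsilon^2A^N\nabla_rf_m,\nabla_r(\tfrac{f_m}{\mathbf{M}})\big)+\tfrac12\sum_{k\in K}\big\Vert\varphi_\varepsilon(\nabla\sigma_k^Nr)\cdot\nabla_rf_m\big\Vert_H^2=\tfrac14\big(f_m^2,\Div_r[\varphi_\varepsilon^2A^N\nabla(\tfrac{1}{\mathbf{M}})]\big).
\]
The explicit Hessian of $1/\mathbf{M}$, together with $\varphi_\varepsilon F\le\varepsilon^{-1}$ and $|\nabla\varphi_\varepsilon|\le C\varepsilon^{-1}$, then gives the paper's bound $\frac{\kappa(5+\kappa)C_A}{4\varepsilon^2}\Vert f_m\Vert_H^2$.

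The constant bookkeeping also needs redoing.
\begin{itemize}
\item In $\mathcal{K}_\varepsilon$, $C_B$ is the Burkholder--Davis--Gundy constant, not a bound on $|\nabla\sigma_k^Nr|$. What is used is $\sum_k|\nabla\sigma_k^Nr|^2\le a_\tau^2\sum_{k\in K}|k|^{-2}\le\mathbf{C}a_\tau^2$.
\item The part $2C_B^2\mathbf{C}(\kappa+1)^2a_\tau^2$ comes only from the martingale term. Since $\mathrm{tr}\,\nabla\sigma_k^N=0$, one has $(\varphi_\varepsilon(\nabla\sigma_k^Nr)\cdot\nabla_rf_m,f_m)_H=-\frac12\big(f_m^2,(\nabla\sigma_k^Nr)\cdot\nabla_r(\varphi_\varepsilon/\mathbf{M})\big)$. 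Splitting $\mathbf{M}\nabla_r(\varphi_\varepsilon/\mathbf{M})=\nabla\varphi_\varepsilon+\kappa rF\varphi_\varepsilon$ is what produces the factor $(1+\kappa)\varepsilon^{-1}$. Bounding the deterministic remainders by the full $\mathcal{K}_\varepsilon\Vert f_m\Vert_H^2$ and then adding the BDG contribution on top therefore double-counts.
\item Your BDG bound $C\,\E\int_0^t\sum_k(f_m,\mathrm{noise}_kf_m)_H^2ds$ is quartic in $f_m$ and cannot close a Gr\"onwall argument. The correct chain is $\E\sup_{[0,t]}|M|\le C_B\E[\langle M\rangle_t^{1/2}]$, with $\langle M\rangle_t\lesssim(1+\kappa)^2\mathbf{C}a_\tau^2\varepsilon^{-2}\sup_{[0,t]}\Vert f_m\Vert_H^2\int_0^t\Vert f_m\Vert_H^2ds$, followed by Young against $\frac14\E\sup_{[0,t]}\Vert f_m\Vert_H^2$.
\end{itemize}
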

\begin{proof} 
By applying It\^o formula and \eqref{approx-linear*SDE}, one gets
  \begin{align*}
   \dfrac{1}{2} \Vert f_m(t)\Vert_H^2&-\dfrac{1}{2}\Vert P_mf_{0}\Vert_H^2=\int_0^t(\mathcal{Y}^mf_m,f_m)_Hds  \notag\\
    &	-\int_0^t\dfrac{1}{\beta}\big(\mathbf{M}\nabla_r(\dfrac{f_m}{\mathbf{M}}),\nabla_r(\dfrac{f_m}{\mathbf{M}})\big)ds -\alpha_N\int_0^t(\nabla_x f_m,\nabla_x f_m)_Hds \notag\\&- \dfrac{1}{2}\int_0^t\big(\sum_{k\in K}\left(\varphi_\varepsilon^2(\nabla \sigma_k^Nr) \otimes (\nabla \sigma_k^Nr)\right) \nabla_rf_m,\nabla_r(\dfrac{f_m}{\mathbf{M}})\big)ds\\
            &-\sum_{k\in K}\int_0^t(\sigma_k^N.\nabla_xf_mdW^k,f_m)_H -\sum_{k\in K}\int_0^t(\varphi_\varepsilon(\nabla \sigma_k^Nr).\nabla_rf_m dW^k,f_m)_H \\
            &+\dfrac{1}{2}[\sum_{i,l=1}^m\sum_{k\in K}\int_0^t(\sigma_k^N.\nabla_xf_m,w_{i,l})_H^2 ds +\int_0^t(\varphi_\varepsilon(\nabla \sigma_k^Nr).\nabla_rf_m,w_{i,l})_H^2 ds\\
            &+2\sum_{k\in K}\int_0^t\big(\sigma_k^N.\nabla_xf_m, \sum_{i,l=1}^m(\varphi_\varepsilon(\nabla \sigma_k^Nr).\nabla_rf_m,w_{i,l})_Hw_{i,l}\big)_H ds].
\end{align*}
Note that 
\begin{align*}
    &\sum_{k\in K}\int_0^t\big(\sigma_k^N.\nabla_xf_m, \sum_{i,l=1}^m(\varphi_\varepsilon(\nabla \sigma_k^Nr).\nabla_rf_m,w_{i,l})_Hw_{i,l}\big)_H ds\\&=\sum_{k\in K}\int_0^t\big(\sigma_k^N.\nabla_xf_m, P_m(\varphi_\varepsilon(\nabla \sigma_k^Nr).\nabla_rf_m)\big)_H ds=-\int_0^t(\mathcal{Y}^mf_m,f_m)_Hds.
\end{align*}
Moreover, we have 
\begin{align*}
    &\dfrac{1}{2}\sum_{i,l=1}^m\sum_{k\in K}\int_0^t(\sigma_k^N.\nabla_xf_m,w_{i,l})_H^2 ds-\alpha_N\int_0^t(\nabla_x f_m,\nabla_x f_m)_Hds\\&=\dfrac{1}{2}\sum_{k\in K}\int_0^t\Vert P_m\sigma_k^N.\nabla_xf_m\Vert_H^2 ds-\alpha_N\int_0^t(\nabla_x f_m,\nabla_x f_m)_Hds\\
    &\leq  \dfrac{1}{2}\sum_{k\in K}\int_0^t\Vert \sigma_k^N.\nabla_xf_m\Vert_H^2 ds-\alpha_N\int_0^t(\nabla_x f_m,\nabla_x f_m)_Hds\leq 0.
\end{align*}
Since $\Div \sigma_k^N=0,$ we get $(\sigma_k^N.\nabla_xf_m,f_m)_H=0$ P-a.s. Therefore
\begin{align}
   \dfrac{1}{2} \Vert f_m(t)\Vert_H^2&+\dfrac{1}{\beta}\int_0^t\big(\mathbf{M}\nabla_r(\dfrac{f_m}{\mathbf{M}}),\nabla_r(\dfrac{f_m}{\mathbf{M}})\big)ds-\dfrac{1}{2}\Vert P_mf_{0}\Vert_H^2\le\notag\\
  &- \dfrac{1}{2}\int_0^t\big(\sum_{k\in K}\left(\varphi_\varepsilon^2(\nabla \sigma_k^Nr) \otimes (\nabla \sigma_k^Nr)\right) \nabla_rf_m,\nabla_r(\dfrac{f_m}{\mathbf{M}})\big)ds\label{ineq-estm-Galer}\\
            & -\sum_{k\in K}\int_0^t(\varphi_\varepsilon(\nabla \sigma_k^Nr).\nabla_rf_m dW^k(s),f_m)_H+ \sum_{k\in K}\sum_{j,k \le m}\dfrac{1}{2}\int_0^t(\varphi_\varepsilon(\nabla \sigma_k^Nr).\nabla_rf_m,w_{i,l})_H^2 ds.\notag
\end{align}
Notice that 
\begin{align}
    &- \dfrac{1}{2}\int_0^t\big(\sum_{k\in K}\left(\varphi_\varepsilon^2(\nabla \sigma_k^Nr) \otimes (\nabla \sigma_k^Nr)\right) \nabla_rf_m,\nabla_r(\dfrac{f_m}{\mathbf{M}})\big)ds+ \sum_{k\in K}\sum_{j,k \le m}\dfrac{1}{2}\int_0^t(\varphi_\varepsilon(\nabla \sigma_k^Nr).\nabla_rf_m,w_{i,l})_H^2 ds \notag\\
         &=- \dfrac{1}{2}\int_0^t\big(\sum_{k\in K}\Vert \varphi_\varepsilon (\nabla \sigma_k^Nr)\cdot\nabla_rf_m\Vert_H^2ds+ \sum_{k\in K}\dfrac{1}{2}\int_0^t\Vert P_m\varphi_\varepsilon(\nabla \sigma_k^Nr).\nabla_rf_m\Vert_H^2 ds\notag\\&-\dfrac{1}{4}\int_0^t\big(\sum_{k\in K}\left(\varphi_\varepsilon^2(\nabla \sigma_k^Nr) \otimes (\nabla \sigma_k^Nr)\right) \nabla_rf_m^2,\nabla(\dfrac{1}{\mathbf{M}})\big)ds\notag\\
        &\leq  -\dfrac{1}{4}\int_0^t\big(\sum_{k\in K}\left(\varphi_\varepsilon^2(\nabla \sigma_k^Nr) \otimes (\nabla \sigma_k^Nr)\right) \nabla_rf_m^2,\nabla(\dfrac{1}{\mathbf{M}})\big)ds\notag\\
        &\leq \dfrac{1}{4}\int_0^t\big(f_m^2, \sum_{k\in K} \Div_r\Big[\left(\varphi_\varepsilon^2(\nabla \sigma_k^Nr) \otimes (\nabla \sigma_k^Nr)\right)\nabla(\dfrac{1}{\mathbf{M}})\Big]\big)ds \leq  \dfrac{\kappa (5+\kappa)C_A}{4\varepsilon^2}\int_0^t\Vert f_m(s)\Vert_H^2 ds. \label{esti-cov-uniq}
\end{align}
Indeed,  by using \Cref{rmq-bound_A^N},  $\varphi_\varepsilon\dfrac{1}{1-\vert r\vert^2} \leq \dfrac{1}{\varepsilon}$  and  
\begin{align*} 
    &\int_0^t\big(f_m^2, \sum_{k\in K} \Div_r\Big[\left(\varphi_\varepsilon^2(\nabla \sigma_k^Nr) \otimes (\nabla \sigma_k^Nr)\right)\nabla(\dfrac{1}{\mathbf{M}})\Big]\big)ds\\&=2\int_0^t\big(f_m^2, \sum_{k\in K} \left(\varphi_\varepsilon \nabla \varphi_\varepsilon(\nabla \sigma_k^Nr) \otimes (\nabla \sigma_k^Nr)\right)\nabla(\dfrac{1}{\mathbf{M}})\big)ds\\
    &+ \int_0^t\big(\varphi_\varepsilon^2 f_m^2, \sum_{k\in K} \left((\nabla \sigma_k^Nr) \otimes (\nabla \sigma_k^Nr)\right):\mathcal{D}^2(\dfrac{1}{\mathbf{M}})\big)ds,
\end{align*}
where $\mathcal{D}^2$ denotes the Hessian matrix given by 
\begin{align*}
    \nabla(\dfrac{1}{\mathbf{M}})=\dfrac{\kappa r}{1-\vert r\vert^2}\dfrac{1}{\mathbf{M}} \text{ and }  \mathcal{D}^2(\dfrac{1}{\mathbf{M}})=\dfrac{\kappa(1-\vert r\vert^2)I+\kappa(2+\kappa)r\otimes r}{(1-\vert r\vert^2)^2}\dfrac{1}{\mathbf{M}}.
\end{align*}
Finally, note that
\begin{align*}
   &- \sum_{k\in K}\int_0^t(\varphi_\varepsilon(\nabla \sigma_k^Nr).\nabla_rf_m ,f_m)_H dW^k(s)=\dfrac{1}{2}\sum_{k\in K}\int_0^t(f_m^2 ,(\nabla \sigma_k^Nr)\cdot\nabla(\varphi_\varepsilon \dfrac{1}{\mathbf{M}})) dW^k(s)\\
   &=\dfrac{1}{2}\sum_{k\in K}\int_0^t(\dfrac{f_m^2}{\mathbf{M}} ,(\nabla \sigma_k^Nr)\cdot\nabla\varphi_\varepsilon) dW^k(s)+\dfrac{1}{2}\sum_{k\in K}\int_0^t(\dfrac{f_m^2}{\mathbf{M}}, \varphi_\varepsilon(\nabla \sigma_k^Nr)\cdot \dfrac{\kappa r}{1-\vert r\vert^2} ) dW^k(s).
\end{align*}
We have
\begin{align*}
    & \sum_{k\in K} \vert(\nabla \sigma_k^Nr)\cdot\nabla\varphi_\varepsilon\vert^2 \leq \dfrac{1}{\varepsilon^2} \sum_{k\in K}\vert (\nabla \sigma_k^Nr)\vert^2
     \leq \dfrac{a_\tau^2}{\varepsilon^2} \sum_{k\in K} \dfrac{1}{\vert k\vert^2}\leq  \dfrac{ \mathbf{C}a_\tau^2}{\varepsilon^2}, \\
     & \sum_{k\in K} \vert\varphi_\varepsilon(\nabla \sigma_k^Nr)\cdot \dfrac{\kappa r}{1-\vert r\vert^2}\vert^2 \leq \dfrac{\kappa^2}{\varepsilon^2} \sum_{k\in K}\vert (\nabla \sigma_k^Nr)\vert^2
     \leq \dfrac{\kappa^2 a_\tau^2}{\varepsilon^2} \sum_{k\in K} \dfrac{1}{\vert k\vert^2}\leq  \dfrac{\mathbf{C}\kappa^2 a_\tau^2}{\varepsilon^2}
\end{align*}
since $\sum_{k\in K} \dfrac{1}{\vert k\vert^2}\leq \mathbf{C}$ and $\mathbf{C}$ is independent of $N$. Next, Burkholder-Davis-Gundy inequality ensures the existence of $C_B>0$ such that 
\begin{align*}
  &\E \sup_{v\in[0,t]}   \vert \sum_{k\in K}\int_0^v(\varphi_\varepsilon(\nabla \sigma_k^Nr).\nabla_rf_m ,f_m)_H dW^k(s)\vert \\
  &\leq  \dfrac{C_B}{2}\E\Big[\big(\sum_{k\in K}\int_0^t(\dfrac{f_m^2}{\mathbf{M}} ,(\nabla \sigma_k^Nr)\cdot\nabla\varphi_\varepsilon)^2 ds\big)^{\frac{1}{2}} \Big]+\dfrac{C_B}{2}\E\Big[(\sum_{k\in K}\int_0^t(\dfrac{f_m^2}{\mathbf{M}}, \varphi_\varepsilon(\nabla \sigma_k^Nr)\cdot \dfrac{\kappa r}{1-\vert r\vert^2} )^2 ds\big)^{\frac{1}{2}} \Big]\\
  &\leq  \dfrac{C_B\sqrt{\mathbf{C}}(\kappa+1) a_\tau}{2\varepsilon}\E\Big[(\int_0^t(\dfrac{f_m^2}{\mathbf{M}}, 1)^2 ds\big)^{\frac{1}{2}} \Big] \\ 
  &\leq \dfrac{C_B\sqrt{\mathbf{C}}(\kappa+1) a_\tau}{2\varepsilon}\E\Big[(\int_0^t\Vert f_m(s)\Vert_H^4 ds\big)^{\frac{1}{2}} \Big]\\
  &\leq  \dfrac{C_B\sqrt{\mathbf{C}}(\kappa+1) a_\tau}{2\varepsilon}\E\Big[\sup_{v\in[0,t]} \Vert f_m(v)\Vert_H (\int_0^t\Vert f_m(s)\Vert_H^2 ds\big)^{\frac{1}{2}} \Big].
\end{align*} 
Young's inequality ensures
\begin{align*}
     &\E \sup_{v\in[0,t]}   \vert \sum_{k\in K}\int_0^v(\varphi_\varepsilon(\nabla \sigma_k^Nr).\nabla_rf_m ,f_m)_H dW^k(s)\vert \\&\leq  \dfrac{C_B^2\mathbf{C}(\kappa+1)^2 a_\tau^2}{2\varepsilon^2}\E\Big[ \int_0^t\Vert f_m(s)\Vert_H^2 ds \Big]+ \dfrac{1}{4}\E\Big[\sup_{v\in[0,t]} \Vert f_m(v)\Vert_H^2  \Big].
\end{align*}
Therefore, from \eqref{ineq-estm-Galer}, we infer for any $t\in [0,T]$ \begin{align*}
 \dfrac{1}{4}\E\Big[\sup_{v\in[0,t]} \Vert f_m(v)\Vert_H^2  \Big]&+\dfrac{1}{\beta}\E\int_0^t [f_m(s)]_V^2ds\leq \dfrac{1}{2}\Vert f_{0}\Vert_H^2
 +\mathcal{K}_\varepsilon\E\Big[ \int_0^t\Vert f_m(s)\Vert_H^2 ds \Big]
\end{align*}
where $\mathcal{K}_\varepsilon=\dfrac{2C_B^2\mathbf{C}(\kappa+1)^2 a_\tau^2+\kappa (5+\kappa)C_A}{4\varepsilon^2}.$   Grönwall's lemma then ensures \eqref{key-est-Galerkin}. 
\end{proof}

We now pass to the limit $m\rightarrow 0$ constructing a weak solution satisfying (1) and (2) in \autoref{Def-sol-QR-*}.
	\subsubsection*{$1^{st} step$}
Consider the following space
    $$\mathbb{Y}:=L^2(\Omega ;L^2(0, T;L^2(\mathbb{T}^2,H^1(D;\mathbf{M}dr))) \cap  L^2(\Omega ;L^\infty(0, T;L^2(\mathbb{T}^2,L^2(D;\mathbf{M}dr)).$$
	 There exists $g_m \in \mathbb{Y}$ such that   $\mathbf{M}g_m=f_m.$ By  using   \eqref{key-est-Galerkin}, $(g_m)_m$ is bounded in $\mathbb{Y}.$ By using  Banach–Alaoglu theorem    and  diagonal extraction argument,
     there exists $$\widetilde{g} \in L^2(\Omega ;L^2(0, T;L^2(\mathbb{T}^2,H^1(D;\mathbf{M}dr)))) \cap  L^2_{w-*}(\Omega ;L^\infty(0, T;L^2(D;\mathbf{M}dr)))$$  
 such that the following   convergences  (up to a subsequence denoted by the same way) hold
	\begin{align}
		g_m \rightharpoonup \widetilde{g} &\text{		in	}  L^2(\Omega ;L^2(0, T;L^2(\mathbb{T}^2,H^1(D;\mathbf{M}dr)))),\label{cv-appro-3-g}\\
		g_m \overset{*}{\rightharpoonup} \widetilde{g} &\text{		in	}  L^2_{w-*}(\Omega ;L^\infty(0, T;L^2(\mathbb{T}^2,L^2(D;\mathbf{M}dr)))).\label{cv-appro-4-g}
	\end{align}
     By the definition of the spaces $H$ and $V$,  it holds   (up to a subsequence) 
	\begin{align}
		f_m \rightharpoonup  \mathbf{M}\widetilde{g}=\widetilde{f} &\text{		in	}  L^2(\Omega ;L^2(0, T;V)), \text{ and }
		f_m \overset{*}{\rightharpoonup} \mathbf{M}\widetilde{g}=\widetilde{f} &\text{		in	}  L^2_{w-*}(\Omega ;L^\infty(0, T;H)).\label{cv-appro-4}
	\end{align} Let  $\phi \in L^2_\mathbf{M}(D;H^1(\mathbb{T}^2)) \cap V$, note that    $(f_m(t),\phi)_H$	is adapted with    respect to  $(\mathcal{F}_t)_t$    and recall  that     the space of 
	adapted processes is a closed convex    subspace of $L^2 (\Omega \times [0,T])$, hence weakly
	closed. Therefore  $(\widetilde{f}(t),\phi)_H$ is also adapted  and  its    It\^o   integral    is  well    defined and  bounded. 
    Next, due to  linearity and 
	\eqref{cv-appro-4}, we infer that 
	\begin{align*}
		\sum_{k\in K}&\int_0^t\big(f_m(s),( \sigma_k^N.\nabla_x\phi +(\nabla \sigma_k^Nr).\nabla_r (\varphi_\varepsilon\phi)+(\nabla \sigma_k^Nr).\dfrac{\kappa r}{1-\vert r\vert^2}\varphi_\varepsilon\phi)\big)_HdW^k(s) \\&\rightharpoonup \sum_{k\in K}\int_0^t\big(\widetilde{f}(s),( \sigma_k^N.\nabla_x\phi +(\nabla \sigma_k^Nr).\nabla_r (\varphi_\varepsilon\phi)+(\nabla \sigma_k^Nr).\dfrac{\kappa r}{1-\vert r\vert^2}\varphi_\varepsilon\phi)\big)_HdW^k(s) \text{  in } L^2(\Omega\times [0,T]).\notag
\end{align*}
	\subsubsection*{$2^{nd} step$}
    	Let $\phi\in H_m$ and  $t\in [0,T]$, let us  set 
	\begin{align*}
		B_m(t) :=(f_m(t), \phi )_H  -\sum_{k\in K}\int_0^t\big(f_m(s),( \sigma_k^N.\nabla_x\phi +(\nabla \sigma_k^Nr).\nabla_r (\varphi_\varepsilon\phi)+(\nabla \sigma_k^Nr).\dfrac{\kappa r}{1-\vert r\vert^2}\varphi_\varepsilon\phi)\big)_HdW^k(s).
	\end{align*}
	From    \eqref{approx-linear*SDE},   we  write   (in distributional sense    with    respect to  $t$)
	\begin{align*}&\dfrac{d}{dt}B_m=-\sum_{k\in K}\big(\nabla_rf_m, \varphi_\varepsilon(\nabla \sigma_k^Nr)P_m\sigma_k^N.\nabla_x w_{i,l}\big)_H\\&\quad-\dfrac{1}{\beta}\big(\mathbf{M}\nabla_r(\dfrac{f_m}{\mathbf{M}}),\nabla_r(\dfrac{w_{i,l}}{\mathbf{M}})\big)+\alpha_N( f_m,\Delta_x w_{i,l})_H - \dfrac{1}{2}\big(\sum_{k\in K}\left(\varphi_\varepsilon^2(\nabla \sigma_k^Nr) \otimes (\nabla \sigma_k^Nr)\right) \nabla_rf_m,\nabla_r(\dfrac{w_{i,l}}{\mathbf{M}})\big).\notag\end{align*}
	Let $A\in \mathcal{F}$ and $\xi\in  C^\infty_c(]0,T[)$, by multiplying the    last    equation    by    $\mathbb{I}_A\xi$ and integrating  over $\Omega \times [0,T]$ we derive 
    	\begin{align}&-\int_A\int_0^T\bigl[B_m\dot\xi \bigr]dsdP=-\int_A\int_0^T\sum_{k\in K}\big(\nabla_rf_m, \varphi_\varepsilon(\nabla \sigma_k^Nr)P_m\sigma_k^N.\nabla_x w_{i,l}\big)_H \xi dsdP\notag\\&\quad-\dfrac{1}{\beta}\int_A\int_0^T\big(\mathbf{M}\nabla_r(\dfrac{f_m}{\mathbf{M}}),\nabla_r(\dfrac{w_{i,l}}{\mathbf{M}})\big) \xi dsdP+\alpha_N\int_A\int_0^T( f_m,\Delta_x w_{i,l})_H\xi dsdP\notag\\& - \dfrac{1}{2}\int_A\int_0^T\big(\sum_{k\in K}\left(\varphi_\varepsilon^2(\nabla \sigma_k^Nr) \otimes (\nabla \sigma_k^Nr)\right) \nabla_rf_m,\nabla_r(\dfrac{w_{i,l}}{\mathbf{M}})\big)\xi dsdP.\label{eq-to-pass-base-elem}\end{align}
	Now,    let us  prove the following.
	\begin{align}\label{limit-covarnaince-0}\int_A\int_0^T\sum_{k\in K}\big(\nabla_rf_m, \varphi_\varepsilon(\nabla \sigma_k^Nr)P_m\sigma_k^N.\nabla_x w_{i,l}\big)_H \xi dsdP\to0 \text{    as  }   m\to    +\infty.
	\end{align}
	For $1\leq  i,l\leq   m$,   the following   convergence holds
	\begin{align*}
		\sum_{k\in K}  (\nabla\sigma_k^Nr)P_m[\sigma_k^N\cdot\nabla_xw_{i,l} ]    \to \sum_{k\in K}  (\nabla\sigma_k^Nr)\sigma_k^N\cdot\nabla_xw_{i,l}   \text{  in  }  H. 
	\end{align*}
	Indeed, since   $P_m$   is  an  orthogonal projection  on  $H$  we  have
	\begin{align*}
		\Vert \sum_{k\in K}  (\nabla\sigma_k^Nr)(P_m-I)[\sigma_k^N\cdot\nabla_xw_{i,l} ]  \Vert_H^2&\leq    
		\sum_{k\in K} (\theta^{N,\tau}_k)^2\vert   k\vert^2 \Vert\vert   r\vert  (P_m-I)[\sigma_k^N\cdot\nabla_xw_{i,l} ]  \Vert_H^2\\   
		&\leq  \sum_{k\in K} (\theta^{N,\tau}_k)^4\vert   k\vert^2 \Vert P_m-I\Vert_{L(H,H)}^2\Vert\nabla_x w_{i,l}   \Vert_H^2\\
		&\leq  C \Vert P_m-I\Vert_{L(H,H)}^2\Vert \nabla_xw_{i,l}   \Vert_H^2 \to 0,
	\end{align*}
	where $C>0$ \footnote{
		Recall that     $\displaystyle\sum_{k\in K} (\theta^{N,\tau}_k)^4\vert   k\vert^2   = \sum_{k\in K} \dfrac{1}{\vert   k\vert^6}  \leq    C$.}.  On  the other   hand,   $f_m$   converges   weakly  to  $\widetilde f$   in $ L^2(\Omega ;L^2(0, T;V))$. Therefore
	\begin{align*}
		\lim_m        &\int_A\int_0^T\sum_{k\in K}(\nabla_rf_m(s)(\nabla\sigma_k^Nr)P_m[\sigma_k^N\cdot\nabla_x\phi ])_H \xi(s) dsdP\\
		&=\sum_{k\in K}\int_A\int_0^T(   \nabla_r\widetilde f(s),(\nabla\sigma_k^Nr)\sigma_k^N\cdot\nabla_xw_{i,l} )_H\xi(s)   dsdP\\
		&=-\int_A\int_0^T(  \widetilde f(s),\sum_{k\in K}(\nabla\sigma_k^Nr)\cdot\nabla_r[\sigma_k^N\cdot\nabla_x \dfrac{w_{i,l}}{\mathbf{M}}])\xi(s)   dsdP=0\quad   \forall i\in    \mathbb{N}.
	\end{align*}
	Indeed, for any given function $\psi$ we have  
	\begin{align*}
		\displaystyle\sum_{k\in K}	(\nabla \sigma_k^Nr).\nabla_r(\sigma_k^N.\nabla_x\psi)=\sum_{k\in K} \sum_{l,\gamma,i=1}^2 \partial_{x_\gamma}\sigma_k^ir_\gamma\partial_{r_i}(\sigma_k^l\partial_{x_l} \psi)=	\sum_{l,\gamma,i=1}^2\partial_{x_\gamma} Q_{i,l}(0)\partial_{x_l}(r_\gamma\partial_{r_i}\psi).
	\end{align*}
	Since   the covariance  matrix  $Q$ satisfies   $Q(x)=Q(-x)$ then   $\partial_{x_\gamma} Q_{i,l}(0)=0$. As a result we get
\begin{align*}&-\int_A\int_0^T\bigl[B\dot\xi \bigr]dsdP=-\dfrac{1}{\beta}\int_A\int_0^T\big(\mathbf{M}\nabla_r(\dfrac{\widetilde{f}}{\mathbf{M}}),\nabla_r\left(\dfrac{\phi}{\mathbf{M}}\right)\big) \xi dsdP+\alpha_N\int_A\int_0^T( \widetilde{f},\Delta_x \phi)_H\xi dsdP\\& - \dfrac{1}{2}\int_A\int_0^T\big(\sum_{k\in K}\left(\varphi_\varepsilon^2(\nabla \sigma_k^Nr) \otimes (\nabla \sigma_k^Nr)\right) \nabla_r\widetilde{f},\nabla_r\left(\dfrac{\phi}{\mathbf{M}}\right)\big)\xi dsdP,\quad \forall \phi \in \mathbb{X},\notag\end{align*}
 where 
        \begin{align*}
            B(t) :=(\widetilde{f}(t), \phi )_H  -\sum_{k\in K}\int_0^t\big(\widetilde{f}(s),( \sigma_k^N.\nabla_x\phi +(\nabla \sigma_k^Nr).\nabla_r (\varphi_\varepsilon\phi)+(\nabla \sigma_k^Nr).\dfrac{\kappa r}{1-\vert r\vert^2}\varphi_\varepsilon\phi)\big)_HdW^k(s).
        \end{align*}

	Then, taking into account the regularity    of $\widetilde    f$ , we infer that the distributional derivative $\dfrac{dB}{dt}$ belongs to the space $L^2(\Omega\times[0,T]).$ Recalling that $B\in L^2(\Omega\times[0,T])$, we conclude that $ B\in   L^2(\Omega;C([0,T])$.
	Considering the properties of  It\^o's integral, we deduce
	$ (\widetilde    f(\cdot), \phi)_H \in L^2(\Omega;C([0,T]),$
	which  means   that   $\widetilde    f\in  L^2(\Omega;C([0,T];\mathbb{X}^\prime)$ and   therefore 
	$\widetilde    f\in  L^2(\Omega;C_w([0,T];H),$ thanks to \eqref{cv-appro-4}    and \cite[Lemma. 1.4 p. 263]{Temam77}. We   finish  the proof   by  showing some    continuous  convergence in  time.   Indeed,
	let  $\xi \in \mathcal{C}^\infty([0,t])$ for $t\in ]0,T]$ and note that the	following	integration	by	parts	formula	holds
	\begin{align}\label{IPPtimez-1}
		\int_0^t	\dfrac{dB}{ds}(s)\xi(s) ds&=-\int_0^tB(s)\dot\xi ds+B(t)\xi(t)-\int_{\mathbb{T}^2}\int_{\mathbb{R}^2}    f_0 \phi drdx\xi(0).
	\end{align}
	Now,    by  standard   arguments  (see \textit{e.g.}  \cite[proof    of  Prop. 3.]{vallet2019well} ) we  get
	for any $t\in ]0,T]$, $$    f_m(t)  \rightharpoonup \widetilde    f(t) \text{ in } L^2(\Omega,H), \text{ as } m\to \infty.$$
	and    $\widetilde f(0)=f_0$   in  $H$-sense. 	In  conclusion,     there   exists  a   solution $f^\varepsilon_N$  ($\widetilde  f=f^\varepsilon_N$  to  stress  the dependence  $N$,    since   we  will    pass    to  the limit   as  $N\to   +\infty $   in  \autoref{Section-diffusion-limit}) satisfying
    \begin{itemize}
        \item $\widetilde{f}$ is adapted with    respect to  $(\mathcal{F}_t)_t$ and  $$\widetilde{f} \in L^2(\Omega ;L^2(0, T;V))\cap L^2_{w-*}(\Omega ;L^\infty(0, T;H)) \cap L^2(\Omega;C_w([0,T];H).$$
        \item P-a.s. for any $t\in [0,T]$, it holds
    	\begin{align*}
                    (\widetilde{f}(t), \phi )_H 
        &=(f_0, \phi )_H+\sum_{k\in K}\int_0^t\big(\widetilde{f}(s),( \sigma_k^N.\nabla_x\phi +(\nabla \sigma_k^Nr).\nabla_r (\varphi_\varepsilon\phi)+(\nabla \sigma_k^Nr).\dfrac{\kappa r}{1-\vert r\vert^2}\varphi_\varepsilon\phi)\big)_HdW^k(s)\\
        &-\dfrac{1}{\beta}\int_0^t\big(\mathbf{M}\nabla_r(\dfrac{\widetilde{f}}{\mathbf{M}}),\nabla_r\left(\dfrac{\phi}{\mathbf{M}}\right)\big) ds+\alpha_N\int_0^t( \widetilde{f},\Delta_x \phi)_H ds\\& - \dfrac{1}{2}\int_0^t\big(\sum_{k\in K}\left(\varphi_\varepsilon^2(\nabla \sigma_k^Nr) \otimes (\nabla \sigma_k^Nr)\right) \nabla_r\widetilde{f},\nabla_r\left(\dfrac{\phi}{\mathbf{M}}\right)\big) ds, \quad \forall \phi\in \mathbb{X}.\end{align*}
    \end{itemize}
    \begin{remark}
It is worth mentioning that we first  pass to the limit in \eqref{eq-to-pass-base-elem} with fixed elements of the basis, then we obtain the validity of the limit for any elements in $H^1_\mathbf{M}\otimes H^2(\T^2)$  and finally the last equality  in $\mathbb{X}^\prime$ by another approximation argument.
         \end{remark}
         \subsection{Construction of weak solutions in the restricted parameters regime}\label{sec:ex_nocutoff}
         The aim of this subsection is to prove \autoref{TH1-*_nocutoff}, namely the existence of solution for the system without cut-off, assuming some constraints on the physical parameters. Since most of the steps are the same as in the previous section we only give a sketch proving only the main a-priori estimate. We begin proving a key coercivity estimate. 
         \begin{lemma}\label{lem:coercivity_est}
             Let $\kappa/(2+ \k_T \beta)> 1$, then there exists $\eta >0$ such that if $k_T\beta \le \eta$, there exists positive constants $C$, $\theta$ such that for every $f\in V_0$ it holds 
             \begin{align}\label{coercivity-est}
    \frac{1}{\beta}\int_{\T^2 \times D} \M_0 \left| \nabla_r \left(\frac{f}{\M_0}\right)\right|_{I+ \frac{\beta}{2}A}^2 drdx - \frac{1}{2}\int_{\T^2 \times D} A \nabla_r f \cdot \frac{\nabla_r f}{\M_0}drdx \ge -C \|f\|^2_{H_0} + \theta [f]^2_{V_0}
\end{align}
         \end{lemma}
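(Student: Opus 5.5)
We split the left-hand side of \eqref{coercivity-est} into a coercive part and an error part. Write $g = f/\M_0$ and set $\gamma = k_T\beta/2$. By \eqref{matrix-limit}, $\frac{\beta}{2}A = \gamma\mathcal{A}$, and $I+\gamma\mathcal{A}$ is uniformly positive definite. Indeed $\mathcal{A}(r)$ has eigenvalues $|r|^2$ (along $r$) and $3|r|^2$, so $I+\gamma\mathcal{A} \ge I$. Hence the first term is at least $\frac1\beta\int \M_0|\nabla_r g|^2 = \frac1\beta[f]^2_{V_0}$.

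For the second term, expand $\nabla_r f = \M_0\nabla_r g + g\nabla_r\M_0$. This gives
\begin{align*}
\frac12\int A\nabla_r f\cdot\frac{\nabla_r f}{\M_0} = \frac12\int \M_0 A\nabla_r g\cdot\nabla_r g + \int A\nabla_r\M_0\cdot\nabla_r g\, g + \frac12\int g^2 \frac{A\nabla_r\M_0\cdot\nabla_r\M_0}{\M_0}.
\end{align*}
The first piece is exactly cancelled by the $\frac1\beta\cdot\frac\beta2 A$ part of the first term. What remains is
\[
\frac1\beta[f]^2_{V_0} - \int g\, A\nabla_r\M_0\cdot\nabla_r g - \frac12\int g^2\frac{A\nabla\M_0\cdot\nabla\M_0}{\M_0}.
\]
Next we use \eqref{elliptic-eq-M0} to compute $\nabla_r\M_0$ explicitly. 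Since $r$ is an eigenvector of $\mathcal{A}$, we get
\[
\nabla_r\M_0 = -\frac{\kappa r}{(1-|r|^2)(1+\gamma|r|^2)}\M_0 .
\]
Both error terms then carry the weight $(1-|r|^2)^{-1}$ or $(1-|r|^2)^{-2}$, multiplied by $k_T|r|^2$ factors coming from $A$.

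To handle the cross term, write $g\nabla_r g = \frac12\nabla_r(g^2)$ and integrate by parts in $r$. No boundary terms appear: $\kappa/(2(1+\gamma))>1$ gives $H^1_{\M_0}\subset H^1_0$, hence vanishing traces, by \autoref{lemma-properties-weight} applied with $\M_0$. After this step, both error terms are of the form $\int g^2 W(r)\M_0\,dr$. The weight satisfies
\[
|W| \le C k_T\Big(\frac{\kappa^2+\kappa}{(1-|r|^2)^2}\Big)
\]
up to a bounded remainder, with $C$ depending only on absolute constants.

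The key step, and the main obstacle, is to absorb this singular boundary term into $\frac1\beta[f]_{V_0}^2$. For this we invoke the Hardy inequality \eqref{Hardy-ineq} for the weight $\M_0\sim d_{\partial D}^{\kappa/(2(1+\gamma))}$. It holds because $\kappa/(2(1+\gamma))>1$, i.e.\ $\kappa/(2+k_T\beta)>1$, and by \autoref{rem:hardy-constant} its constant $C_H$ is uniform for $\gamma\in[0,\eta]$. This yields
\[
\int g^2 |W|\M_0 \le C k_T\kappa(\kappa+1)C_H\bigl(\|f\|_{H_0}^2+[f]_{V_0}^2\bigr).
\]
Multiplying by $\beta$, the coefficient of $[f]^2_{V_0}$ relative to $\frac1\beta$ is $C\kappa(\kappa+1)C_H\, k_T\beta$. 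Choosing $\eta$ so that this is at most $\frac12$ leaves $\theta = \frac1{2\beta}$, while the $\|f\|^2_{H_0}$ contributions give $C$.

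One has to check carefully that the delicate leading singular coefficient, $\kappa^2/(1-|r|^2)^2$ multiplied by $k_T|r|^4$, really appears with a factor $k_T$. It does, since every error term contains one $A$. If the sign of the most singular part happens to be favorable after the integration by parts, the smallness requirement only concerns lower-order terms. We would nevertheless rely on the smallness of $k_T\beta$ rather than on that sign. We first verify the identity on $f$ with $g\in C^1(\overline D)$ and then conclude by density in $V_0$.
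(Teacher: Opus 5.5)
Your proof is correct and shares the paper's overall structure. You use the same expansion, in which $\frac12\int \M_0 A\nabla_r g\cdot\nabla_r g$ cancels and one arrives at \eqref{coerc-to-est}, together with the explicit formula for $\nabla_r\M_0$ and $A\nabla_r\M_0=k_T|r|^2\nabla_r\M_0$. You also rely, as the paper does, on the Hardy inequality \eqref{Hardy-ineq} for $\M_0$ (available because $\kappa>2(1+\gamma)$) and on the smallness of $k_T\beta$.

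The genuine difference is how you treat the cross term $-\int g\,A\nabla_r\M_0\cdot\nabla_r g$. The paper bounds it by Cauchy--Schwarz, Hardy and Young, which is valid for every $f\in V_0$ as it stands. You instead integrate by parts, which costs a boundary and density argument. The reason you give for the absence of boundary terms is not the operative one. The boundary integrand is $\frac12 g^2A\nabla_r\M_0\cdot\eta=\frac12 f^2A\nabla_r\M_0\cdot\eta/\M_0^2$, and the zero trace of $f$ does not by itself control it. What makes it vanish for $g\in C^1(\overline D)$ is that $|\nabla_r\M_0|\lesssim \M_0/d_{\partial D}\sim d_{\partial D}^{\kappa/(2(1+\gamma))-1}\to 0$. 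Passing to general $f\in V_0$ then uses the density of $C^1(\overline D)$ in $\mathcal H^1_{\M_0}$. It also uses the fact that, by Hardy, every term of the identity is continuous in the $V_0$ norm.

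In exchange, your route exposes structure you did not use. Since $\Div_r(A\nabla_r\M_0)=\M_0\,\Div_r(A\nabla_r\log\M_0)+A\nabla_r\M_0\cdot\nabla_r\M_0/\M_0$, the two error terms combine exactly into
\[
\frac12\int_{\T^2\times D} g^2\,\M_0\,\Div_r\bigl(A\nabla_r\log\M_0\bigr)\,drdx,\qquad \frac12\Div_r\bigl(A\nabla_r\log\M_0\bigr)=-\frac{k_T\kappa}{(1+\gamma)\,d_{\partial D}^2}+O\bigl(d_{\partial D}^{-1}\bigr).
\]
So the $\kappa^2$ contributions cancel, and smallness is only needed for $C_H\kappa\,k_T\beta$ rather than for $C_H\kappa^2 k_T\beta$ as in the paper. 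Your cruder bound by $\kappa(\kappa+1)$ discards this, which is harmless for the lemma as stated.
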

\begin{remark}\label{rem:coercivity_fails}
Note that the above estimate trivially fails if the condition $\kappa/(2+ \k_T \beta)> 1$ is not satisfied, indeed by choosing $f=\M_0 \in V_0$, \eqref{coercivity-est}  reduces to 
$$ \frac{1}{2}\int_{\T^2 \times D} A \nabla_r \M_0 \cdot \frac{\nabla_r \M_0}{\M_0}drdx \le C'$$
and since $A$ is bounded and $\M_0 \sim (1-|r|^2)^{\frac{\kappa}{2+k_T \beta}}$ near $|r|=1$, we have that the LHS is integrable if and only if $\frac{\kappa}{2 + k_T \beta} > 1$. 
         \end{remark}
         \begin{proof}
             By expanding the first term in the LHS of \eqref{coercivity-est} we get 
\begin{align*}
     \frac{1}{\beta}\int_{\T^2 \times D} \M_0 \left| \nabla_r \left(\frac{f}{\M_0}\right)\right|_{I+ \frac{\beta}{2}A}^2 drdx  &= \frac{1}{\beta}\int_{\T^2 \times D} \M_0 \left| \nabla_r \left(\frac{f}{\M_0}\right)\right|^2 drdx \\ &+ \frac{1}{2}\int_{\T^2 \times D}  \left(A\nabla_r f - fA\frac{\nabla_r \M_0}{\M_0}\right)\cdot \left( \frac{\nabla_r f}{\M_0} - f\frac{\nabla_r \M_0}{\M_0^2}\right)drdx.
\end{align*}
Thus 
\begin{align}
     &\frac{1}{\beta}\int_{\T^2 \times D} \M_0 \left| \nabla_r \left(\frac{f}{\M_0}\right)\right|_{I+ \frac{\beta}{2}A}^2 drdx - \frac{1}{2}\int_{\T^2 \times D} A \nabla_r f \cdot \frac{\nabla_r f}{\M_0}drdx \notag\\ &= \frac{1}{\beta}\int_{\T^2 \times D} \M_0 \left| \nabla_r \left(\frac{f}{\M_0}\right)\right|^2 drdx \notag\\ &\quad- \int_{\T^2 \times D} \frac{f}{\M_0}\nabla_r f \cdot \frac{A\nabla_r\M_0}{\M_0} drdx + \frac{1}{2}\int_{\T^2 \times D} \frac{f^2}{\M_0}A\nabla_r\M_0\cdot \frac{\nabla_r\M_0 }{\M_0^2}drdx. \notag
\end{align}
By using $\nabla f/\M_0 = \nabla(f/\M_0) + f\nabla\M_0/\M_0^2 $  we get
\begin{align}
&\frac{1}{\beta}\int_{\T^2 \times D} \M_0 \left| \nabla_r \left(\frac{f}{\M_0}\right)\right|_{I+ \frac{\beta}{2}A}^2 drdx - \frac{1}{2}\int_{\T^2 \times D} A \nabla_r f \cdot \frac{\nabla_r f}{\M_0}drdx \notag\\ &= \frac{1}{\beta}\int_{\T^2 \times D} \M_0 \left| \nabla_r \left(\frac{f}{\M_0}\right)\right|^2 drdx \notag\\ &\quad - \int_{\T^2 \times D} f\nabla \left( \frac{f}{\M_0}\right)A\frac{\nabla \M_0}{\M_0}drdx - \frac{1}{2}\int_{\T^2 \times D} \frac{f^2}{\M_0}A\nabla\M_0\cdot \frac{\nabla\M_0 }{\M_0^2}drdx \label{coerc-to-est}.
\end{align}
Since $\M_0$ is radial and thanks to the form of $A$ (see \eqref{matrix-limit}), it holds $A\nabla\M_0 = k_T|r|^2 \nabla \M_0$. From now on, to lighten the notation we introduce $\gamma := \k_T \beta/2$. Next we observe that 
$$\frac{\nabla \M_0}{\M_0}= \frac{\k}{2(1+\gamma)}\left(\frac{1-|r|^2}{1+\gamma |r|^2}\right)^{-1} \left(\frac{-2r(1+\gamma)}{(1+\gamma|r|^2)^2}\right)=\frac{-\k r}{1+\gamma |r|^2}(1-|r|^2)^{-1}.$$
This means that near the boundary $|r|=1$, $|\frac{\nabla \M_0}{\M_0} |^2\sim \frac{\kappa^2}{(1+\gamma)^2} d_{\partial_D}^{-2}$. 
Let us focus on the last two terms. 
\begin{align*}
    \int_{\T^2 \times D} \frac{f^2}{\M_0}A\nabla\M_0\cdot \frac{\nabla\M_0 }{\M_0^2} drdx= \int_{\T^2 \times D} \frac{f^2}{\M_0}k_T|r|^2 \frac{|\nabla\M_0|^2 }{\M_0^2}drdx
\end{align*}
Thus, since in the interior of $D$ all the singular terms are bounded, choosing $\delta= \delta(\gamma)\in (0,1)$ such that  $$\left|\frac{\nabla \M_0}{\M_0} \right|^2\le 2\frac{\kappa^2}{(1+\gamma)^2} d_{\partial_D}^{-2} \qquad \text{ for } 1-|r|^2 \le \delta,$$
the last integral can be bounded by 
$$\int_{\T^2 \times D} \frac{f^2}{\M_0}k_T|r|^2 \frac{|\nabla\M_0|^2 }{\M_0^2}drdx \le  \frac{2k_T\k^2}{(1+\gamma)^2}\int_{\T^2\times \{1-|r|^2 \le \delta\}} \frac{1}{d^2_{{\partial_D}}}\frac{f^2}{\M_0}drdx + C_\delta\int_{\T^2 \times D} \frac{f^2}{\M_0} dxdr.$$

By (1) in \autoref{lemma-properties-weight}, which holds thanks to the assumption $\frac{\k}{2(1+\gamma)}>1$, there exists a constant $C_H$ such that 
$$\frac{2k_T\k^2}{(1+\gamma)^2}\int_{\T^2\times \{1-|r|^2 \le \delta\}} \frac{1}{d^2_{{\partial_D}}} \frac{f^2}{\M_0}drdx \le C_H \frac{2k_T\k^2}{(1+\gamma)^2} \int_{\T^2 \times D} \M_0 \left| \nabla\left(\frac{f}\M_0{}\right)\right|^2 drdx + C'\int_{\T^2 \times D} \frac{f^2}{\M_0}drdx.$$
Now we turn to the first term in \eqref{coerc-to-est}.  For any $\varepsilon>0,$ we have
\begin{align*}
    &\left|\int_{\T^2 \times D} f\nabla \left( \frac{f}{\M_0}\right)A\frac{\nabla \M_0}{\M_0}drdx \right| = \left| \int_{\T^2 \times D} f\nabla \left( \frac{f}{\M_0}\right)k_T|r|^2\frac{\nabla \M_0}{\M_0}drdx\right|\\
    &\quad\le k_T\left(\int_{\T^2 \times D} \M_0 \left|\nabla\left(\frac{f}{\M_0}\right)\right|^2drdx\right)^{1/2}\left(\int_{\T^2 \times D} \frac{f^2}{\M_0}\left|\frac{\nabla \M_0}{\M_0} \right|^2drdx\right)^{1/2}\\
    &\quad\le (\frac{\sqrt{2 C_H}k_T\k}{1+\gamma} + \varepsilon)\left(\int_{\T^2 \times D} \M_0 \left|\nabla\left(\frac{f}{\M_0}\right)\right|^2drdx\right) + C_\varepsilon\int_{\T^2 \times D} \frac{f^2}{\M_0}drdx.
\end{align*}

Where in the last step we have used the previous estimate and the Young inequality. 
Finally, putting all the terms together we arrive at 
\begin{align}
\frac{1}{\beta}\int_{\T^2 \times D} &\M_0 \left| \nabla_r \left(\frac{f}{\M_0}\right)\right|_{I+ \frac{\beta}{2}A}^2 drdx - \frac{1}{2}\int_{\T^2 \times D} A \nabla_r f \cdot \frac{\nabla_r f}{\M_0}drdx \notag\\
&{\ge \left(\frac{1}{\beta} - \frac{\sqrt{2} C_Hk_T \k}{1+\gamma} -\varepsilon -  \frac{C_Hk_T \k^2}{(1+\gamma)^2}\right) \left(\int_{\T^2 \times D} \M_0 \left|\nabla\left(\frac{f}{\M_0}\right)\right|^2drdx\right) - C\int_{\T^2 \times D} \frac{f^2}{\M_0}drdx}
\end{align}
Finally, recalling from \autoref{rem:hardy-constant} that $C_H$ can be chosen uniformly with respect to all the parameters, provided $ \k/{(2+k_T \beta)} > 1$, we get that if the parameters $\kappa, \beta, k_T$ satisfy 
\begin{equation}\label{restriction-param}
    \frac{\k}{2+k_T \beta} \ge 1, \qquad k_T\beta << 1 
\end{equation}
then {$ \frac{1}{\beta} - \frac{\sqrt{2} C_Hk_T \k}{1+\gamma} -\varepsilon -  \frac{ C_Hk_T \k^2}{(1+\gamma)^2}>0$.}
         \end{proof}
         \begin{proof}[Proof of \autoref{TH1-*_nocutoff} (Sketch)]
             By the same steps as in the proof of \autoref{sec:estimates}, constructing the Galerkin approximation with base functions defined  in terms of the spaces $H_0$ and $V_0$, we arrive at the analogous of \eqref{ineq-estm-Galer}
             \begin{align}
   \dfrac{1}{2} \Vert f_m(t)\Vert_{H_0}^2&+\dfrac{1}{\beta}\int_0^t\big(\mathbf{M_0}(I+ \frac{\beta}{2}A(r))\nabla_r(\dfrac{f_m}{\mathbf{M_0}}),\nabla_r(\dfrac{f_m}{\mathbf{M_0}})\big)ds-\dfrac{1}{2}\Vert P_mf_{0}\Vert_{H_0}^2\le\notag\\
  &- \dfrac{1}{2}\int_0^t\big(\left(A^N(r)- A(r)\right) \nabla_rf_m,\nabla_r(\dfrac{f_m}{\mathbf{M_0}})\big)ds\label{ineq-estm-Galer_nocutoff}\\
            & -\sum_{k\in K}\int_0^t((\nabla \sigma_k^Nr).\nabla_rf_m dW^k(s),f_m)_{H_0}+ \sum_{k\in K}\sum_{i,l \le m}\dfrac{1}{2}\int_0^t((\nabla \sigma_k^Nr).\nabla_rf_m,w_{i,l})_{H_0}^2 ds.\notag
\end{align}
Where we have used that for the weight $M_0$ it holds 
$$\frac{1}{\beta}\Div\left(\k rF(r)f + \nabla_r f + \frac{\beta}{2}A(r)\nabla_r f\right) = \frac{1}{\beta}\Div\left(\mathbf{M_0}(I+ \frac{\beta}{2}A(r))\nabla_r(\dfrac{f}{\mathbf{M_0}})\right).$$
Note that
\begin{align*} \sum_{k\in K}\sum_{i,l \le m}\dfrac{1}{2}\int_0^t((\nabla \sigma_k^Nr).\nabla_rf_m,w_{i,l})_{H_0}^2 ds 
&\leq  \frac{1}{2}\int_{\T^2 \times D} A^N(r) \nabla_r f_m \cdot \frac{\nabla_r f_m}{\M_0}drdx,
\end{align*}
and 
\begin{align*}
    \frac{1}{2}\int_{\T^2 \times D} A^N(r) \nabla_r f_m \cdot \frac{\nabla_r f_m}{\M_0}drdx&=\frac{1}{2}\int_{\T^2 \times D} A(r) \nabla_r f_m \cdot \frac{\nabla_r f_m}{\M_0}drdx\\&+\frac{1}{2}\int_{\T^2 \times D} (A^N(r)-A(r)) \nabla_r f_m \cdot \nabla_r(\dfrac{f_m}{\mathbf{M_0}})drdx\\
    &+\frac{1}{2}\int_{\T^2 \times D} (A^N(r)-A(r)) \nabla_r(\dfrac{f_m}{\mathbf{M_0}}) \cdot  \dfrac{\nabla_r\mathbf{M_0}}{\mathbf{M_0}}f_mdrdx\\
    &+\frac{1}{2}\int_{\T^2 \times D} (A^N(r)-A(r)) \nabla_r\mathbf{M_0} \cdot  \dfrac{\nabla_r\mathbf{M_0}}{\mathbf{M_0}^2}\dfrac{f_m^2}{\mathbf{M_0}}drdx.
\end{align*}
Now, thanks to the fact that $|A^N- A| =O(N^{-1})$, we bound $$\int_0^t\big(\left(A^N(r)- A(r)\right) \nabla_rf_m,\nabla_r(\dfrac{f_m}{\mathbf{M_0}})\big)ds \le CN^{-1} ([f_m]_{V_0}^2 + \|f_m\|_{H_0}^2)$$ by similar computations as in the proof of \autoref{lem:coercivity_est}. Thanks to  \autoref{lem:coercivity_est}, for $N$ sufficiently large, we have a constant $\beta^{-1}>\theta>0$ for which 
\begin{align}
   \dfrac{1}{2} \Vert f_m(t)\Vert_{H_0}^2&+\theta\int_0^t[f_m]_{V_0}ds-\dfrac{1}{2}\Vert P_mf_{0}\Vert_{H_0}^2\le\notag\\
            & -\sum_{k\in K}\int_0^t((\nabla \sigma_k^Nr).\nabla_rf_m dW^k(s),f_m)_{H_0} + C\int_0^t\|f_m\|_{H_0}^2 ds \notag
\end{align}
Thanks to BDG, we estimate 
\begin{align*}\mathbb{E}\left[\sup_{[0, T]} \left| -\sum_{k\in K}\int_0^t((\nabla \sigma_k^Nr).\nabla_rf_m dW^k(s),f_m)_{H_0} \right|\right] &\lesssim \mathbb{E}\left[ \left| -\sum_{k\in K}\int_0^T((\nabla \sigma_k^Nr).\nabla_rf_m ,f_m)_{H_0}^2ds \right|^{1/2}\right]\\
& \lesssim \mathbb{E}\left[\left | \int_0^T(A^N \nabla f_m, { \frac{ \nabla f_m}{\M_0}) \|f_m\|^2_{H_0}}\right|^{1/2}\right].
\end{align*}
Again by the same steps as in the proof of \autoref{lem:coercivity_est}, we can bound $$\dfrac{1}{2}\int_0^t\big(A^N(r) \nabla_rf_m,\dfrac{\nabla_rf_m}{\mathbf{M_0}}\big)ds \le C(k_T, \beta)([f_m]_{V_0}^2 + \|f_m\|_{H_0}^2),$$ where $C(k_T, \beta)$ is some constant (which is allowed to change line by line in the following computations) with the property that $\beta C(k_T, \beta) \rightarrow 0$ as $k_T\beta \rightarrow 0$ (in particular $C(k_T, \beta) \le \theta$ for $k_T \beta$ sufficiently small), thus 
\begin{align*}&\mathbb{E}\left[\sup_{[0, T]} \left| \sum_{k\in K}\int_0^t((\nabla \sigma_k^Nr).\nabla_rf_m dW^k(s),f_m)_{H_0} \right|\right]\\ &\le C(k_T, \beta)\mathbb{E}\left[\left | \int_0^T([f_m]_{V_0}^2 + \|f_m\|_{H_0}^2) \|f_m\|^2_{H_0}\right|^{1/2}\right] \\
&\le \mathbb{E}\left[ \frac{1}{4}\sup_{[0, T]}\|f_m\|^2_{H_0} + C(k_T, \beta)\left | \int_0^T([f_m]_{V_0}^2 + \|f_m\|_{H_0}^2) \right|\right]
\end{align*}
Putting all together, for $k_T\beta$ sufficiently small, we arrive at 
\begin{align*}
    \mathbb{E}\left[\sup_{[0, T]}\|f_m\|^2_{H_0} +  \int_0^T[f_m]_{V_0}^2  dt\right] \le  \mathbb{E}\left[ \|P_mf_0\|^2_{H_0} + C \int_0^T\|f_m\|_{H_0}^2 dt\right].
\end{align*}
From which we obtain by Gr\"onwall's  lemma $f_m \in L^2(\Omega; L^2(0, T; V_0))\cap L^2_{w-*}(\Omega; L^\infty (0, T; H_0))$ uniformly in $m>0,$ which allows to conclude the proof with analogous steps as in the proof of \autoref{TH1-*}. 
         \end{proof}
    \subsection{Quasi-regular weak solution and uniqueness}\label{sect-quai-regu-uni}
        In order to complete the proof of \autoref{TH1-*} are left to  prove  (3) in \Cref{Def-sol-QR-*}   and the  uniqueness  of ``\textit{quasi-regular   weak    solution}''    to  \eqref{Ito-FP_eps-V2},   we  need    first   to  prove   the existence   and uniqueness  of  solution   $V^\varepsilon_N$ to  an  appropriate mean    equation.   Namely, we  prove   the following. 
	\begin{proposition}\label[proposition]{prop-exisence-mean}
		For   any $t\in[0,T],$   there   exists  $V^\varepsilon_N(t)=\E[  f^\varepsilon_N(t)e_g(t)]$  such    that
		\begin{enumerate}
			\item $V^\varepsilon_N \in  L^\infty(0, T;H)\cap     L^2(0, T;V)$  and $V^\varepsilon_N\in  C_w([0,T];H)$.
			\item   For any $t\in[0,T],$    it  holds
			\begin{align*}
				&( V^\varepsilon_N(t),\phi)_H=(f_0,\phi)_H+\int_0^t \left(V^\varepsilon_N(s), h_n\cdot \nabla_x \phi + y_n \cdot\nabla_r (\phi\varphi_\varepsilon)+y_n \cdot \dfrac{\kappa r}{1-\vert r\vert^2} (\phi\varphi_\varepsilon) \right)_Hds \\
				&        -\dfrac{1}{\beta}\int_0^t\big(\mathbf{M}\nabla_r(\dfrac{V^\varepsilon_N}{\mathbf{M}}),\nabla_r\left(\dfrac{\phi}{\mathbf{M}}\right)\big) ds+\alpha_N\int_0^t( V^\varepsilon_N,\Delta_x \phi)_H ds\\& - \dfrac{1}{2}\int_0^t\big(\sum_{k\in K}\left(\varphi_\varepsilon^2(\nabla \sigma_k^Nr) \otimes (\nabla \sigma_k^Nr)\right) \nabla_rV^\varepsilon_N,\nabla_r\left(\dfrac{\phi}{\mathbf{M}}\right)\big) ds, \quad \forall \phi\in \mathbb{X},
			\end{align*}
            where 	 $\displaystyle\sum_{k\in K_n}g_k\sigma_k^N=h_n$    and $\displaystyle\sum_{k\in K_n}g_k (\nabla \sigma_k^Nr)=y_n$ and  $K_n=\{k\in K: \min(n,N) \leq    \vert   k\vert  \leq    \max(2N,n)\}$.
		\end{enumerate}
	\end{proposition}
    \begin{proof}[Proof of  \Cref{prop-exisence-mean}]
	Let $g\in   G_n$,   denote  $K_n=\{k\in K: \min(n,N) \leq    \vert   k\vert  \leq    \max(2N,n)\}$  and set $V_m(t)=\E(f_m(t)e_g(t)).$      
    From \eqref{approx-linear*SDE},   by  using   It\^o formula to the
	product and taking the expectation,    we  get 
    \begin{align}\label{approx-linear*SDE-mean}
    (V_m(t),w_{i,l})_H&-(f_{0},w_{i,l})_H=\int_0^t(\mathcal{Y}^mV_m,w_{i,l})_Hds, \quad 1\leq j,k \leq m \notag\\
    &	-\int_0^t\dfrac{1}{\beta}\big(\mathbf{M}\nabla_r(\dfrac{V_m}{\mathbf{M}}),\nabla_r(\dfrac{w_{i,l}}{\mathbf{M}})\big)ds -\alpha_N\int_0^t(\nabla_x V_m,\nabla_x w_{i,l})_Hds \notag\\&- \dfrac{1}{2}\int_0^t\big(\sum_{k\in K}\left(\varphi_\varepsilon^2(\nabla \sigma_k^Nr) \otimes (\nabla \sigma_k^Nr)\right) \nabla_rV_m,\nabla(\dfrac{w_{i,l}}{\mathbf{M}})\big)ds\\
            &-\sum_{k\in K_n }\int_0^t(g_k\sigma_k^N.\nabla_xV_m,w_{i,l})_H ds -\sum_{k\in K_n}\int_0^t(g_k\varphi_\varepsilon(\nabla \sigma_k^Nr).\nabla_rV_m,w_{i,l})_H ds,
 \notag
\end{align}
	Denote    $\displaystyle\sum_{k\in K_n}g_k\sigma_k^N=h_n$    and $\displaystyle\sum_{k\in K_n}g_k (\nabla \sigma_k^Nr)=y_n$.   Note that
	\eqref{approx-linear*SDE-mean}  is  linear  system  of  ODE,    by  using   a   classical   results  (see    e.g.    \cite[Chapter   V]{Oksendal2013})  we  get the    existence of   a   unique  $V_m\in C([0,T];H_m)$  to   \eqref{approx-linear*SDE-mean}.
    By  noticing  that  
		$\Div_x(h_n)=0$ and     $$
		\vert  y_n\vert      \lesssim_{N,a_\tau}   \vert   r\vert    \displaystyle(\sum_{    \vert   k\vert  \leq    n}\vert   g_k\vert^2)^{\frac{1}{2}}=\vert   r\vert  \Vert  g\Vert
		,$$  one obtains from \eqref{approx-linear*SDE-mean}    (by using  similar arguments to get \eqref{key-est-Galerkin})
        \begin{align}\label{key-est-Galerkin-Vm}
 \sup_{v\in[0,t]} \Vert V_m(v)\Vert_H^2 &+\dfrac{1}{\beta}\E\int_0^t [V_m(s)]_V^2ds\leq C_{\varepsilon,T}\Vert f_{0}\Vert_H^2 \text{ for all } t\in [0,T],
\end{align}
where $C_{\varepsilon,T}>0,$  independent of $m.$ Finally, we have proven \autoref{prop-exisence-mean}.
\end{proof}

\begin{lemma}\label{lem-uniq-Vn}
		Let   $\varepsilon>0, N\in   \mathbb{N}^*$.   Then, the   solution  $V^\varepsilon_N$ given by \autoref{prop-exisence-mean}    is  unique.
\end{lemma}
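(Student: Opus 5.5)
By linearity it suffices to show that a solution $V$ of the mean equation in \autoref{prop-exisence-mean} with $f_0=0$ vanishes identically. Fix such a $V\in L^\infty(0,T;H)\cap L^2(0,T;V)\cap C_w([0,T];H)$. The plan is an energy estimate in $H$, i.e.\ testing the equation with $V$ itself, followed by Grönwall.

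\textbf{Step 1: justifying the energy identity.} The weak formulation only holds against test functions in $\mathbb{X}$, and $V$ is not a priori in $\mathbb{X}$. I would first check that the right-hand side defines an element of $L^2(0,T;V')$, so that $\partial_t V\in L^2(0,T;V')$.
\begin{itemize}
\item The term $(V, h_n\cdot\nabla_x\phi)_H$ with $\Div_x h_n=0$ is controlled by $\|\nabla_x V\|$. This requires some regularity in $x$, which $V$ does not have. I would therefore first regularize in $x$ by convolution with a mollifier $\rho_\delta$ on $\T^2$; the operators in $r$ commute with this.
\item The stretching drift $y_n\cdot\nabla_r(\phi\varphi_\varepsilon)+y_n\cdot\frac{\kappa r}{1-|r|^2}\phi\varphi_\varepsilon$ is harmless. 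The cut-off makes $\varphi_\varepsilon\frac{1}{1-|r|^2}\le 1/\varepsilon$, and $|y_n|\lesssim |r|\,\|g\|$ with $g_k\in L^2(0,T)$.
\end{itemize}
Having $\partial_t V_\delta\in L^2(0,T;V')$, where $V_\delta=\rho_\delta *_x V$, the Lions--Magenes lemma in the triple $V\hookrightarrow H\hookrightarrow V'$ gives $V_\delta\in C([0,T];H)$ and the identity $\frac12\frac{d}{dt}\|V_\delta\|_H^2=\langle\partial_t V_\delta,V_\delta\rangle$.

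\textbf{Step 2: estimating the terms with $\phi=V_\delta$.}
\begin{itemize}
\item The transport term satisfies $(V_\delta,h_n\cdot\nabla_x V_\delta)_H=0$ since $\Div_x h_n=0$ and $\M$ does not depend on $x$. The commutator $[\rho_\delta*,h_n\cdot\nabla_x]V$ is the standard DiPerna--Lions one; it tends to $0$ in $L^1(0,T;L^2)$ because $h_n$ is smooth in $x$ and only $L^2$ in time.
\item The term $-\frac1\beta[V_\delta]_V^2$ is dissipative, and $-\alpha_N\|\nabla_xV_\delta\|^2\le 0$.
\item For the stretching drift I write $V_\delta\, y_n\cdot\nabla_r(\varphi_\varepsilon V_\delta/\M)\cdot\M$ and split it with Young. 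This is bounded by $\frac{1}{4\beta}[V_\delta]_V^2+C_{\varepsilon,\beta}\|g(s)\|^2\|V_\delta\|_H^2$.
\item For the It\^o corrector I repeat the computation \eqref{esti-cov-uniq}. Writing $\nabla_r V=\M\nabla_r(V/\M)+V\nabla_r\M/\M$, the term $-\frac12\int\varphi_\varepsilon^2A^N\nabla_rV\cdot\nabla_r(V/\M)$ splits into a nonpositive quadratic part plus $-\frac14\int\varphi_\varepsilon^2 A^N\nabla_r V^2\cdot\nabla(1/\M)$. After integrating by parts, the latter is $\le\frac{\kappa(5+\kappa)C_A}{4\varepsilon^2}\|V\|_H^2$. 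There is no boundary contribution since $\varphi_\varepsilon=0$ near $\partial D$.
\end{itemize}
Putting these together gives
\[
\|V_\delta(t)\|_H^2\le \int_0^t C(1+\|g(s)\|^2)\|V_\delta\|_H^2\,ds+o_\delta(1).
\]

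\textbf{Step 3: conclusion.} Letting $\delta\to0$ and applying Grönwall, which is valid since $\|g\|^2\in L^1(0,T)$, gives $V\equiv0$.

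The main obstacle I expect is Step 1: making the test function $V$ admissible, given the lack of $x$-regularity and the weighted structure near $\partial D$. The cut-off $\varphi_\varepsilon$ is exactly what keeps the singular weights $1/(1-|r|^2)$ bounded, so that all terms stay in $V'$. The $x$-mollification plus the commutator lemma handles the transport by $h_n$.
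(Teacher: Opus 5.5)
Your proposal is correct and follows essentially the paper's route: reduce to $f_0=0$, mollify in $x$, test with the mollified solution, remove the $h_n$-transport term with a commutator lemma, bound the It\^o-corrector term as in \eqref{esti-cov-uniq}, bound the stretching drift with an $\varepsilon$-dependent constant, and close with Gr\"onwall. Your Young-inequality bound with $\|g\|^2\in L^1(0,T)$ works as well as the paper's bound that is linear in $\|g\|$, and your Lions--Magenes justification of the energy identity supplies a step the paper leaves implicit.

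One small correction: $y_n=\sum_{k\in K_n}g_k(\nabla\sigma_k^N r)$ depends on $x$, so the stretching drift does not commute with $\rho_\delta*$. The resulting extra commutator $\rho_\delta*(y_nV)-y_nV_\delta$ tends to $0$ in $L^2(0,T;H)$ and is absorbed in your $o_\delta(1)$. Alternatively, let $\delta\to0$ before estimating that term, as the paper does.
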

\begin{proof}
       Since the equation is linear with respect to $V^\varepsilon_N,$ it is sufficient to prove that $V^\varepsilon_N(t)\equiv 0$ for any $t\in ]0,T]$ if $f_0\equiv 0$. Let $\gamma>0$    and      $\rho$ be   a smooth density of a probability measure on
$\R^2$, compactly supported in $B(0, 1)$    and define  the approximation of identity for the convolution on $\R^2$ as  $\rho_\gamma(y)=\gamma^{-2}\rho(y/\gamma).$ 
	 Now, let  $\phi\in \mathbb{X}$, if  we  denote  $X=(x,r)\in \T^2\times  D$   and $\rho_{\gamma}(X)=\rho_\gamma(x)$,
	then    $\phi_{\gamma}:=\rho_{\gamma}*\phi \in \mathbb{X}$\footnote{  Since   we  are working on  $\T^2\times D,$  we  recall  that    for any  integrable function $g$    on $\T^2$, $g$ can be extended
periodically to a locally integrable function on the whole $\R^2$   and
convolution $\rho_\gamma * g$ is    meaningful   and  $\rho_\gamma * g$ is still a $C^\infty$-periodic function. }, from 
    \autoref{prop-exisence-mean} we get,  for any $t\in[0,T],$    
			\begin{align*}
				&( [V^\varepsilon_N(t)]_\gamma,\phi)_H=-\int_0^t \left( [h_n\cdot \nabla_xV^\varepsilon_N(s)]_\gamma+\varphi_\varepsilon[y_n \cdot\nabla_rV^\varepsilon_N(s)]_\gamma,  \phi \right)_Hds \\
				&        -\dfrac{1}{\beta}\int_0^t\big(\mathbf{M}\nabla_r(\dfrac{[V^\varepsilon_N(s)]_\gamma}{\mathbf{M}}),\nabla_r\left(\dfrac{\phi}{\mathbf{M}}\right)\big) ds-\alpha_N\int_0^t( \nabla_x[V^\varepsilon_N(s)]_\gamma,\nabla_x \phi)_H ds\\& - \dfrac{1}{2}\int_0^t\big(\sum_{k\in K}[\left(\varphi_\varepsilon^2(\nabla \sigma_k^Nr) \otimes (\nabla \sigma_k^Nr)\right) \nabla_rV^\varepsilon_N(s)]_\gamma,\nabla_r\left(\dfrac{\phi}{\mathbf{M}}\right)\big) ds.
			\end{align*}
            Next, $\phi=[V^\varepsilon_N]_\gamma$ is a suitable test function in the last equality and we obtain
            \begin{align*}
			&\Vert[V^\varepsilon_N(t)]_\gamma\Vert^2_H+\dfrac{1}{\beta}\int_0^t[[V^\varepsilon_N(s)]_\gamma]^2_V ds+\alpha_N\int_0^t\Vert  \nabla_x[V^\varepsilon_N(s)]_\gamma\Vert_H^2 ds\\&=\int_0^t \left( h_n\cdot \nabla_x[V^\varepsilon_N(s)]_\gamma-[h_n\cdot \nabla_xV^\varepsilon_N(s)]_\gamma,  [V^\varepsilon_N(s)]_\gamma \right)_Hds \\
            &-\int_0^t \left(\varphi_\varepsilon[y_n \cdot\nabla_rV^\varepsilon_N(s)]_\gamma,  [V^\varepsilon_N(s)]_\gamma \right)_Hds - \dfrac{1}{2}\int_0^t\big(\sum_{k\in K}[\left(\varphi_\varepsilon^2(\nabla \sigma_k^Nr) \otimes (\nabla \sigma_k^Nr)\right) \nabla_rV^\varepsilon_N]_\gamma,\nabla_r(\dfrac{[V^\varepsilon_N(s)]_\gamma}{\mathbf{M}})\big) ds,
			\end{align*}
            where we used that $\Div_x h_n=0.$  Since  $V^\varepsilon_N\in L^\infty(0, T;H)$, we get by using  the commutator estimates   (see \textit{e.g.} \cite[Appendix C]{tahraoui2025small})
            \begin{align*}
                \int_0^t \left( h_n\cdot \nabla_x[V^\varepsilon_N(s)]_\gamma-[h_n\cdot \nabla_xV^\varepsilon_N(s)]_\gamma,  [V^\varepsilon_N(s)]_\gamma \right)_Hds \to 0 \text{ as } \gamma \to 0.
            \end{align*}
            On the other hand, by using that $V^\varepsilon_N\in L^2(0, T;V)\cap L^\infty(0, T;H)$ and the properties of the convolution, we obtain as $\gamma \to 0$
             \begin{align*}
			&\Vert V^\varepsilon_N(t)\Vert^2_H+\dfrac{1}{\beta}\int_0^t[V^\varepsilon_N(s)]^2_V ds\leq 
            -\int_0^t \left(\varphi_\varepsilon y_n \cdot\nabla_rV^\varepsilon_N(s),  V^\varepsilon_N(s) \right)_Hds\\& - \dfrac{1}{2}\int_0^t\big(\sum_{k\in K}\left(\varphi_\varepsilon^2(\nabla \sigma_k^Nr) \otimes (\nabla \sigma_k^Nr)\right) \nabla_rV^\varepsilon_N,\nabla_r(\dfrac{V^\varepsilon_N(s)}{\mathbf{M}})\big) ds,
			\end{align*}
            Argument similar to \eqref{esti-cov-uniq} ensures
            \begin{align*}
                & - \dfrac{1}{2}\int_0^t\big(\sum_{k\in K}\left(\varphi_\varepsilon^2(\nabla \sigma_k^Nr) \otimes (\nabla \sigma_k^Nr)\right) \nabla_rV^\varepsilon_N,\nabla_r(\dfrac{V^\varepsilon_N(s)}{\mathbf{M}})\big) ds \\&\leq  - \dfrac{1}{2}\int_0^t\big(\sum_{k\in K}\Vert \varphi_\varepsilon (\nabla \sigma_k^Nr)\cdot\nabla_rV^\varepsilon_N\Vert_H^2ds+\dfrac{\kappa (5+\kappa)C_A}{4\varepsilon^2}\int_0^t\Vert V^\varepsilon_N(s)\Vert_H^2 ds.
            \end{align*}
        A standard computation gives
       \begin{align*}
         \left\vert\int_0^t \left(\varphi_\varepsilon y_n \cdot\nabla_rV^\varepsilon_N(s),  V^\varepsilon_N(s) \right)_Hds  \right\vert \leq \dfrac{C(a_\tau)}{\varepsilon}\int_0^t (\sum_{    \vert   k\vert  \leq    n}\vert   g_k\vert^2)^{\frac{1}{2}}\Vert V^\varepsilon_N(s)\Vert_H^2 ds, 
       \end{align*}
       with $\mathrm{g}=(\sum_{    \vert   k\vert  \leq    n}\vert   g_k\vert^2)^{\frac{1}{2}}\in L^2(0,T)$. Finally, we get
         \begin{align*}
			&\Vert V^\varepsilon_N(t)\Vert^2_H+\dfrac{1}{\beta}\int_0^t[V^\varepsilon_N(s)]^2_V ds\leq \dfrac{C(a_\tau)}{\varepsilon}\int_0^t \mathrm{g}(s)\Vert V^\varepsilon_N(s)\Vert_H^2 ds +\dfrac{\kappa (5+\kappa)C_A}{4\varepsilon^2}\int_0^t\Vert V^\varepsilon_N(s)\Vert_H^2 ds.
			\end{align*}
          Grönwall's inequality ensures $\Vert V^\varepsilon_N(t)\Vert^2_H,$ which concludes the proof of  \autoref{lem-uniq-Vn}.
    \end{proof}
    Consequently, the  uniqueness  of    quasi-regular
		weak solutions  of  \eqref{Ito-FP_eps-V2} follows by the argument as in \cite[Subsection 6.3.1]{Flandoli-Tahraoui}.
\begin{remark}
In the setting of \autoref{TH1-*_nocutoff}, employing the same arguments as in the proof of \autoref{TH1-*_nocutoff}, it follows that the same proof provides \autoref{prop-exisence-mean} also with  $\varphi_\varepsilon\equiv 1$.
\end{remark}

\section{Diffusion scaling limit  as  $N\uparrow +\infty$ (\autoref{THm-scal-eps})}\label{Section-diffusion-limit}
 Our aim in  this    section is  to  show    that    the unique  solution    of   stochastic  FP  equation    \eqref{Ito-FP_eps-V2},    in  the sense   of  \Cref{Def-sol-QR-*}  converges, as $N\to +\infty$ for fixed $\varepsilon>0,$  to  the unique   solution    of \eqref{limit-PDE-eps-finalV1}. This shows that under the scaling of the noise given by  \eqref{scaling}, the stochastic equation \eqref{Ito-FP_eps-V2} converges to the deterministic equation \eqref{limit-PDE-eps-finalV1}, where the  matrix $A$ emerges.
 \subsection{Proof of \autoref{THm-scal-eps}}\label{sec:limN}
 We recall that under   the    scaling of  the noise coefficients \eqref{scaling},  we have  
  \begin{align}\label{coefficient}    \lim_{N\to +\infty}\sum_{k\in K_{++}}\left(  \theta^{N,\tau}_{k}\right)^{2}=0;\quad 
 \lim_{N\to +\infty}\sup_{k\in K}\left(  \theta^{N,\tau}_{k}\right)^{2}= 0 \text{ and }
 \lim_{N\to +\infty}\sup_{k\in K}\left\vert k\right\vert
 ^{2}\left(  \theta^{N,\tau}_{k}\right)^{2}= 0.
 \end{align}
Let $\varepsilon>0,$ by using \eqref{estimate-solution}, we have 
$(f^\varepsilon_N)_N$ is bounded in $L^2(\Omega ;L^2(0, T;V)) \cap L^2(\Omega ;L^\infty(0, T;H)).$  By similar argument to get  \eqref{cv-appro-4},  there exists    a subsequence of $(f^\varepsilon_N)_N$ (denoted by the same way) and  $f_\varepsilon \in L^2(\Omega ;L^2(0, T;V)) \cap L^2_{w-*}(\Omega ;L^\infty(0, T;H)) $ such that
      \begin{align}
 	f^\varepsilon_N \rightharpoonup f_\varepsilon &\text{		in	}  L^2(\Omega ;L^2(0, T;V)),\label{cv1-eps}\\
    f^\varepsilon_N \overset{*}{\rightharpoonup} f_\varepsilon &\text{		in	}  L^2_{w-*}(\Omega ;L^\infty(0, T;H)) \label{cv2-eps}
 	\end{align}
 
  \subsubsection{Passage to the limit as $N\to+\infty$}
   In the following,   we  will    establish   some    lemmas  to  pass    to  the limit   as  $N\to   +\infty.$\\
   
   Let $\widetilde{\phi}\in C^2(\T^2)\otimes C^1(\overline{D})$ and set $\phi=\mathbf{M}\widetilde{\phi}.$ Then it is easy to check that $\phi   \in \mathbb{X}$. Moreover,      let    $\xi\in  C^\infty_c(]0,T[)$ and $A\in \mathcal{F}$.
   For  $t\in [0,T]$, let us  set 
        \begin{align}\label{time-regu}
            B_N^\varepsilon(t) &:=(f_N^\varepsilon(t), \phi )_H \\& -\sum_{k\in K}\int_0^t\big(f_N^\varepsilon(s),( \sigma_k^N.\nabla_x\phi +(\nabla \sigma_k^Nr).\nabla_r (\varphi_\varepsilon\phi)+(\nabla \sigma_k^Nr).\dfrac{\kappa r}{1-\vert r\vert^2}\varphi_\varepsilon\phi)\big)_HdW^k(s).\notag
        \end{align}
		From    \Cref{Def-sol-QR-*}, point (2),   we  write   (in distributional sense    with    respect to  $t$)
	\begin{align}\dfrac{d}{dt}B_N^\varepsilon=&-\dfrac{1}{\beta}\big(\mathbf{M}\nabla_r(\dfrac{f^\varepsilon_N}{\mathbf{M}}),\nabla_r\left(\dfrac{\phi}{\mathbf{M}}\right)\big)+\alpha_N(f_N^\varepsilon ,\Delta_x\phi )_H \notag\\&- \dfrac{1}{2}\big(\sum_{k\in K}\left(\varphi_\varepsilon^2(\nabla \sigma_k^Nr) \otimes (\nabla \sigma_k^Nr)\right) \nabla_rf^\varepsilon_N,\nabla_r\left(\dfrac{\phi}{\mathbf{M}}\right)\big).\label{dist-limit}\end{align}
Since  P a.s.  $f^\varepsilon_N \in C_w([0, T];H)$, we also have $B_N^\varepsilon(0) =(f_0, \phi)_H.$
By multiplying the      equation \eqref{dist-limit}    by    $\mathbb{I}_A\xi$ and integrating  over $\Omega \times [0,T]$ we derive 
	\begin{align}
		-\int_A\int_0^T\bigl[B_N^\varepsilon\dot\xi \bigr]dsdP&=\int_A\int_0^T\big(-\dfrac{1}{\beta}\big(\mathbf{M}\nabla_r(\dfrac{f^\varepsilon_N}{\mathbf{M}}),\nabla_r\left(\dfrac{\phi}{\mathbf{M}}\right)\big)+\alpha_N(f_N^\varepsilon ,\Delta_x\phi )_H \big) \xi dsdP \label{cv1-N}\\&-\dfrac{1}{2}\int_A\int_0^T\big(\sum_{k\in K}\left(\varphi_\varepsilon^2(\nabla \sigma_k^Nr) \otimes (\nabla \sigma_k^Nr)\right) \nabla_rf^\varepsilon_N,\nabla_r\left(\dfrac{\phi}{\mathbf{M}}\right))\xi dsdP.	\notag\end{align}
        Now, we pass to the limit as $N\to +\infty$ in the last equality \eqref{cv1-N}. The following result concerns the limit of the right hand side of the equality \eqref{cv1-N}.
        	\begin{lemma}\label[lemma]{lem-cv-2}
		The   following  convergences hold:  
        \begin{align}
        & \lim_N\dfrac{1}{\beta}\int_A\int_0^T\left(f_N^\varepsilon, \phi \right)_V \xi dsdP= \dfrac{1}{\beta}\int_A\int_0^T\left(f^\varepsilon, \phi \right)_V \xi dsdP\label{cv-N-C}\\
        & \displaystyle\lim_N\alpha_N\int_A\int_0^T(f_N^\varepsilon ,\Delta_x\phi )_H \xi dsdP=0 \label{cv-N-A},\\ &\lim_N\int_A\int_0^T\big(\varphi_\varepsilon^2A^N(r)\nabla_rf^\varepsilon_N,\nabla_r\left(\dfrac{\phi}{\mathbf{M}}\right))\xi dsdP =\int_A\int_0^T\big(\varphi_\varepsilon^2A(r)\nabla_rf_\varepsilon,\nabla_r\left(\dfrac{\phi}{\mathbf{M}}\right))\xi dsdP,\label{cv-N-B}
        \end{align}
        where    	$A(r)
		=\dfrac{\lambda}{\tau} (3 \left\vert r\right\vert^2I-2r\otimes r).$
	\end{lemma}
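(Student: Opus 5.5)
The three convergences in Lemma \ref{lem-cv-2} rest on two facts. The weak convergences \eqref{cv1-eps}--\eqref{cv2-eps} of $f^\varepsilon_N$ hold in $L^2(\Omega;L^2(0,T;V))$ and in $L^2_{w-*}(\Omega;L^\infty(0,T;H))$. And the coefficients converge strongly: $\alpha_N\to0$ and $A^N\to A$ uniformly on $D$ by \eqref{limit-A-N}. I treat the three limits separately, since each pairs a weakly convergent sequence with a fixed, or strongly convergent, test object. Throughout, $\phi=\mathbf{M}\widetilde\phi$ with $\widetilde\phi\in C^2(\T^2)\otimes C^1(\overline D)$, so that $\phi/\mathbf{M}=\widetilde\phi$ has bounded $r$-gradient. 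Also $\mathbb{I}_A\xi\in L^\infty(\Omega\times[0,T])$.

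\textbf{Limit \eqref{cv-N-C}.} Write
\[
\int_A\int_0^T (f^\varepsilon_N,\phi)_V\,\xi\,ds\,dP=\E\int_0^T\big(f^\varepsilon_N,\ \mathbb{I}_A\xi\,\phi\big)_V\,ds .
\]
The map $g\mapsto \E\int_0^T (g,\mathbb{I}_A\xi\phi)_V\,ds$ is a continuous linear functional on $L^2(\Omega;L^2(0,T;V))$, because $\phi\in V$. So \eqref{cv1-eps} gives the claim directly.

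\textbf{Limit \eqref{cv-N-A}.} By Cauchy--Schwarz,
\[
\Big|\alpha_N\int_A\int_0^T (f^\varepsilon_N,\Delta_x\phi)_H\,\xi\Big|\le \alpha_N\|\xi\|_\infty\,\|\Delta_x\phi\|_H\,\E\int_0^T\|f^\varepsilon_N\|_H\,ds .
\]
The last factor is bounded uniformly in $N$ by \eqref{estimate-solution}. Since $\alpha_N\to0$ by Lemma \ref{lem-matrix-r} and \eqref{coefficient}, the right-hand side tends to $0$.

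\textbf{Limit \eqref{cv-N-B}.} Split $A^N=A+(A^N-A)$. For the main part, the functional
\[
g\mapsto \E\int_0^T \mathbb{I}_A\xi \int_{\T^2\times D}\varphi_\varepsilon^2 A(r)\nabla_r g\cdot\nabla_r\widetilde\phi\,dr\,dx\,ds
\]
must be shown continuous on $L^2(\Omega;L^2(0,T;V))$. I would write $\nabla_r g=\mathbf{M}\nabla_r(g/\mathbf{M})+(g/\mathbf{M})\nabla_r\mathbf{M}$. The first piece is controlled by $\sqrt{\mathbf{M}}\,|\nabla_r(g/\mathbf{M})|\in L^2$, using that $\sqrt{\mathbf{M}}$ is bounded. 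For the second piece, $\varphi_\varepsilon^2\,|\nabla_r\mathbf{M}|/\sqrt{\mathbf{M}}=\varphi_\varepsilon^2\,\kappa|r|F(r)\sqrt{\mathbf{M}}\le C/\varepsilon$, because $\varphi_\varepsilon$ vanishes where $1-|r|\le\varepsilon$. So this piece is controlled by $\|g\|_H$. Hence \eqref{cv1-eps} passes to the limit in the main part. For the remainder, \eqref{limit-A-N} gives $\sup_{r\in D}|A^N(r)-A(r)|\le C\tau^{-1}N^{-1}$. The same splitting then bounds the remainder by
\[
CN^{-1}\varepsilon^{-1}\Big(\E\int_0^T\big([f^\varepsilon_N]_V^2+\|f^\varepsilon_N\|_H^2\big)\,ds\Big)^{1/2},
\]
which tends to $0$ by the uniform estimate \eqref{estimate-solution}.

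The main obstacle is the weighted bookkeeping in the third limit: $\nabla_r f$ itself is not directly controlled in $V$. The cut-off $\varphi_\varepsilon$ is exactly what makes $\nabla_r\mathbf{M}/\mathbf{M}$ bounded on its support, and this is the only place $\varepsilon>0$ is used. Without the cut-off, one would instead invoke the Hardy inequality \eqref{Hardy-ineq}.
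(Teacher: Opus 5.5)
Your proof is correct and follows the paper's argument. You get \eqref{cv-N-C} directly from \eqref{cv1-eps}, \eqref{cv-N-A} from Cauchy--Schwarz together with $\alpha_N\to0$, and \eqref{cv-N-B} by splitting $A^N=A+(A^N-A)$ via \eqref{limit-A-N} and bounding the remainder by $C_\varepsilon(\tau N)^{-1}$ times the $N$-uniform bound \eqref{estimate-solution}. Your use of the cut-off to bound $\varphi_\varepsilon^2|\nabla_r\mathbf{M}|/\sqrt{\mathbf{M}}\le C/\varepsilon$ only spells out two things the paper leaves implicit: where its constant $C_\varepsilon$ comes from, and why the main term is a continuous linear functional on $L^2(\Omega;L^2(0,T;V))$.
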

\begin{proof} 	
First, notice that \eqref{cv-N-C} is a consequence of \eqref{cv1-eps}. On the other hand, 	we have \begin{align*}
			\vert\alpha_N\int_A\int_0^T(f_N^\varepsilon, \Delta_x\phi)_H \xi  dsdP\vert
			&\leq C\alpha_N\Vert f_N^\varepsilon\Vert_{L^2(\Omega\times[0, T];H))}\Vert \Delta_x\phi\Vert_{H}\Vert \xi\Vert_{\infty}\\&\leq  C\alpha_N\Vert f_N^\varepsilon\Vert_{L^2(\Omega\times[0, T];H))}\Vert \phi\Vert_{\mathbb{X}}\Vert \xi\Vert_{\infty},
		\end{align*}
		where $C>0$ is a constant independent of $N$. On the other hand, we have   $
			0  \leq \alpha_N \lesssim  \dfrac{1}{N^3} $ and   \eqref{cv-N-A} follows. Concerning \eqref{cv-N-B}, thanks to 	 \eqref{limit-A-N} we have 
		\begin{align}\label{cv-3-*}
			& \int_A\int_0^T\big(\varphi_\varepsilon^2A^N(r) \nabla_rf^\varepsilon_N,\nabla_r\left(\dfrac{\phi}{\mathbf{M}}\right))\xi dsdP \notag\\&=
       \int_A\int_0^T\big(\varphi_\varepsilon^2A(r)\nabla_rf_\varepsilon,\nabla_r\left(\dfrac{\phi}{\mathbf{M}}\right))\xi dsdP+R_N,
		\end{align}
		where $R_N$ satisfies
		\begin{align*}
			\vert	R_N\vert  &\leq  \dfrac{C}{\tau N} 	\int_A\int_0^T\int_{\mathbb{T}^2}\int_{D} \varphi_\varepsilon^2\vert P(r)\vert \vert \nabla_r f^\varepsilon_N\vert\cdot\vert \nabla_r\left(\dfrac{\phi}{\mathbf{M}}\right)\vert \vert\xi\vert dr dxdsdP \\
			&\leq\dfrac{C_\varepsilon}{\tau N} 	\Vert f^\varepsilon_N\Vert_{L^2(\Omega\times[0, T];V))}\Vert \phi\Vert_{V}\Vert \xi\Vert_{\infty} \to 0   \text{  as  }   N\to    +\infty.
		\end{align*}
		Now, by passing to the limit in \eqref{cv-3-*}, \eqref{cv-N-B} follows.
         	\end{proof} 	
	\begin{lemma}\label{Lemma-stoch-cv} The   following  convergence holds:  
	    \begin{align}\label{limit-cv-*}
	        	\lim_N	\int_A\int_0^TB_N^\varepsilon \dot\xi dsdP=\int_A\int_0^T 
                (f_\varepsilon, \phi)_H \dot \xi  dsdP.
	    \end{align}
	\end{lemma}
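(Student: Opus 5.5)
The idea is to split $B_N^\varepsilon$ into the deterministic pairing $(f^\varepsilon_N(t),\phi)_H$ and the stochastic integral
\[
M_N(t):=\sum_{k\in K}\int_0^t\big(f_N^\varepsilon(s),\, \sigma_k^N\cdot\nabla_x\phi +(\nabla \sigma_k^Nr)\cdot\nabla_r (\varphi_\varepsilon\phi)+(\nabla \sigma_k^Nr)\cdot\tfrac{\kappa r}{1-\vert r\vert^2}\varphi_\varepsilon\phi\big)_H\,dW^k(s),
\]
and to treat the two parts separately. The first part is immediate. The map $g\mapsto \int_A\int_0^T (g,\phi)_H\dot\xi\,ds\,dP$ is a continuous linear functional on $L^2(\Omega;L^2(0,T;H))$, and $\phi\in H$. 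The weak convergence \eqref{cv1-eps} (or \eqref{cv2-eps}) therefore gives
\[
\int_A\int_0^T (f^\varepsilon_N,\phi)_H\dot\xi \to \int_A\int_0^T (f_\varepsilon,\phi)_H\dot\xi .
\]
So it remains to show that $\int_A\int_0^T M_N\dot\xi\,ds\,dP\to0$. By Cauchy--Schwarz and the Itô isometry this follows from
\[
\E\sup_{t\le T}|M_N(t)|^2\le C\,\E\int_0^T\sum_{k\in K}\big(f^\varepsilon_N,\Psi_k^N\big)_H^2\,ds\to 0,
\]
where $\Psi_k^N$ denotes the integrand test function above.

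Next, I bound each $(f^\varepsilon_N,\Psi_k^N)_H$ using $\Vert f^\varepsilon_N\Vert_H$ and the uniform estimate \eqref{estimate-solution}, which is independent of $N$. Since $\phi=\mathbf{M}\widetilde\phi$, we have $(f,\Psi)_H=\int f\,\Psi/\mathbf{M}$. For the transport term, $\Psi/\mathbf{M}=\sigma_k^N\cdot\nabla_x\widetilde\phi$, so $|(f,\Psi)_H|\le \Vert f\Vert_H\,\Vert \mathbf{M}\,\sigma_k^N\cdot\nabla_x\widetilde\phi\Vert_H\lesssim \theta_k^{N,\tau}\Vert f\Vert_H\Vert\widetilde\phi\Vert_{C^1}$. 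Summing over $k$ gives
\[
\sum_k (\theta^{N,\tau}_k)^2=2\alpha_N\to0
\]
by \eqref{coefficient}. For the stretching terms I first combine them. Using $\nabla_r\phi=\mathbf{M}\nabla_r\widetilde\phi+\widetilde\phi\nabla_r\mathbf{M}$ and $\nabla\mathbf{M}/\mathbf{M}=-\kappa r/(1-|r|^2)$, the singular parts cancel, so that
\[
(\nabla\sigma_k^N r)\cdot\nabla_r(\varphi_\varepsilon\phi)+(\nabla\sigma_k^N r)\cdot\tfrac{\kappa r}{1-|r|^2}\varphi_\varepsilon\phi=\mathbf{M}\,(\nabla\sigma_k^Nr)\cdot\nabla_r(\varphi_\varepsilon\widetilde\phi).
\]
This is bounded by $\mathbf{M}\,\theta_k^{N,\tau}|k|\,C_\varepsilon\Vert\widetilde\phi\Vert_{C^1}$. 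The cutoff gives the $C_\varepsilon$ through $|\nabla\varphi_\varepsilon|\le C/\varepsilon$, and even without this cancellation the singular factor is controlled by $\varphi_\varepsilon/(1-|r|^2)\le1/\varepsilon$.

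The main obstacle is that $\sum_k|k|^2(\theta_k^{N,\tau})^2$ does not vanish. This sum is exactly what produces $A$, so a naive $\ell^2$ bound fails, and I must exploit oscillation in $x$. Each $\nabla\sigma_k^N$ carries a factor $\cos(k\cdot x)$ or $\sin(k\cdot x)$ with $|k|\ge N$. Hence
\[
(f,\Psi_k)_H=\theta_k^{N,\tau}|k|\int e^{\pm ik\cdot x}\,G(x)\,dx,
\]
where $G(x)=\int_D f\,c_k(r)\cdot\nabla_r(\varphi_\varepsilon\widetilde\phi)\,dr$ and $c_k$ is bounded and depends only on the direction of $k$. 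Summing over $k$ and using $\theta_k^{N,\tau}|k|=a_\tau/|k|\le a_\tau/N$ gives, by Bessel's inequality in $x$, a bound of order
\[
\frac{a_\tau^2}{N^2}\sum_{\text{dir. comps}}\Vert G\Vert_{L^2(\T^2)}^2\lesssim \frac{C_\varepsilon}{N^2}\Vert f\Vert_H^2\Vert\widetilde\phi\Vert_{C^1}^2 .
\]
Expanding $c_k$ into finitely many fixed components requires some care, since $k^\perp\otimes k/|k|^2$ has entries that are bounded functions of the direction. To handle this I would write $(\nabla\sigma_k r)_i=\theta_k|k|\,(k^\perp_i/|k|)(k\cdot r/|k|)\,\mathrm{trig}(k\cdot x)$, so the coefficients are bounded uniformly. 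Bessel's inequality is then applied to each of the finitely many functions $G_{ij}(x)=\int_D f\,r_j\,\partial_{r_i}(\varphi_\varepsilon\widetilde\phi)\,dr$, with the bounded coefficients pulled out. Integrating in time and using \eqref{estimate-solution}, $\E\sup_t|M_N|^2\lesssim C_\varepsilon N^{-2}+\alpha_N\to0$, which proves \eqref{limit-cv-*}.
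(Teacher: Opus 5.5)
Your proposal is correct and follows essentially the same route as the paper. Both split off the It\^o integral, recombine the stretching terms into $\mathbf{M}\,(\nabla\sigma_k^N r)\cdot\nabla_r(\varphi_\varepsilon\widetilde\phi)$, apply BDG/It\^o isometry, and use Bessel's inequality in $x$ with $\sup_k |k|^2(\theta_k^{N,\tau})^2=a_\tau^2/N^2\to0$; the paper phrases this with the tensor-valued orthonormal system $\frac{k}{|k|}\otimes\frac{k^\perp}{|k|}\cos k\cdot x$, whereas you use the scalar components $G_{ij}$. The only deviation is that you bound the transport part by the cruder $\sum_{k\in K}(\theta_k^{N,\tau})^2\to0$ instead of Bessel with $\sup_k(\theta_k^{N,\tau})^2$; that sum equals $8\alpha_N$ rather than $2\alpha_N$, which is harmless, and both bounds work.
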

\begin{proof} Notice that
\begin{align}
   \label{cv-3-stoch} \int_A\int_0^T\bigl[ B_N^\varepsilon\dot\xi \bigr]dsdP =\int_A\int_0^T(f_N^\varepsilon(s),\phi)_H \dot\xi dsdP-I_N^\varepsilon,
\end{align}
where 
\begin{align}\label{Martingale-lim}
I_N^\varepsilon=\int_A\int_0^T\bigl[ \sum_{k\in K}\int_0^s\int_{\mathbb{T}^2}\int_{D}f_N^\varepsilon(v)( \sigma_k^N.\nabla_x(\dfrac{\phi}{\mathbf{M}}) +(\nabla \sigma_k^Nr).\nabla_r\big(\dfrac{\phi \varphi_\varepsilon}{\mathbf{M}}\big)) drdxdW^k_v\dot \xi\bigr]dsdP.
\end{align}
Moreover, we have
\begin{align}\label{bound-weight-cut}
    \Vert \nabla_r\big(\dfrac{\phi \varphi_\varepsilon}{\mathbf{M}}\big)\Vert_\infty= \Vert \nabla_r\big(\widetilde{\phi} \varphi_\varepsilon\big)\Vert_\infty \leq  \Vert \nabla_r\widetilde{\phi}\Vert_\infty+C{\varepsilon}^{-1}\Vert \widetilde{\phi}\Vert_\infty =\mathbf{K}_\varepsilon.
\end{align}
Let us prove that $\displaystyle\lim_N I_N^\varepsilon=0.$  
Thanks to Fubini’s theorem and Cauchy-Schwarz, it is clear that this follows if we can show that 
\begin{align}
    \lim_N \EE{\sup_{t\in [0, T]}\left|\int _0^t \sum_{k\in K}\langle f_N^\varepsilon,( \sigma_k^N.\nabla_x(\dfrac{\phi}{\mathbf{M}}) +(\nabla \sigma_k^Nr).\nabla_r\big(\dfrac{\phi \varphi_\varepsilon}{\mathbf{M}}\big)\rangle_{L^2(\T^2\times D)}\right|^2} =0
\end{align}
By, Burkholder-Davis-Gundy we have 
\begin{align*}
    &\EE{\sup_{t\in [0, T]} \left|\int _0^t \sum_{k\in K}\left\langle f_N^\varepsilon,( \sigma_k^N\cdot\nabla_x(\dfrac{\phi}{\mathbf{M}}) +(\nabla \sigma_k^Nr)\cdot\nabla_r\big(\dfrac{\phi \varphi_\varepsilon}{\mathbf{M}}\big)\right\rangle_{L^2(\T^2\times D)}ds\right|^2} \\ 
    &\lesssim \EE{\sum_{k\in K}\int _0^T \left\langle f_N^\varepsilon, \sigma_k^N\cdot \nabla_x(\dfrac{\phi}{\mathbf{M}}) +(\nabla \sigma_k^Nr)\cdot \nabla_r\big(\dfrac{\phi \varphi_\varepsilon}{\mathbf{M}}\big)\right\rangle_{L^2(\T^2\times D)}^2 ds}\\
    &\lesssim I^N_1 + I^N_2.
\end{align*}
		Where, by  using   \eqref{Def-sigma-k},   we  get
		\begin{align*}
			I_1^N&:=2\E\int_0^T\sum_{k\in K}\left[\int_{D}\int_{\mathbb{T}^2}(f_N^\varepsilon(v)\nabla_x(\dfrac{\phi}{\mathbf{M}}))\cdot\sigma_k^N dxdr\right]^2dv\\&=\E\int_0^T\sum_{k\in K_+}(\theta^{N,\tau}_{k})^2\left[\int_{D}\int_{\mathbb{T}^2}(f_N^\varepsilon(v)\nabla_x(\dfrac{\phi}{\mathbf{M}}))\cdot  \dfrac{k^\perp}{\vert   k\vert}\cos{k\cdot  x}dxdr\right]^2dv\\&+\E\int_0^T\sum_{k\in K_-}(\theta^{N,\tau}_{k})^2\left[\int_{D}\int_{\mathbb{T}^2}(f_N^\varepsilon(v)\nabla_x(\dfrac{\phi}{\mathbf{M}}))\cdot  \dfrac{k^\perp}{\vert   k\vert}\sin{k\cdot  x}dxdr\right]^2dv.
		\end{align*}
		By using   that    $(\frac{k^\perp}{\vert   k\vert}\cos{k\cdot  x},\frac{k^\perp}{\vert   k\vert}\sin{k\cdot  x})_{k\in  K}$  is an
		(incomplete) orthonormal system in  $L^2(\T^2;\mathbb{R}^2)$,    we  obtain by  using   \eqref{coefficient}  
      	\begin{align*}
			\vert I_1^N\vert&\leq     \sup_{k\in  K}(\theta^{N,\tau}_{k})^2\E\int_0^T\int_{D}\int_{\mathbb{T}^2}(f_N^\varepsilon(v)\nabla_x(\dfrac{\phi}{\mathbf{M}}))^2dxdrdv\\&\leq\sup_{k\in  K}(\theta^{N,\tau}_{k})^2\E\int_0^T\int_{D}\int_{\mathbb{T}^2}(f_N^\varepsilon(v)\nabla_x\widetilde{\phi})^2dxdrdv\\
			&\leq    \Vert \nabla_x\widetilde{\phi}\Vert_\infty^2\sup_{k\in  K}(\theta^{N,\tau}_{k})^2  \E\int_0^T\int_{D}\int_{\mathbb{T}^2}f_N^\varepsilon(x,r,v)^2dxdrdv\\&\leq   \Vert \nabla_x\widetilde{\phi}\Vert_\infty^2\sup_{k\in  K}(\theta^{N,\tau}_{k})^2\Vert  f_N^\varepsilon\Vert_{L^2(\Omega\times[0,T];H)}^2\to   0   \text{  as  }   N\uparrow    +\infty.
		\end{align*}
		Concerning   $I_2^N$,    we  have
		\begin{align*}
			I_2^N&:= 2\E\int_0^T\sum_{k\in K}\left[\int_{D}\int_{\mathbb{T}^2}\left(f_N^\varepsilon(v)\nabla_r\big(\dfrac{\phi \varphi_\varepsilon}{\mathbf{M}}\big)\right)\cdot(\nabla \sigma_k^Nr) dxdr\right]^2dv\\&=\E\int_0^T\sum_{k\in K_+}(\theta^{N,\tau}_{k})^2\vert   k\vert^2\left[\int_{D}\int_{\mathbb{T}^2}\dfrac{k}{\vert   k\vert}\cdot   r\left(f_N^\varepsilon(v)\nabla_r\big(\dfrac{\phi \varphi_\varepsilon}{\mathbf{M}}\big)\right)\cdot  \dfrac{k^\perp}{\vert   k\vert}\sin{k\cdot  x}dxdr\right]^2dv\\&+\E\int_0^T\sum_{k\in K_-}(\theta^{N,\tau}_{k})^2\vert   k\vert^2\left[\int_{D}\int_{\mathbb{T}^2}\dfrac{k}{\vert   k\vert}\cdot   r\left(f_N^\varepsilon(v)\nabla_r\big(\dfrac{\phi \varphi_\varepsilon}{\mathbf{M}}\big)\right)\cdot  \dfrac{k^\perp}{\vert   k\vert}\cos{k\cdot  x}dxdr\right]^2dv.
		\end{align*}
		Thus,    by  using   that $(\dfrac{k}{\vert   k\vert}\otimes\dfrac{k^\perp}{\vert   k\vert}\cos{k\cdot  x},\dfrac{k}{\vert   k\vert}\otimes\dfrac{k^\perp}{\vert   k\vert}\sin{k\cdot  x})_{k\in  K}$  is an
		(incomplete) orthonormal system in  $L^2(\T^2;\mathbb{R}^2\otimes \mathbb{R}^2)$    we  obtain
		\begin{align*}
			\vert I_1^N\vert&\leq     \sup_{k\in  K}[(\theta^{N,\tau}_{k})^2\vert   k\vert^2]\E\int_0^T\int_{D}\int_{\mathbb{T}^2}   \vert    r\vert^2 \vert \nabla_r\big(\dfrac{\phi \varphi_\varepsilon}{\mathbf{M}}\big)\vert^2\left(f_N^\varepsilon(v)\right)^2dxdrdv\\
			&\leq  \mathbf{K}_\varepsilon^2\sup_{k\in  K}[(\theta^{N,\tau}_{k})^2\vert   k\vert^2]\Vert  f_N^\varepsilon\Vert_{L^2(\Omega\times[0,T];H)}^2\to   0   \text{  as  }   N\uparrow   +\infty.
		\end{align*}
Finally, by using \eqref{cv1-eps} in  \eqref{cv-3-stoch},  we get \eqref{limit-cv-*}.
\end{proof}
Finally, by using \Cref{lem-cv-2} and \Cref{Lemma-stoch-cv}, we can pass to the limit in \eqref{cv1-N} as $N\to +\infty$ to get
	\begin{align}\label{iden-test-class}
		-\int_A\int_0^T 
                (f_\varepsilon, \phi)_H \dot\xi  dsdP=-\int_A\int_0^T\big(\dfrac{1}{\beta}\mathbf{M}\nabla_r(\dfrac{f_\varepsilon}{\mathbf{M}})+\dfrac{1}{2}\varphi_\varepsilon^2A(r)\nabla_rf_\varepsilon,\nabla_r\left(\dfrac{\phi}{\mathbf{M}}\right)\big) \xi dsdP\end{align}
                It is worth noticing that the last equality \eqref{iden-test-class} holds, \textit{a priori}, with test functions $\phi=\mathbf{M} \widetilde{\phi}$ such that $  \widetilde{\phi}\in C^2(\T^2)\otimes C^1(\overline{D})$ but since $    \overline{C^1(\overline{D})}^{\mathcal{H}^1_\mathbf{M}}=\mathcal{H}^1_{\mathbf{M}}$ (see \cite[Proposition B.2]{Masmoudi2013}) where $\mathcal{H}^1_{\mathbf{M}}=\{g: \quad \int_D(\vert g\vert^2+\vert \nabla g\vert^2)\mathbf{M} dr<\infty\}$, we deduce that \eqref{iden-test-class} holds for any $\phi\in V.$  \\ 
                
In particular, $f_\varepsilon:=f_\varepsilon(t,\omega, x,r)$ solves, in the sense of distribution, the following problem
\begin{align}\label{limit-PDE-eps}
    \dfrac{\partial f_\varepsilon}{\partial t}=\Div_r\big ( \dfrac{1}{\beta}\mathbf{M}\nabla_r(\dfrac{f_\varepsilon}{\mathbf{M}})+\dfrac{1}{2}\varphi_\varepsilon^2A(r)\nabla_rf_\varepsilon\big)=\Div_r\big ( \dfrac{\k}{\beta} f_\varepsilon F(r)r+\dfrac{1}{\beta}\nabla_rf_\varepsilon+ \dfrac{1}{2} \varphi_\varepsilon^2A(r) \nabla_rf_\varepsilon\big).
\end{align}
On the other hand, we have ( see \eqref{cv1-eps} and \eqref{cv2-eps}) P-a.s.  $f_\varepsilon \in L^2(0, T;V) \cap L^\infty(0, T;H)$ and  \eqref{limit-PDE-eps} ensures that $\dfrac{\partial f_\varepsilon}{\partial t}\in  L^2(0,T;V^\prime)$, which gives $f_\varepsilon\in C([0,T];H)$.
    Recall that  $B_N^\varepsilon$  given by \eqref{time-regu} satisfies:
$ B_N^\varepsilon\in L^2(\Omega\times[0,T]) \text{ and } \partial_tB_N^\varepsilon\in L^2(\Omega\times[0,T]),$ which guarantees that $ B_N^\varepsilon\in   L^2(\Omega;C([0,T]).$\\
We   finish  the proof   by  showing some    continuous  convergence in  time.   Let  $\xi \in \mathcal{C}^\infty([0,t])$ for $t\in ]0,T]$ and note that the	following	integration	by	parts	formula	for $B_N^\varepsilon$, given by  \eqref{time-regu}, holds
	\begin{align}\label{IPPtimez-1}
	\int_A	\int_0^t	\dfrac{dB_N^\varepsilon}{ds}(s)\xi(s) ds dP&=-\int_A\int_0^tB_N^\varepsilon(s)\dot\xi ds dP+\int_AB_N^\varepsilon(t)\xi(t)dP- \int_A(  f_0 ,\phi)_H\xi(0)dP.
	\end{align}
	Now,    by  standard   arguments  (see \textit{e.g.}  \cite[proof    of  Prop. 3.]{vallet2019well}) and using \eqref{iden-test-class} we  get
	$$ \text{ for any } t\in [0,T]:   \int_A B_N^\varepsilon(t) dP \to    \int_A(f_\varepsilon(t),\phi)_H dP, \text{ as } N\to \infty.$$
	and    $f_\varepsilon(0)=f_0$   in  $H$-sense.
    Finally, recall that from \eqref{time-regu}, we have
    \begin{align*}
   (f_N^\varepsilon(t), \phi )_H=     B_N^\varepsilon(t)  +\sum_{k\in K}\int_0^t\big(f_N^\varepsilon(s),( \sigma_k^N.\nabla_x\phi +(\nabla \sigma_k^Nr).\nabla_r (\varphi_\varepsilon\phi)+(\nabla \sigma_k^Nr).\dfrac{\kappa r}{1-\vert r\vert^2}\varphi_\varepsilon\phi)\big)_HdW^k(s).
    \end{align*}
    Notice that Burkholder-Davis-Gundy  inequality ensures
    \begin{align*}
        &\E\sup_{v\in[0,t]}\vert \sum_{k\in K}\int_0^v\big(f_N^\varepsilon(s),( \sigma_k^N.\nabla_x\phi +(\nabla \sigma_k^Nr).\nabla_r (\varphi_\varepsilon\phi)+(\nabla \sigma_k^Nr).\dfrac{\kappa r}{1-\vert r\vert^2}\varphi_\varepsilon\phi)\big)_HdW^k(s)\vert\\
        &=\E\sup_{v\in[0,t]}\vert \int_0^v\int_{\mathbb{T}^2}\int_{D} \sum_{k\in K}f_N^\varepsilon(v)( \sigma_k^N.\nabla_x(\dfrac{\phi}{\mathbf{M}}) +(\nabla \sigma_k^Nr).\nabla_r\big(\dfrac{\phi \varphi_\varepsilon}{\mathbf{M}}\big)) drdxdW^k(v) \vert\\
        &\leq  C_B \E\Big[ \int_0^t \sum_{k\in K}\big[\int_{\mathbb{T}^2}\int_{D} f_N^\varepsilon(v)( \sigma_k^N.\nabla_x(\dfrac{\phi}{\mathbf{M}}) +(\nabla \sigma_k^Nr).\nabla_r\big(\dfrac{\phi \varphi_\varepsilon}{\mathbf{M}}\big)) drdx\big]^2dv\Big]^{1/2}\\
        &\leq C_BT^{1/2}\Big[\E \int_0^t \sum_{k\in K}\big[\int_{\mathbb{T}^2}\int_{D} f_N^\varepsilon(v)( \sigma_k^N.\nabla_x(\dfrac{\phi}{\mathbf{M}}) +(\nabla \sigma_k^Nr).\nabla_r\big(\dfrac{\phi \varphi_\varepsilon}{\mathbf{M}}\big)) drdx\big]^2dv\Big]^{1/2} \to 0,
    \end{align*}
as $N\uparrow+\infty$, where the convergence to $0$ follows by similar argument as in the proof of  \autoref{Lemma-stoch-cv}. Consequently, we get 
\begin{align*}
  \forall \phi\in H: \quad      \lim_{N\to+\infty}   \int_A(f_N^\varepsilon(t), \phi )_HdP=\int_A (f_\varepsilon(t), \phi )_H dP\text{ for any } t\in [0,T].
\end{align*}
     \begin{proposition}\label{prop-uniq-1}
          For any $\varepsilon>0,$ the solution $f_\varepsilon$ given by \Cref{THm-scal-eps} is unique.     
    \end{proposition}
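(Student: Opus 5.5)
Since \eqref{limit-PDE-eps} is linear in $f_\varepsilon$, it suffices to show that a solution $g\in L^2(0,T;V)\cap C([0,T];H)$ with $\partial_t g\in L^2(0,T;V')$ and $g(0)=0$ vanishes identically; the argument is pathwise, for fixed $\omega$. We work in the Gelfand triple $V\hookrightarrow H\hookrightarrow V'$ of \eqref{Lions-Gelfand-triple}. By the Lions--Magenes lemma, $t\mapsto\Vert g(t)\Vert_H^2$ is absolutely continuous and
\[
\tfrac12\tfrac{d}{dt}\Vert g\Vert_H^2=\langle\partial_t g,g\rangle_{V',V}.
\]
Testing \eqref{limit-PDE-eps-finalV1} with $\Phi=g(t)\in V$, which is admissible because the weak formulation holds for all $\Phi\in V$, gives
\[
\tfrac12\tfrac{d}{dt}\Vert g\Vert_H^2+\tfrac1\beta[g]_V^2=-\tfrac12\big(\varphi_\varepsilon^2A(r)\nabla_r g,\nabla_r(g/\mathbf{M})\big).
\]

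The right-hand side is the same term that appeared in the Galerkin estimate \eqref{esti-cov-uniq}, now with $A$ in place of $A^N$. We write
\[
\nabla_r(g/\mathbf{M})=\frac{\nabla_r g}{\mathbf{M}}+g\,\nabla(1/\mathbf{M}).
\]
The first piece contributes $-\tfrac12\int\varphi_\varepsilon^2 A\nabla_r g\cdot\nabla_r g/\mathbf{M}\le0$, because $A$ is positive semidefinite: $3|r|^2I-2r\otimes r$ has eigenvalues $|r|^2$ and $3|r|^2$. The second piece is $-\tfrac14\int\varphi_\varepsilon^2A\nabla_r(g^2)\cdot\nabla(1/\mathbf{M})$. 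We integrate it by parts in $r$. There is no boundary term, since $\varphi_\varepsilon$ vanishes near $\partial D$. The result is
\[
\tfrac14\int g^2\,\Div_r\big[\varphi_\varepsilon^2A\nabla(1/\mathbf{M})\big].
\]
On the support of $\varphi_\varepsilon$ we have $1-|r|\ge\varepsilon$. There $\nabla(1/\mathbf{M})$ and $\mathcal{D}^2(1/\mathbf{M})$ are bounded by $C_\kappa\varepsilon^{-2}/\mathbf{M}$, $A$ and $\nabla_r A$ are bounded, and $|\nabla\varphi_\varepsilon|\le C/\varepsilon$. Hence the divergence is at most $C_{\kappa,\tau,\varepsilon}/\mathbf{M}$, and this term is bounded by $C_{\kappa,\tau,\varepsilon}\Vert g\Vert_H^2$.

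Combining the two pieces gives $\tfrac{d}{dt}\Vert g\Vert_H^2\le 2C_{\kappa,\tau,\varepsilon}\Vert g\Vert_H^2$ with $g(0)=0$, so Grönwall's lemma yields $g\equiv0$ on $[0,T]$. It follows that the whole sequence $f^\varepsilon_N$, not only a subsequence, converges to $f_\varepsilon$ as stated in \autoref{THm-scal-eps}.

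The main obstacle is justifying the pairing $\langle\partial_t g,g\rangle$ and the integration by parts with only $g\in V$. The term $\int\varphi_\varepsilon^2A\nabla_r g\cdot\nabla_r g/\mathbf{M}$ is finite, since $\mathbf{M}^{-1}$ is bounded on the support of $\varphi_\varepsilon$ and $\nabla_r g\in L^2_{loc}$. If needed, we would first carry out the computation for $g$ replaced by its projection onto the span of the eigenfunctions $e_i\otimes k_l$ and then pass to the limit. Alternatively, we would use the density $\overline{C^1(\overline D)}^{\mathcal{H}^1_{\mathbf{M}}}=\mathcal{H}^1_{\mathbf{M}}$ applied to $g/\mathbf{M}$, exactly as was done to extend \eqref{iden-test-class} to all $\phi\in V$.
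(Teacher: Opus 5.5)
Your proof is correct, and its skeleton matches the paper's: reduce to zero data by linearity, test with the solution, apply Lions--Magenes, and close with Gr\"onwall. The difference lies in how you bound the stretching term $-\frac12\int\varphi_\varepsilon^2 A\nabla_r g\cdot\nabla_r(g/\mathbf{M})$.

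\textbf{Your route.} You expand $\nabla_r(g/\mathbf{M})=\nabla_r g/\mathbf{M}+g\nabla(1/\mathbf{M})$, discard the nonnegative piece, and integrate the cross term by parts. This reuses the computation from the Galerkin estimate \eqref{esti-cov-uniq}, and it brings in $\nabla\varphi_\varepsilon$, $\nabla_r A$ and the Hessian of $1/\mathbf{M}$. It also requires the integration by parts to be justified. Your observation handles this: $\varphi_\varepsilon$ has compact support in $D$, so $g\in H^1_{loc}$ and $g^2\in W^{1,1}_{loc}$ there. The Galerkin or density fallback you mention is therefore unnecessary.

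\textbf{The paper's route.} It expands the other factor, $\nabla_r f=\mathbf{M}\nabla_r(f/\mathbf{M})-f\,\kappa r/(1-|r|^2)$. Using $A\ge k_T|r|^2 I$ and $Ar=k_T|r|^2 r$, it keeps the dissipative term $-k_T\int\varphi_\varepsilon^2|r|^2\mathbf{M}|\nabla_r(f/\mathbf{M})|^2$. It then absorbs the cross term into half of this by Young's inequality. The remainder is bounded by $\frac{k_T\kappa^2}{2\varepsilon^2}\|f\|_H^2$, since $\varphi_\varepsilon^2/(1-|r|^2)^2\le\varepsilon^{-2}$.

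\textbf{Comparison.}
\begin{itemize}
\item The paper's route needs only pointwise bounds: no integration by parts, and no derivatives of the cutoff, of $A$, or second derivatives of the weight. It also keeps an extra $\varepsilon$-weighted dissipation in the energy inequality.
\item Your route uses only positive semidefiniteness and smoothness of $A$, not its radial structure. The price is the regularity bookkeeping.
\item Both give a Gr\"onwall constant of order $\varepsilon^{-2}$, which is all that uniqueness at fixed $\varepsilon$ requires.
\end{itemize}
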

  \begin{proof}
      Since \eqref{limit-PDE-eps-finalV1} is linear with respect to $f_\varepsilon,$ it is sufficient to prove that $f_\varepsilon(t)\equiv 0$ for any $t\in ]0,T]$ if $f_0\equiv 0$. Let $\varepsilon>0$ and $t\in ]0,T[$ and notice that $f_\varepsilon$ is an admissible test function in \eqref{limit-PDE-eps-finalV1}. Therefore, after integration from $0$ to $t$ we get 
       \begin{align*}
    \int_0^t\langle\dfrac{\partial f_\varepsilon}{\partial s},f_\varepsilon\rangle_{V^\prime,V}ds=-\int_0^t\int_{\mathbb{T}^2}\int_D\big ( \dfrac{1}{\beta}\mathbf{M}\nabla_r(\dfrac{f_\varepsilon}{\mathbf{M}})+ \varphi_\varepsilon^2\dfrac{1}{2} A(r) \nabla_rf_\varepsilon\big)\cdot \nabla_r(\dfrac{f_\varepsilon}{\mathbf{M}}) drdxds.
\end{align*}
Since $2\displaystyle\int_0^t\langle\dfrac{\partial f_\varepsilon}{\partial s},f_\varepsilon\rangle_{V^\prime,V}ds=\Vert f_\varepsilon(t)\Vert_H^2,$ thanks to the  results based on  Lions-Guelfand
triple (see \textit{e.g.} \cite[Theorem 3.1.]{Lions-Magenes}). By using the definition of $A(r)$, given by \eqref{matrix-limit}, we have
\begin{align*}
    \Vert f_\varepsilon(t)\Vert_H^2+\int_0^t \dfrac{2}{\beta}[f_\varepsilon(s)]_V^2 ds \leq -\int_0^t\int_{\mathbb{T}^2}\int_D \varphi_\varepsilon^2 A(r) \nabla_rf_\varepsilon\cdot \nabla_r(\dfrac{f_\varepsilon}{\mathbf{M}}) drdxds.
\end{align*}
On the other hand, notice that
\begin{align*}
-\int_0^t\int_{\mathbb{T}^2}\int_D \varphi_\varepsilon^2 A(r) \nabla_rf_\varepsilon\cdot \nabla_r(\dfrac{f_\varepsilon}{\mathbf{M}}) drdxds \leq& -k_T\int_0^t\int_{\mathbb{T}^2}\int_D  \varphi_\varepsilon^2\vert r\vert^2\mathbf{M} \vert \nabla_r(\dfrac{f_\varepsilon}{\mathbf{M}})\vert^2 drdxds\\&+\int_0^t\int_{\mathbb{T}^2}\int_D \varphi_\varepsilon^2 f_\varepsilon A(r)\cdot \dfrac{\kappa r}{1-\vert r\vert^2} \cdot \nabla_r(\dfrac{f_\varepsilon}{\mathbf{M}}) drdxds.
\end{align*}
Since 
\begin{align*}
    &\int_0^t\int_{\mathbb{T}^2}\int_D \varphi_\varepsilon^2 f_\varepsilon A(r)\cdot \dfrac{\kappa r}{1-\vert r\vert^2} \cdot \nabla_r(\dfrac{f_\varepsilon}{\mathbf{M}}) drdxds \leq k_T\kappa   \int_0^t\int_{\mathbb{T}^2}\int_D\dfrac{\varphi_\varepsilon^2  \vert r\vert^3}{1-\vert r\vert^2} \vert f_\varepsilon\vert \vert \nabla_r(\dfrac{f_\varepsilon}{\mathbf{M}})\vert drdxds\\
    &\leq  \dfrac{k_T}{2}\int_0^t\int_{\mathbb{T}^2}\int_D  \varphi_\varepsilon^2\vert r\vert^2\mathbf{M} \vert \nabla_r(\dfrac{f_\varepsilon}{\mathbf{M}})\vert^2 drdxds+ \dfrac{k_T\kappa^2}{2}\int_0^t\int_{\mathbb{T}^2}\int_D   \dfrac{\varphi_\varepsilon^2  \vert r\vert^4}{(1-\vert r\vert^2)^2} \dfrac{f_\varepsilon^2}{\mathbf{M}} drdxds,
\end{align*} we get
\begin{align*}     \Vert f_\varepsilon(t)\Vert_H^2+\int_0^t \dfrac{2}{\beta}[f_\varepsilon(s)]_V^2 ds+ \dfrac{k_T}{2}\int_0^t\int_{\mathbb{T}^2}\int_D  \varphi_\varepsilon^2\vert r\vert^2\mathbf{M} \vert \nabla_r(\dfrac{f_\varepsilon}{\mathbf{M}})\vert^2 drdxds\leq \dfrac{k_T\kappa^2}{2\varepsilon^2}\int_0^t \Vert f_\varepsilon(s)\Vert_H^2ds,
\end{align*}
 and Gr\"onwall’s lemma concludes the proof.
\end{proof}

\section{Removing the cut-off (\autoref{THm-eps-nocut})}\label{section-remove-cut-off}
    In  this    section,   we  show    that    the unique  solution    of   scaling limit equation    \eqref{limit-PDE-eps-finalV1} converges, as $\varepsilon\to 0$,   to  the unique   solution    of \eqref{limit-PDE-final-NOCUT-V1} given by \autoref{THm-eps-nocut}. To reach the limit $\varepsilon\to 0$, we must work in the appropriate weighted space, where the weight depends on $\varepsilon$ but sits  between the weights $\mathbf{M}$ and $\mathbf{M}_0.$
     \subsection{Proof of \autoref{THm-eps-nocut}}
Let $\varepsilon>0$, from \autoref{THm-scal-eps} there exists a unique $$f_\varepsilon \in L^2(0, T;V) \cap C([0, T];H)\footnote{Since the dependence on $\omega\in \Omega$ does not affect the proof, we ignore referring to that.}$$  such that
        \begin{align}\label{eq-to-test}
    \langle\dfrac{\partial f_\varepsilon}{\partial t},\Phi\rangle_{V^\prime,V}=-\int_{\mathbb{T}^2}\int_D\big ( \dfrac{\k}{\beta} f_\varepsilon F(r)r+\dfrac{1}{\beta}\nabla_rf_\varepsilon+ \dfrac{1}{2}\varphi_\varepsilon^2 A(r) \nabla_rf_\varepsilon\big)\cdot \nabla_r\left(\dfrac{\Phi}{\mathbf{M}}\right) drdx,  \forall \Phi \in V.
\end{align}
Let $r\in D$ and  consider the radial function $\mathbf{M}_\varepsilon$ given by
\begin{align}\label{weight-inter}
    \mathbf{M}_\varepsilon(r)=\exp{\big(-\kappa\int_0^{\vert r\vert} \dfrac{s ds}{(1-s^2)(1+\frac{\lambda\beta}{2 \tau}\varphi_\varepsilon^2 s^2)}  \big)}.
\end{align}
Note that   
$\mathbf{M}_\varepsilon$  solves
\begin{align}\label{eqn-weight-eps}
     \dfrac{\k}{\beta} \mathbf{M}_\varepsilon F(r)r+\dfrac{1}{\beta}\nabla_r\mathbf{M}_\varepsilon+ \dfrac{1}{2}\varphi_\varepsilon^2 A(r) \nabla_r\mathbf{M}_\varepsilon=0.
\end{align}  On the other hand,  denote by $\widetilde{\mathbf{M}}_0(r)=\left(
\dfrac{1 - \vert r\vert^{2}}{1 + \frac{\lambda\beta}{2 \tau}\, \vert r\vert^{2}}
\right)^{\frac{\kappa}{2\left(1 + \frac{\lambda\beta}{2 \tau}\right)}}.$ Since $0\leq \varphi_\varepsilon \leq 1,$ one gets
    \begin{align*}
           (1-\vert r\vert^2)^{\frac{\kappa}{2}} \leq \exp{\big(-\kappa\int_0^{\vert r\vert} \dfrac{s ds}{(1-s^2)(1+\frac{\lambda\beta}{2 \tau}\varphi_\varepsilon^2 s^2)}  \big)} \leq  \left(
\dfrac{1 - \vert r\vert^{2}}{1 + \frac{\lambda\beta}{2 \tau}\, \vert r\vert^{2}}
\right)^{\frac{\kappa}{2\left(1 + \frac{\lambda\beta}{2 \tau}\right)}},
    \end{align*}
and the following holds
\begin{align}\label{relation-weghts}
  C\mathbf{M}(r) \leq \mathbf{M}_\varepsilon(r) \leq  \widetilde{\mathbf{M}}_0(r) \text{ for any } r\in D, \quad \text{ where } C=\int_D (1-\vert r\vert^2)^{\frac{\kappa}{2}} dr.
\end{align}
In order to get a uniform estimate with respect to $\varepsilon,$ we need the following lemma.
\begin{lemma}\label{Lem-appropriate-test-function}
 Let $\varepsilon>0,$ the function $g_\varepsilon=\dfrac{\mathbf{M}f_\varepsilon}{\mathbf{M}_\varepsilon}  \in L^2(0, T;V) \cap C([0, T];H).$ 
\end{lemma}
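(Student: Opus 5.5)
The plan is to view $g_\varepsilon$ as $f_\varepsilon$ multiplied by the radial function $\psi_\varepsilon:=\mathbf{M}/\mathbf{M}_\varepsilon$. I will show that multiplication by $\psi_\varepsilon$ is a bounded operator on $H$ and on $V$, uniformly in time. The regularity of $f_\varepsilon$ given by \autoref{THm-scal-eps} then transfers to $g_\varepsilon$: boundedness on $V$ gives $L^2(0,T;V)$, and boundedness on $H$ (a continuous linear map preserves continuity) gives $C([0,T];H)$.

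First I check that $\psi_\varepsilon$ is bounded with bounded gradient. Since $\varphi_\varepsilon\equiv0$ for $1-|r|\le\varepsilon$, the integrand in \eqref{weight-inter} coincides there with $s/(1-s^2)$. Hence $\mathbf{M}_\varepsilon=c_\varepsilon(1-|r|^2)^{\kappa/2}$ in the annulus $\{1-|r|\le\varepsilon\}$, with $c_\varepsilon>0$ explicit. So $\psi_\varepsilon=(\mathbf{Z}c_\varepsilon)^{-1}$ is constant in that annulus. On the compact set $\{1-|r|\ge\varepsilon\}$ both weights are smooth and strictly positive. Therefore $\psi_\varepsilon\in C^1(\overline D)$ and $0<c\le\psi_\varepsilon\le C_\varepsilon$; alternatively \eqref{relation-weghts} gives the pointwise bound directly. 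In particular $\nabla\psi_\varepsilon$ is bounded and vanishes near $\partial D$.

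For the $H$ bound, $\|g_\varepsilon\|_H^2=\int \psi_\varepsilon^2 f_\varepsilon^2/\mathbf{M}\le \|\psi_\varepsilon\|_\infty^2\|f_\varepsilon\|_H^2$. For $V$, write $g_\varepsilon/\mathbf{M}=\psi_\varepsilon (f_\varepsilon/\mathbf{M})$. Then
$$\nabla_r\Big(\frac{g_\varepsilon}{\mathbf{M}}\Big)=\psi_\varepsilon\nabla_r\Big(\frac{f_\varepsilon}{\mathbf{M}}\Big)+\frac{f_\varepsilon}{\mathbf{M}}\nabla_r\psi_\varepsilon .$$
Squaring and integrating against $\mathbf{M}$ gives $[g_\varepsilon]_V^2\le 2\|\psi_\varepsilon\|_\infty^2[f_\varepsilon]_V^2+2\|\nabla\psi_\varepsilon\|_\infty^2\|f_\varepsilon\|_H^2$. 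This holds first for smooth approximants, using the density $\overline{C^1(\overline D)}^{\mathcal{H}^1_\mathbf{M}}=\mathcal{H}^1_\mathbf{M}$, and then passes to the limit. This yields $g_\varepsilon\in L^2(0,T;V)$. The $x$-dependence plays no role, since $\psi_\varepsilon$ depends only on $r$.

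The main obstacle is the boundary behaviour, namely the claim that $\nabla\psi_\varepsilon$ does not blow up at $|r|=1$. This is exactly where the cut-off is used: the identical singular behaviour of $\mathbf{M}$ and $\mathbf{M}_\varepsilon$ in the annulus makes their ratio constant there. I would verify this carefully from \eqref{weight-inter}, splitting $\int_0^{|r|}$ at $1-2\varepsilon$. The constants depend on $\varepsilon$, which is harmless here since $\varepsilon$ is fixed in the statement.
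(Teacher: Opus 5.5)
Your proposal is correct and is essentially the paper's proof. The paper also writes $\nabla_r(g_\varepsilon/\mathbf{M})=\psi_\varepsilon\nabla_r(f_\varepsilon/\mathbf{M})+(f_\varepsilon/\mathbf{M})\nabla_r\psi_\varepsilon$ with $\psi_\varepsilon=\mathbf{M}/\mathbf{M}_\varepsilon$, bounds $\psi_\varepsilon$ via \eqref{relation-weghts}, and controls $\nabla_r\psi_\varepsilon=\psi_\varepsilon h_\varepsilon$ through the cut-off bound $|h_\varepsilon|\le \kappa\lambda\beta/(2\tau\varepsilon)$, which is equivalent to your observation that $\psi_\varepsilon$ is constant on $\{1-|r|\le\varepsilon\}$. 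Two small points: the density step is unnecessary, since the product rule for weak derivatives with a $C^1(\overline D)$ multiplier holds directly; and to get the constant ratio you should split the integral at $1-\varepsilon$, not $1-2\varepsilon$.
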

\begin{proof}
Let $\varepsilon>0,$   thanks to \eqref{relation-weghts} and since $f_\varepsilon\in C([0, T];H),$ one gets $g_\varepsilon \in  C([0, T];H).$ On the other hand, we have
  \begin{align*}
      \nabla_r(\dfrac{g_\varepsilon}{\mathbf{M}})=\nabla_r(\dfrac{f_\varepsilon}{\mathbf{M}_\varepsilon})=\nabla_r(\dfrac{f_\varepsilon}{\mathbf{M}})\dfrac{\mathbf{M}}{\mathbf{M}_\varepsilon}+\dfrac{f_\varepsilon}{\mathbf{M}} \nabla_r(\dfrac{\mathbf{M}}{\mathbf{M}_\varepsilon})= \nabla_r(\dfrac{f_\varepsilon}{\mathbf{M}})\dfrac{\mathbf{M}}{\mathbf{M}_\varepsilon}+\dfrac{f_\varepsilon}{\mathbf{M}} \dfrac{\mathbf{M}}{\mathbf{M}_\varepsilon}h_\varepsilon(r),
  \end{align*}
  where $h_\varepsilon(r)=\varphi_\varepsilon^2\dfrac{-\kappa r}{(1-\vert r\vert^2)} \dfrac{\frac{\lambda\beta}{2 \tau}\vert r\vert^2 }{(1+\frac{\lambda\beta}{2 \tau}\varphi_\varepsilon^2 \vert r\vert^2)}.$ Notice that $\vert h_\varepsilon\vert \leq \dfrac{\lambda\beta\kappa}{2 \tau}\dfrac{1}{\varepsilon} $.  Therefore, \autoref{Lem-appropriate-test-function} is a consequence of    $f_\varepsilon\in L^2(0, T;V)$ and  \eqref{relation-weghts}. 
\end{proof}
    \subsubsection*{Uniform estimate with respect to $\varepsilon$}
  As a consequence of \autoref{Lem-appropriate-test-function}, $g_\varepsilon$ is an admissible test function in \eqref{eq-to-test}. Therefore, we have
  \begin{align}
    \langle\dfrac{\partial f_\varepsilon}{\partial t},g_\varepsilon\rangle_{V^\prime,V}=-\int_{\mathbb{T}^2}\int_D\big ( \dfrac{\k}{\beta} f_\varepsilon F(r)r+\dfrac{1}{\beta}\nabla_rf_\varepsilon+ \dfrac{1}{2}\varphi_\varepsilon^2 A(r) \nabla_rf_\varepsilon\big)\cdot \nabla_r(\dfrac{f_\varepsilon}{\mathbf{M}_\varepsilon}) drdx.
\end{align}
By using \eqref{eqn-weight-eps}, the right hand side becomes
\begin{align*}
    &-\int_{\mathbb{T}^2}\int_D\big ( \dfrac{\k}{\beta} f_\varepsilon F(r)r+\dfrac{1}{\beta}\nabla_rf_\varepsilon+ \dfrac{1}{2}\varphi_\varepsilon^2 A(r) \nabla_rf_\varepsilon\big)\cdot \nabla_r(\dfrac{f_\varepsilon}{\mathbf{M}_\varepsilon}) drdx\\&=-\int_{\mathbb{T}^2}\int_D \mathbf{M}_\varepsilon (\dfrac{1}{\beta}I+\dfrac{1}{2}\varphi_\varepsilon^2A(r))\nabla_r(\dfrac{f_\varepsilon}{\mathbf{M}_\varepsilon})\cdot \nabla_r(\dfrac{f_\varepsilon}{\mathbf{M}_\varepsilon}) drdx\\
    &\leq-\int_{\mathbb{T}^2}\int_D \mathbf{M}_\varepsilon (\dfrac{1}{\beta}+\dfrac{k_T}{2}\varphi_\varepsilon^2\vert r\vert^2)\vert \nabla_r(\dfrac{f_\varepsilon}{\mathbf{M}_\varepsilon})\vert^2 dr dx.
\end{align*}
Now, recall that  $f_\varepsilon \in L^2(0, T;V)$ and $\partial_tf_\varepsilon \in L^2(0, T;V^\prime)$ thanks to   \eqref{eq-to-test}. Let $t\in [0,T]$,  thanks to classical results based on  Lions-Guelfand triple (see \textit{e.g.} \cite[Theorem 3.1.]{Lions-Magenes}), we get
\begin{align*}
  2  \int_0^t \langle\dfrac{\partial f_\varepsilon}{\partial t},g_\varepsilon\rangle_{V^\prime,V} ds= \int_{\mathbb{T}^2}\int_D\dfrac{f_\varepsilon^2(t)}{\mathbf{M}_\varepsilon}drdx-\int_{\mathbb{T}^2}\int_D\dfrac{f_0^2}{\mathbf{M}_\varepsilon}drdx.
\end{align*}
Therefore, after using  $\int_{\mathbb{T}^2}\int_D\dfrac{f_0^2}{\mathbf{M}_\varepsilon}drdx \leq \dfrac{1}{C}\Vert f_0\Vert_H^2$, we get
\begin{align}\label{key-est-eps-weight}
  \sup_{t\in[0,T]}    \int_{\mathbb{T}^2}\int_D\dfrac{f_\varepsilon^2(t)}{\mathbf{M}_\varepsilon}drdx+2\int_0^T\int_{\mathbb{T}^2}\int_D \mathbf{M}_\varepsilon (\dfrac{1}{\beta}+\dfrac{k_T}{2}\varphi_\varepsilon^2\vert r\vert^2)\vert \nabla_r(\dfrac{f_\varepsilon}{\mathbf{M}_\varepsilon})\vert^2 dr dx ds\leq \dfrac{1}{C}\Vert f_0\Vert_H^2.
\end{align}
As a sequence of \eqref{key-est-eps-weight} and \eqref{relation-weghts}, we get
\begin{align}
     &\sup_{t\in[0,T]}    \int_{\mathbb{T}^2}\int_D\dfrac{f_\varepsilon^2(t)}{\mathbf{M}_\varepsilon^2} \mathbf{M}drdx+\dfrac{2 }{\beta}\int_0^T\int_{\mathbb{T}^2}\int_D \mathbf{M}\vert \nabla_r(\dfrac{f_\varepsilon}{\mathbf{M}_\varepsilon})\vert^2 dr dx ds\leq \dfrac{1}{C^2}\Vert f_0\Vert_H^2,\label{est-limit-1}\\
      &\sup_{t\in[0,T]}    \int_{\mathbb{T}^2}\int_D\dfrac{f_\varepsilon^2(t)}{\widetilde{\mathbf{M}}_0^2} \widetilde{\mathbf{M}}_0drdx\leq \dfrac{1}{C}\Vert f_0\Vert_H^2 \text{ and } \int_0^T\int_{\mathbb{T}^2}\int_D \mathbf{M}_\varepsilon\vert \nabla_r(\dfrac{f_\varepsilon}{\mathbf{M}_\varepsilon})\vert^2 dr dx ds\leq \dfrac{\beta}{2C}\Vert f_0\Vert_H^2.  \label{est-limit-2}
\end{align}
The  inequalities \eqref{est-limit-1} and  \eqref{est-limit-2} ensure that 
\begin{align*}
(\dfrac{f_\varepsilon}{\mathbf{M}_\varepsilon})_\varepsilon  &\text{ is bounded in }  L^\infty(0, T;L^2(\mathbb{T}^2,L^2(D;\mathbf{M}dr)))\cap L^2(0, T;L^2(\mathbb{T}^2,H^1(D;\mathbf{M}dr))),\\
(\dfrac{f_\varepsilon}{ \widetilde{\mathbf{M}}_0})_\varepsilon  &\text{ is bounded in }  L^\infty(0, T;L^2(\mathbb{T}^2,L^2(D; \widetilde{\mathbf{M}}_0dr))),\\
(\sqrt{\mathbf{M}_\varepsilon}\nabla_r(\dfrac{f_\varepsilon}{\mathbf{M}_\varepsilon}))_\varepsilon&\text{ is bounded in }  \big(L^2(0, T;L^2(\mathbb{T}^2\times D))\big)^2.
\end{align*}

  \subsubsection*{Passage to the limit as $\varepsilon\to 0$ in \eqref{eq-to-test}}
By using Banach–Alaoglu theorem, there exist a subsequence of  $(G_\varepsilon:=\dfrac{f_\varepsilon}{\mathbf{M}_\varepsilon})_\varepsilon,$ $(\mathbf{h}_\varepsilon:=\dfrac{f_\varepsilon}{ \widetilde{\mathbf{M}}_0})_\varepsilon $  and $(\sqrt{\mathbf{M}_\varepsilon}\nabla_r(\dfrac{f_\varepsilon}{\mathbf{M}_\varepsilon}))_\varepsilon$ ( denoted by the same way), $G\in L^\infty(0, T;L^2(\mathbb{T}^2,L^2(D;\mathbf{M}dr)))\cap L^2(0, T;L^2(\mathbb{T}^2,H^1(D;\mathbf{M}dr)))$, $\mathbf{h}\in L^\infty(0, T;L^2(\mathbb{T}^2,L^2(D; \widetilde{\mathbf{M}}_0dr)))$ and  $\mathcal{G}\in \big(L^2(0, T;L^2(\mathbb{T}^2\times D))\big)^2$  such that, as $\varepsilon \to 0,$ the following convergences hold
	\begin{align}
		G_\varepsilon \rightharpoonup G &\text{		in	}  L^2(0, T;L^2(\mathbb{T}^2,H^1(D;\mathbf{M}dr))),\label{cv-appro-3-eps-G}\\
		G_\varepsilon \overset{*}{\rightharpoonup} G &\text{		in	}  L^\infty(0, T;L^2(\mathbb{T}^2,L^2(D;\mathbf{M}dr))),\label{cv-appro-4-eps-G}\\
        \mathbf{h}_\varepsilon \overset{*}{\rightharpoonup} \mathbf{h} &\text{		in	}  L^\infty(0, T;L^2(\mathbb{T}^2,L^2(D;\widetilde{\mathbf{M}}_0dr)))\label{cv-appro-5-eps-h},\\
        \sqrt{\mathbf{M}_\varepsilon}\nabla_r(\dfrac{f_\varepsilon}{\mathbf{M}_\varepsilon}) \rightharpoonup  \mathcal{G} &\text{  in }  \big(L^2(0, T;L^2(\mathbb{T}^2\times D))\big)^2.\label{cv-appro-6-eps-h}
	\end{align} Notice that \eqref{est-limit-2} ensures 
$  (f_\varepsilon)_\varepsilon  \text{ is bounded in }  L^\infty(0, T;H_0).$
     By the definition of the space $H_0$ (see \eqref{spaces-2}),  there exists $f=\widetilde{\mathbf{M}}_0\mathbf{h}\in L^\infty(0, T;H_0)$ such that
	\begin{align}
		f_\varepsilon= \widetilde{\mathbf{M}}_0 \mathbf{h}_\varepsilon\overset{*}{\rightharpoonup} \widetilde{\mathbf{M}}_0\mathbf{h}=f &\text{		in	}  L^\infty(0, T;H_0).\label{cv-appro-h-KEY}
	\end{align} 
    \subsubsection*{Identification of the limits}
 By using \eqref{relation-weghts},  it is clear  that $ L^2(D;\widetilde{\mathbf{M}}_0dr) \hookrightarrow L^2(D;\mathbf{M}dr).$ First, let us prove  that $G=\dfrac{f}{\widetilde{\mathbf{M}}_0}$. Indeed, we have
 $G_\varepsilon=\dfrac{f_\varepsilon}{ \widetilde{\mathbf{M}}_0}\dfrac{\widetilde{\mathbf{M}}_0}{\mathbf{M}_\varepsilon}=\mathbf{h}_\varepsilon\dfrac{\widetilde{\mathbf{M}}_0}{\mathbf{M}_\varepsilon}$ with  $\mathbf{h}_\varepsilon \overset{*}{\rightharpoonup} \mathbf{h}$		in	 $ L^\infty(0, T;L^2(\mathbb{T}^2,L^2(D;\widetilde{\mathbf{M}}_0dr)))$ and 
 \begin{align*}    
 \dfrac{\widetilde{\mathbf{M}}_0}{\mathbf{M}_\varepsilon}(r)=\exp{\big(\kappa\sigma\int_0^{\vert r\vert} \dfrac{(1-\varphi_\varepsilon^2)s^3 ds}{(1-s^2)(1+\sigma s^2)(1+\sigma\varphi_\varepsilon^2 s^2)}  \big)}, \quad \sigma=\frac{\lambda\beta}{2 \tau}.
 \end{align*}
 By noticing that 
 \begin{align*}
 (\int_0^{\vert r\vert} \dfrac{(1-\varphi_\varepsilon^2)s^3 ds}{(1-s^2)(1+\sigma s^2)(1+\sigma\varphi_\varepsilon^2 s^2)})^2 \leq \vert r\vert^5(\int_0^1 (1-\varphi_\varepsilon^2)^2 ds) (\int_0^{\vert r\vert} \dfrac{sds}{(1-s^2)^2}) \leq \dfrac{\vert r\vert^7}{2(1-\vert r\vert^2)}\varepsilon,     
 \end{align*}
 one gets that $\dfrac{\widetilde{\mathbf{M}}_0}{\mathbf{M}_\varepsilon} \to 1$ (strongly)  in $C_{loc}(D)$, as $\varepsilon\to0.$ Next, let $\phi\in C^\infty_c(\mathbb{T}^2\times D)$ and $\xi\in C^\infty_c(]0,T[)$ and note that 
 \begin{align*}
 \int_0^T \int_{\mathbb{T}^2}\int_D G\phi\xi dxdrds &=\lim_{\varepsilon\downarrow 0}   \int_0^T \int_{\mathbb{T}^2}\int_D G_\varepsilon\phi\xi dxdrds\\&=\lim_{\varepsilon\downarrow 0} \int_0^T \int_{\mathbb{T}^2}\int_D\mathbf{h}_\varepsilon\dfrac{\widetilde{\mathbf{M}}_0}{\mathbf{M}_\varepsilon}\phi\xi dxdrds=\int_0^T \int_{\mathbb{T}^2}\int_D\mathbf{h}\phi\xi dxdrds,
 \end{align*}
after using \eqref{cv-appro-4-eps-G}  and \eqref{cv-appro-5-eps-h}.
Consequently, we get $G=\mathbf{h}=\dfrac{f}{\widetilde{\mathbf{M}}_0},$ where 
   $f=\widetilde{\mathbf{M}}_0\mathbf{h}\in L^\infty(0, T;H_0).$\\
   Secondly, let us prove that $ \mathcal{G}=\sqrt{\widetilde{\mathbf{M}}_0}\nabla_r(\dfrac{f}{\widetilde{\mathbf{M}}_0})$. Indeed, from \eqref{weight-inter} and \eqref{relation-weghts} one gets that 
   \begin{align}\label{cv-ident-1}
    \sqrt{\mathbf{M}_\varepsilon} \to \sqrt{\widetilde{\mathbf{M}}_0} \text{ in } L^p(D) \text{ for any }  1\leq p<+\infty.
   \end{align}Moreover,  we have 
   \begin{align}\label{cv-ident-2}
          \nabla_r(\dfrac{f_\varepsilon}{\mathbf{M}_\varepsilon}) \rightharpoonup \nabla_r(\dfrac{f}{\widetilde{\mathbf{M}}_0}) \text{ in } (L^2(0, T;L^2(\mathbb{T}^2,L^2(D;\mathbf{M}dr))))^2,
   \end{align}
     thanks to  \eqref{cv-appro-3-eps-G}. Let  $\phi\in (C^\infty_c(\mathbb{T}^2\times D))^2$ and $\xi\in C^\infty_c(]0,T[)$. Therefore,  \eqref{cv-appro-6-eps-h}, \eqref{cv-ident-1} and \eqref{cv-ident-2} ensure
     \begin{align*}
         \int_0^T \int_{\mathbb{T}^2}\int_D \mathcal{G}\phi\xi dxdrds &=\lim_{\varepsilon\downarrow 0}   \int_0^T \int_{\mathbb{T}^2}\int_D   \sqrt{\mathbf{M}_\varepsilon}\nabla_r(\dfrac{f_\varepsilon}{\mathbf{M}_\varepsilon})\phi\xi dxdrds=\int_0^T \int_{\mathbb{T}^2}\int_D\sqrt{\widetilde{\mathbf{M}}_0}\nabla_r(\dfrac{f}{\widetilde{\mathbf{M}}_0})\phi\xi dxdrds,
     \end{align*}
which guarantees the claim.
In conclusion, we proved
   as $\varepsilon \to 0,$ it holds
 \begin{align}
			f_\varepsilon&\overset{*}{\rightharpoonup} f \text{		in	}  L^\infty(0, T;H_0),\label{cv-appro-5-eps-h_*}\\
           \sqrt{\mathbf{M}_\varepsilon}\nabla_r(\dfrac{f_\varepsilon}{\mathbf{M}_\varepsilon}) &\rightharpoonup  \sqrt{\widetilde{\mathbf{M}}_0}\nabla_r(\dfrac{f}{\widetilde{\mathbf{M}}_0})  \text{  in }  \big(L^2(0, T;L^2(\mathbb{T}^2\times D))\big)^2.\label{cv-appro-6-eps-h-*}
	\end{align}
    In particular, \eqref{cv-appro-6-eps-h-*} ensures $\int_0^T\int_{\mathbb{T}^2}\int_D \widetilde{\mathbf{M}}_0\vert \nabla_r(\dfrac{f}{\widetilde{\mathbf{M}}_0})\vert^2 dr dx ds\leq \dfrac{\beta}{2C}\Vert f_0\Vert_H^2$.\\
    
       Let $\widetilde{\phi}\in L^2(\T^2)\otimes C^1(\overline{D})$ and set $\Phi=\mathbf{M}\widetilde{\phi}.$ Then, it is easy to check that $\Phi   \in V.$
    Again, let $\xi\in C^\infty_c(]0,T[)$ and 
    set $\Phi=\mathbf{M}\widetilde{\phi}$ in \eqref{eq-to-test}. Next,   multiply by $\xi$ the equation \eqref{eq-to-test} and integrate from $0$ to $T$ to get
    \begin{align*}
   \int_0^T \langle\dfrac{\partial f_\varepsilon}{\partial t},\mathbf{M}\widetilde{\phi}\rangle_{V^\prime,V} \xi dt=-\int_0^T\int_{\mathbb{T}^2}\int_D\mathbf{M}_\varepsilon (\dfrac{1}{\beta}I+\dfrac{1}{2}\varphi_\varepsilon^2A(r))\nabla_r(\dfrac{f_\varepsilon}{\mathbf{M}_\varepsilon})\cdot \nabla_r\widetilde{\phi} \xi drdx dt.
\end{align*}
First, recall that  $\varphi_\varepsilon \to 1$   in  in $L^p(D)$ for any $1\leq p<+\infty.$
Next, we perform an  integration by parts in time, we get after using \eqref{cv-appro-5-eps-h_*}
\begin{align*}
      \int_0^T \langle\dfrac{\partial f_\varepsilon}{\partial s},\mathbf{M}\widetilde{\phi}\rangle_{V^\prime,V} \xi ds&=- \int_0^T (f_\varepsilon,\mathbf{M}\widetilde{\phi})_{H}  \xi^\prime(s) ds=- \int_0^T  \int_{\mathbb{T}^2}\int_D  f_\varepsilon \widetilde{\phi}\xi^\prime(s) drdxds\\
\text{ ( recall that }  H_0\hookrightarrow L^2(\T^2\times D))   &= - \int_0^T (f_\varepsilon,\widetilde{\phi})  \xi^\prime(s) ds  \to  - \int_0^T (f,\widetilde{\phi})  \xi^\prime(s) ds,
\end{align*}
 and, by using   \eqref{cv-appro-6-eps-h-*}, we get
    \begin{align}
   \lim_{\varepsilon \downarrow 0}     &-\int_0^T\int_{\mathbb{T}^2}\int_D\mathbf{M}_\varepsilon (\dfrac{1}{\beta}I+\dfrac{1}{2}\varphi_\varepsilon^2A(r))\nabla_r(\dfrac{f_\varepsilon}{\mathbf{M}_\varepsilon})\cdot \nabla_r\widetilde{\phi} \xi drdx dt\notag\\
   &=-\int_0^T\int_{\mathbb{T}^2}\int_D\widetilde{\mathbf{M}}_0 (\dfrac{1}{\beta}I+\dfrac{1}{2}A(r))\nabla_r(\dfrac{f}{\widetilde{\mathbf{M}}_0})\cdot \nabla_r\widetilde{\phi} \xi drdx dt.\label{eq-regularity}
    \end{align}
   Since $    \overline{C^1(\overline{D})}^{\mathcal{H}^1_{\widetilde{\mathbf{M}}_0}}=\mathcal{H}^1_{\widetilde{\mathbf{M}}_0}$ (see \cite[Proposition B.2]{Masmoudi2013}) and taking into account that $f \in L^\infty(0, T;H_0)\cap   L^2(0, T;V_0)$, we get
   \begin{align*}
       - \int_0^T (f,\phi)_{H_0}  \xi^\prime(s) ds=\int_0^T\int_{\mathbb{T}^2}\int_D\widetilde{\mathbf{M}}_0 (\dfrac{1}{\beta}I+\dfrac{1}{2}A(r))\nabla_r(\dfrac{f}{\widetilde{\mathbf{M}}_0})\cdot \nabla_r\big( \dfrac{\phi}{\widetilde{\mathbf{M}}_0}\big) \xi drdx dt,\quad \forall \phi \in V_0.
   \end{align*}
In particular, $f \in L^\infty(0, T;H_0)\cap   L^2(0, T;V_0)$  solves in the sense of distribution (in time) 
  \begin{align}\label{fom-limit-1}
       \partial_t f= \Div_r\bigl(\widetilde{\mathbf{M}}_0 (\dfrac{1}{\beta}I+\dfrac{1}{2}A(r))\nabla_r(\dfrac{f}{\widetilde{\mathbf{M}}_0})\bigr) \text{ in } (V_0)^\prime.
    \end{align}
 Now, let $\phi \in V_0$. From \eqref{fom-limit-1}, we have also $\partial_tf\in L^2(0, T;(V_0)^\prime).$ Thus by considering the Lions-Guelfand triple $V_0\hookrightarrow H_0\hookrightarrow (V_0)^\prime$ and taking $\xi\in C^\infty([0,T])$, we write
   for any $t\in ]0,T]$
      \begin{align}\label{IPPtimez-eps-1}
	 \quad	\int_0^t	\langle\dfrac{\partial f}{ds}(s),\phi\rangle_{(V_0)^\prime,V_0}\xi(s) ds&=-\int_0^t(f,\phi)_{H_0}\xi^\prime ds+(  f(t) ,\phi)_{H_0}\xi(t)- (  f(0) ,\phi)_{H_0}\xi(0).
	\end{align}
    Now, let  $\widetilde{\phi}\in L^2(\T^2)\otimes C^1(\overline{D})$ and notice that, in particular, for $\phi=\widetilde{\mathbf{M}}_0\widetilde{\phi}$ in  \eqref{IPPtimez-eps-1}, we have 
         \begin{align}\label{IPPtimez-eps}
	 \quad	\int_0^t	\langle\dfrac{\partial f}{ds}(s),\widetilde{\mathbf{M}}_0\widetilde{\phi}\rangle_{(V_0)^\prime,V_0}\xi(s) ds&=-\int_0^t(f,\widetilde{\phi})\xi^\prime ds+(  f(t) ,\widetilde{\phi})\xi(t)- (  f(0) ,\widetilde{\phi})\xi(0).
	\end{align}
    On the other hand,       set $\Phi=\mathbf{M}\widetilde{\phi}  \in V.$
For any 
      $\xi\in C^\infty([0,T])$ we have
        \begin{align}\label{IPPtimez-eps-2}
	 \quad	\int_0^t	\langle\dfrac{\partial f_\varepsilon}{ds}(s),\mathbf{M}\widetilde{\phi}\rangle_{V^\prime,V}\xi(s) ds&=-\int_0^t(f_\varepsilon,\widetilde{\phi})\xi^\prime ds+(  f_\varepsilon(t) ,\widetilde{\phi})\xi(t)- (  f_0,\widetilde{\phi})\xi(0).
	\end{align}
    By using \eqref{IPPtimez-eps}, \eqref{IPPtimez-eps-2}, \eqref{eq-to-test}  and \eqref{fom-limit-1}, we get for any $\widetilde{\phi}\in L^2(\T^2)\otimes C^1(\overline{D})$
     \begin{align}
          \lim_{\varepsilon \downarrow 0}  (  f_\varepsilon(t) ,\widetilde{\phi})\xi(t)=(  f(t) ,\widetilde{\phi})\xi(t)- ( f_0- f(0) ,\widetilde{\phi})\xi(0), \quad \forall t\in ]0,T].
    \end{align}
  Finally, by taking into account that $f_\varepsilon, f\in L^\infty(0,T,H_0); \varepsilon>0$ and  $    \overline{C^1(\overline{D})}^{\mathcal{H}^1_{\widetilde{\mathbf{M}}_0}}=\mathcal{H}^1_{\widetilde{\mathbf{M}}_0}$, we get
    \begin{align}
          \lim_{\varepsilon \downarrow 0}  (  f_\varepsilon(t) ,\phi)_{H_0}\xi(t)=(  f(t) ,\phi)_{H_0}\xi(t)- ( f_0- f(0) ,\phi)_{H_0}\xi(0), \quad  \forall \phi\in H_0, \quad \forall t\in ]0,T].
    \end{align}
    By appropriate choice of $\xi$, we get first $f(0)=f_0$ and then $f_\varepsilon(t)  \rightharpoonup f(t)$  in $H_0$ for any $t\in [0,T].$\\
 Finally,  we prove the following.
  \begin{proposition}(Stability and uniqueness)\label{propo-stab-}
      Let $f_1$ and $f_2$ be two solutions to  \eqref{fom-limit-1} with initial data $f_0^1\in H_0$ and $f_0^2\in H_0$ respectively. Then, we have
      \begin{align*}
          \Vert f_1(t)-f_2(t)\Vert_{H_0}^2+2\int_0^t\int_{\mathbb{T}^2}\int_D\bigl(\widetilde{\mathbf{M}}_0 (\dfrac{1}{\beta}+\dfrac{k_T}{2}\vert r\vert^2)\vert \nabla_r(\dfrac{f_1-f_2}{\widetilde{\mathbf{M}}_0})\vert^2 drdxds \leq   \Vert f_0^1-f_0^2\Vert_{H_0}^2.
      \end{align*}
      In particular, the solution to \eqref{fom-limit-1} is unique.
  \end{proposition}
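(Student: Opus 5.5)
By linearity, $w:=f_1-f_2$ is a solution of \eqref{fom-limit-1} with initial datum $w(0)=f_0^1-f_0^2\in H_0$ and with the regularity $w\in L^\infty(0,T;H_0)\cap L^2(0,T;V_0)$. The plan is the standard Lions energy identity for this difference, followed by a sign argument on the dissipation; Gr\"onwall will not be needed.

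\emph{Step 1 (test with $w$).} The identity \eqref{fom-limit-1} gives $\partial_t w\in L^2(0,T;(V_0)')$. Together with $w\in L^2(0,T;V_0)$ and the Lions--Guelfand triple $V_0\hookrightarrow H_0\hookrightarrow (V_0)'$, \cite[Theorem 3.1.]{Lions-Magenes} yields $w\in C([0,T];H_0)$ and
\begin{align*}
\|w(t)\|_{H_0}^2-\|w(0)\|_{H_0}^2 = 2\int_0^t\langle\partial_s w,w\rangle_{(V_0)',V_0}\,ds .
\end{align*}
Since $w(s)\in V_0$ is an admissible test function, the weak formulation (with $\phi=w$) gives
\begin{align*}
\langle\partial_s w,w\rangle=-\int_{\mathbb{T}^2}\int_D \widetilde{\mathbf{M}}_0\Big(\frac{1}{\beta}I+\frac12 A(r)\Big)\nabla_r\Big(\frac{w}{\widetilde{\mathbf{M}}_0}\Big)\cdot\nabla_r\Big(\frac{w}{\widetilde{\mathbf{M}}_0}\Big)\,dr\,dx .
\end{align*}

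\emph{Step 2 (coercivity of $A$).} This is the only real computation and should be easy. From $A(r)=k_T(3|r|^2I-2r\otimes r)$, for every $v\in\mathbb{R}^2$,
\begin{align*}
A(r)v\cdot v=k_T\big(3|r|^2|v|^2-2(r\cdot v)^2\big)\ge k_T|r|^2|v|^2,
\end{align*}
using Cauchy--Schwarz in the form $(r\cdot v)^2\le |r|^2|v|^2$. Hence the dissipation integrand is bounded below by $\widetilde{\mathbf{M}}_0\big(\frac1\beta+\frac{k_T}{2}|r|^2\big)\big|\nabla_r(w/\widetilde{\mathbf{M}}_0)\big|^2$.

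Inserting this into Step 1 gives exactly the stated inequality with right-hand side $\|f_0^1-f_0^2\|_{H_0}^2$. Uniqueness follows by taking $f_0^1=f_0^2$.

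\emph{Main obstacle.} The delicate point is the legitimacy of the energy identity: one must make sure that $\phi=w(s)$ is allowed in the weak formulation for all $\phi\in V_0$. This was obtained just above by the density $\overline{C^1(\overline D)}^{\mathcal{H}^1_{\widetilde{\mathbf{M}}_0}}=\mathcal{H}^1_{\widetilde{\mathbf{M}}_0}$. One must also ensure that the bilinear form is bounded on $V_0$, which holds because $A$ is bounded on $D$. The no-flux boundary condition is encoded in this weak form, so no boundary terms arise.
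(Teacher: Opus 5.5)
Your proof is correct and matches the paper's: test \eqref{fom-limit-1} with $f_1-f_2$, bound the dissipation from below via $A(r)v\cdot v\ge k_T|r|^2|v|^2$, and conclude with the Lions--Magenes energy identity $2\int_0^t\langle\partial_s f,f\rangle\,ds=\Vert f(t)\Vert_{H_0}^2-\Vert f(0)\Vert_{H_0}^2$. The only difference is that you spell out the Cauchy--Schwarz step behind the lower bound on $A$, which the paper leaves implicit.
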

  \subsubsection*{Proof of \autoref{propo-stab-}}
  Set $f=f_1-f_2$ and $f_0=f_0^1-f_0^2$.
    Let  $t\in ]0,T[$ and notice that $f$ is an admissible test function in \eqref{fom-limit-1}. Therefore, after integration from $0$ to $t$ we get 
       \begin{align*}
        \int_0^t \langle\partial_s f,f\rangle_{(V_0)^\prime,V_0} ds=-\int_0^t\int_{\mathbb{T}^2}\int_D\bigl(\widetilde{\mathbf{M}}_0 (\dfrac{1}{\beta}I+\dfrac{1}{2}A(r))\nabla_r(\dfrac{f}{\widetilde{\mathbf{M}}_0})\bigr)\cdot \nabla_r\left(\dfrac{f}{\widetilde{\mathbf{M}}_0}\right) drdxds 
\end{align*}
Notice that 
\begin{align*}
    -&\int_0^t\int_{\mathbb{T}^2}\int_D\bigl(\widetilde{\mathbf{M}}_0 (\dfrac{1}{\beta}I+\dfrac{1}{2}A(r))\nabla_r(\dfrac{f}{\widetilde{\mathbf{M}}_0})\bigr)\cdot \nabla_r\left(\dfrac{f}{\widetilde{\mathbf{M}}_0}\right) drdxds\\&\leq -\int_0^t\int_{\mathbb{T}^2}\int_D\bigl(\widetilde{\mathbf{M}}_0 (\dfrac{1}{\beta}+\dfrac{k_T}{2}\vert r\vert^2)\vert \nabla_r(\dfrac{f}{\widetilde{\mathbf{M}}_0})\vert^2 drdxds.
\end{align*}
Thanks to classical results based on  Lions-Guelfand
triple (see \textit{e.g.} \cite[Theorem 3.1.]{Lions-Magenes}), we have  $$2  \int_0^t \langle\partial_s f,f\rangle_{(V_0)^\prime,V_0} ds=\Vert f(t)\Vert_{H_0}^2-\Vert f_0\Vert_{H_0}^2,$$ 
which ensures the claim.
 
\section{Singular limit as $\tau \downarrow 0$ and stationary density (\autoref{Thm-singular-limit-main})}\label{Sec-singular-limit}
  From \autoref{section-remove-cut-off},   recall that
  \begin{align*}
  \widetilde{\mathbf{M}}_0(r)=\left(
\dfrac{1 - \vert r\vert^{2}}{1 + \frac{\lambda\beta}{2 \tau}\, \vert r\vert^{2}}
\right)^{\frac{\kappa}{2\left(1 + \frac{\lambda\beta}{2 \tau}\right)}}.
    \end{align*}
  Let $\tau>0,$      in the regime where the relaxation time of polymers and the dominant time-scale of the small scale turbulent flow satisfies $\beta=\zeta \tau, \zeta>0$,  $       \widetilde{\mathbf{M}}_0$ and the matrix $A$ become
        \begin{align*}
       \widetilde{\mathbf{M}}_0(r)=\left(
\dfrac{1 - \vert r\vert^{2}}{1 + \alpha \vert r\vert^{2}}
\right)^{\frac{\kappa}{2\left(1 + \alpha \right)}},\quad     \dfrac{1}{2}A(r)
		=\dfrac{\alpha}{\zeta \tau} (3 \left\vert r\right\vert^2I-2r\otimes r),\quad \alpha=\frac{\zeta\lambda}{2}.
        \end{align*}
        Now, we consider the normalized $\widetilde{\mathbf{M}}_0$ denoted by $\mathbf{M}_0=\dfrac{1}{\mathbf{Z}}\widetilde{\mathbf{M}}_0$ where $\mathbf{Z}=\int_D \widetilde{\mathbf{M}}_0 dr$.\\
        
      Thanks to   \autoref{THm-eps-nocut}, there exists  a unique $f:=f_\tau$ such that   $f_\tau \in L^2(0, T;V_0) \cap C([0, T];H_0)$, $f_\tau(0)=f_{\tau,0} $ and solves
        \begin{align}\label{limit-PDE-final-NOCUT-V1*****}
    \langle\partial_t f_\tau,\Phi\rangle_{(V_0)^\prime,V_0}=- \dfrac{1}{\zeta \tau}\int_{\mathbb{T}^2}\int_D\bigl(\mathbf{M}_0 (I+\mathcal{A}(r))\nabla_r(\dfrac{f_\tau}{\mathbf{M}_0})\bigr)\cdot \nabla_r\left(\dfrac{\Phi}{\mathbf{M}_0}\right) drdx, \quad \forall \Phi\in V_0,
\end{align}
where we use the notation 
\begin{align}\label{limit-matrix}
 \mathcal{A}(r)= \alpha(3 \left\vert r\right\vert^2I-2r\otimes r).   
\end{align}
In the following, we identify the limit as $\tau \downarrow 0$ in \eqref{limit-PDE-final-NOCUT-V1*****}. Before that, let us make the following remark.
\begin{remark}
    From \eqref{limit-PDE-final-NOCUT-V1*****}, we can guess formally  that the penalization $\tau \to 0$ leads to 
$$f_\tau(t,x,r) \simeq \rho(t,x)p(r), \quad p\in Ker(\mathcal{L}),$$
where $\mathcal{L}(g)=-\Div_r\bigl(\mathbf{M}_0 (I+\mathcal{A}(r))\nabla_r(\dfrac{g}{\mathbf{M}_0})\bigr)$ is the unbounded operator defined on  $L^2_{\mathbf{M}_0}$ with domain $\mathcal{D}(\mathcal{L})=\{ g\in H^1_{\mathbf{M}_0};\quad \mathcal{L}(g)\in L^2_{\mathbf{M}_0} \text{ and }  \mathbf{M}_0 (I+\mathcal{A}(r))\nabla_r(\dfrac{g}{\mathbf{M}_0})\vert_{\partial D}=0\}.$ One can check that $-\mathcal{L}$ is self adjoint and positive  on $L^2_{\mathbf{M}_0}$, we refer to \cite[Prop. 3.6.]{Masmoudi2008} for the proof in the case of  similar operator, see also \cite[Prop. 13]{tahraoui2025small}. Furthermore, it is not difficult to verify that
$\ker \mathcal{L} = \{\, c \, \mathbf{M}_0 \; : \; c \in \mathbb{R} \,\}.$
\end{remark}

\subsection*{Statement of the result}
Consider the following assumption.
\begin{itemize}
    \item[(H$_1$)] There exists $f_0\in H_0$ such that      $       \displaystyle\lim_{\tau\to 0}\Vert f_{\tau,0}-f_0\Vert_{H_0} =0.  $
\end{itemize}
Notice that (H$_1$) ensures  the existence of  $\mathbf{\Lambda}>0$ independent of $\tau$  such that $\displaystyle\sup_{\tau>0}\Vert f_{\tau,0} \Vert_{H_0}^2\leq  \mathbf{\Lambda}.$
\begin{remark}
    Since $H_0 \hookrightarrow L^2(\mathbb{T}^2,L^1(D)),$ one has $\rho_0=\int_D f_0(x,r) dr\in L^2(\mathbb{T}^2).$
\end{remark}
First, similar argument to the proof of \autoref{propo-stab-} ensures
\begin{lemma}\label{lem-estimate-tau-limit}
  Let $\tau>0$ and    $f_{\tau,0}\in H_0$ such that H$_1$ holds. Then the  unique solution $f_\tau$ of \eqref{limit-PDE-final-NOCUT-V1*****} satisfies for every $t\ge 0$
     \begin{align}
          \xi\tau\Vert f_\tau(t)\Vert_{H_0}^2+2\int_0^t\int_{\mathbb{T}^2}\int_D \mathbf{M}_0 (1+\alpha\vert r\vert^2)\vert \nabla_r(\dfrac{f_\tau}{\mathbf{M}_0})\vert^2 drdxds \leq   \xi\tau\Vert f_{\tau, 0}\Vert_{H_0}^2\leq \xi\tau\Lambda.
      \end{align}
\end{lemma}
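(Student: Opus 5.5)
The plan is to repeat the energy argument of \autoref{propo-stab-}, this time keeping track of the factor $\frac{1}{\zeta\tau}$ in front of the operator. Throughout, I read the constant $\xi$ in the statement as $\zeta$, the constant in $\beta=\zeta\tau$.

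First, test with the solution itself. By \autoref{THm-eps-nocut}, $f_\tau\in L^2(0,T;V_0)\cap C([0,T];H_0)$. Equation \eqref{limit-PDE-final-NOCUT-V1*****} then gives $\partial_t f_\tau\in L^2(0,T;(V_0)')$, so $\Phi=f_\tau(s)$ is an admissible test function for a.e. $s$. The Lions--Magenes identity for the Gelfand triple $V_0\hookrightarrow H_0\hookrightarrow V_0'$ (\cite[Theorem 3.1.]{Lions-Magenes}) then yields, for every $t\in[0,T]$,
\begin{align*}
\Vert f_\tau(t)\Vert_{H_0}^2-\Vert f_{\tau,0}\Vert_{H_0}^2=-\dfrac{2}{\zeta\tau}\int_0^t\int_{\mathbb{T}^2}\int_D \mathbf{M}_0(I+\mathcal{A}(r))\nabla_r\Big(\dfrac{f_\tau}{\mathbf{M}_0}\Big)\cdot\nabla_r\Big(\dfrac{f_\tau}{\mathbf{M}_0}\Big)\,drdxds .
\end{align*}

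Second, bound the quadratic form from below pointwise. From \eqref{limit-matrix}, $\mathcal{A}(r)=\alpha(3|r|^2I-2r\otimes r)$. Its eigenvalues are $\alpha|r|^2$ in the direction of $r$ and $3\alpha|r|^2$ in the orthogonal direction. Hence $(I+\mathcal{A}(r))v\cdot v\ge(1+\alpha|r|^2)|v|^2$ for every $v\in\mathbb{R}^2$. Inserting this bound with $v=\nabla_r(f_\tau/\mathbf{M}_0)$ and multiplying the identity by $\zeta\tau$ gives
\begin{align*}
\zeta\tau\Vert f_\tau(t)\Vert_{H_0}^2+2\int_0^t\int_{\mathbb{T}^2}\int_D\mathbf{M}_0(1+\alpha|r|^2)\Big|\nabla_r\Big(\dfrac{f_\tau}{\mathbf{M}_0}\Big)\Big|^2drdxds\le\zeta\tau\Vert f_{\tau,0}\Vert_{H_0}^2 .
\end{align*}

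Third, conclude with (H$_1$), which gives $\sup_\tau\Vert f_{\tau,0}\Vert_{H_0}^2\le\mathbf{\Lambda}$ and hence the final bound $\zeta\tau\mathbf{\Lambda}$. Nothing depends on $T$, so the estimate holds for all $t\ge0$.

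The only point needing care is the first step: the justification of the energy identity and the admissibility of $f_\tau$ as a test function. This is handled exactly as in \autoref{propo-stab-}, since the weight $\mathbf{M}_0$ and the space $V_0$ there are the same as here once $\beta=\zeta\tau$ is substituted. The remaining steps are routine.
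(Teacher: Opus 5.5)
Your proposal is correct and matches the paper, which proves this lemma by repeating the energy argument of \autoref{propo-stab-}: test with $f_\tau$, apply the Lions--Magenes identity in the triple $V_0\hookrightarrow H_0\hookrightarrow V_0'$, use the pointwise bound $(I+\mathcal{A}(r))v\cdot v\ge(1+\alpha|r|^2)|v|^2$, and multiply by $\zeta\tau$. Your reading of $\xi$ as the constant $\zeta$ in $\beta=\zeta\tau$ is the right one.
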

\begin{theorem}\label{Thm-singular-limit-PF}
    Assume that $f_{\tau, 0} \to f_0$ in $H_0$ as $\tau \downarrow 0.$ Then the following  hold.
    \begin{itemize}
        \item $ f_\tau$ converges to 
        $ \rho_0\otimes \M_0$ in $L^2(0,T;H_0)$, 
     where  $\rho_0(x)=\int_D f_0(x,r) dr$ and $f_\tau$ is the unique solution  to \eqref{limit-PDE-final-NOCUT-V1*****}. Moreover, the following estimate (convergence rate) holds
              \begin{align}\label{ineq-converg-equi}
 \int_0^T  \Vert f_{\tau}(t)-\rho_0\otimes \mathbf{M}_0\Vert_{H_0}^2 dt \leq C_P  \xi\tau (\Vert f_{\tau, 0}\Vert_{H_0}^2+\Vert \rho_0 \Vert^2_{L^2(\mathbb{T}^2)})+2T\Vert f_{\tau,0}-f_0\Vert_{H_0}^2.
          \end{align}
          \item Assume that $f_{\tau,0}$ converges to $ \rho_{0}\otimes\mathbf{M}_0$ in $H_0.$ Then,  $ f_\tau$ converges to 
        $ \rho_0\otimes \M_0$ in $C([0,T];H_0)$, 
     where  $\rho_0(x)=\int_D f_0(x,r) dr$. Moreover, the following estimate (convergence rate) holds
     \begin{align}\label{ineq-conv-equi-Cont}
         \displaystyle\sup_{t\in [0,T]}\Vert f_{\tau}(t)-\rho_0\otimes \mathbf{M}_0\Vert_{H_0}^2 \leq   \Vert f_{\tau,0}- \rho_{0}\otimes\mathbf{M}_0\Vert_{H_0}^2.
     \end{align}
    \end{itemize}
  
\end{theorem}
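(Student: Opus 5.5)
We compare $f_\tau$ with the $x$-dependent profile $\rho_0\otimes\mathbf{M}_0$ and split into an energy (dissipation) part and a Poincaré-type part. The first observation is that \eqref{limit-PDE-final-NOCUT-V1*****} acts only in $r$. Hence, testing with $\Phi=\psi(x)\mathbf{M}_0$ (admissible, since $\nabla_r(\Phi/\mathbf{M}_0)=0$) gives $\partial_t\int_D f_\tau\,dr=0$. So the macroscopic density $\rho_\tau(t,x):=\int_D f_\tau(t,x,r)dr$ is constant in time and equals $\rho_{\tau,0}:=\int_D f_{\tau,0}dr$. Since $\mathbf{M}_0$ is normalized and $H_0\hookrightarrow L^2(\mathbb{T}^2;L^1(D))$, Cauchy--Schwarz gives $\Vert\rho_{\tau,0}\otimes\mathbf{M}_0-\rho_0\otimes\mathbf{M}_0\Vert_{H_0}=\Vert\rho_{\tau,0}-\rho_0\Vert_{L^2(\mathbb{T}^2)}\le\Vert f_{\tau,0}-f_0\Vert_{H_0}$.

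The key step is a weighted Poincaré inequality for the kernel $\ker\mathcal{L}=\{c\mathbf{M}_0\}$. We want a constant $C_P$ such that for every $g\in H^1_{\mathbf{M}_0}$,
\begin{align*}
\int_D\Big|\frac{g}{\mathbf{M}_0}-\int_D g\,dr\Big|^2\mathbf{M}_0\,dr\le C_P\int_D\mathbf{M}_0\Big|\nabla_r\Big(\frac{g}{\mathbf{M}_0}\Big)\Big|^2dr .
\end{align*}
Here $\int_D g\,dr$ is exactly the $\mathbf{M}_0$-weighted mean of $g/\mathbf{M}_0$. I would derive this from the discreteness of the spectrum of $-\mathcal{L}$ with one-dimensional kernel (the analogue of \autoref{propo-construct-base}, noted in the remark preceding the statement). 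Equivalently, it follows from the compact embedding $H^1_{\mathbf{M}_0}\hookrightarrow L^2_{\mathbf{M}_0}$ by the usual contradiction argument. The weight $(1+\alpha|r|^2)\ge 1$ only helps.

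Applying this pointwise in $x$ to $g=f_\tau(t,x,\cdot)$, integrating in $x$ and $t$, and using \autoref{lem-estimate-tau-limit} (with $\xi=\zeta$), we get
\begin{align*}
\int_0^T\Vert f_\tau-\rho_{\tau,0}\otimes\mathbf{M}_0\Vert_{H_0}^2dt\le C_P\int_0^T\int_{\mathbb{T}^2}\int_D\mathbf{M}_0\Big|\nabla_r\frac{f_\tau}{\mathbf{M}_0}\Big|^2\le \tfrac{C_P}{2}\zeta\tau\Vert f_{\tau,0}\Vert_{H_0}^2 .
\end{align*}
Combining this with the first paragraph via $(a+b)^2\le 2a^2+2b^2$ yields \eqref{ineq-converg-equi}. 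A term $\zeta\tau\Vert\rho_0\Vert^2$ is allowed as harmless slack. This gives the $L^2(0,T;H_0)$ convergence under (H$_1$).

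For the second item, note that $\rho_0\otimes\mathbf{M}_0$ is itself a stationary solution of \eqref{limit-PDE-final-NOCUT-V1*****}, because $\nabla_r(\rho_0\mathbf{M}_0/\mathbf{M}_0)=0$. We then apply the stability estimate (the analogue of \autoref{propo-stab-}, i.e. \autoref{lem-estimate-tau-limit} for the difference) to $f_\tau-\rho_0\otimes\mathbf{M}_0$. This gives $\sup_t\Vert f_\tau(t)-\rho_0\otimes\mathbf{M}_0\Vert_{H_0}^2\le\Vert f_{\tau,0}-\rho_0\otimes\mathbf{M}_0\Vert_{H_0}^2\to0$, which is \eqref{ineq-conv-equi-Cont}.

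The main obstacle is the Poincaré inequality with the degenerate weight $\mathbf{M}_0\sim(1-|r|^2)^{\kappa/(2(1+\alpha))}$ and a constant uniform in $\tau$. In the present regime $\alpha=\zeta\lambda/2$ is independent of $\tau$, so $\mathbf{M}_0$ does not depend on $\tau$ and uniformity is automatic. What remains is the compactness of $H^1_{\mathbf{M}_0}\hookrightarrow L^2_{\mathbf{M}_0}$ for every exponent $k>0$. I would first try to cite it from \cite{Masmoudi2008} and the density results of \cite{Masmoudi2013}. Failing that, I would prove it by splitting $D$ into an interior region, where Rellich applies, and a boundary layer of small weighted mass controlled by a one-dimensional weighted Hardy estimate in the radial variable.
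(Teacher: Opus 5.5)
Your proposal is correct and is essentially the paper's argument. The shared ingredients are: mass conservation via the test function $\psi(x)\mathbf{M}_0$; the $\mathbf{M}_0$-weighted Poincaré--Wirtinger inequality in $r$, which the paper cites from \cite[Prop. 3.4.]{Masmoudi2008}, with $C_P$ independent of $\tau$ because $\mathbf{M}_0$ is; the dissipation bound from the energy estimate; and, for the second item, the stability estimate applied to $f_\tau-\rho_0\otimes\mathbf{M}_0$. The only cosmetic difference is that you apply Poincaré to $f_\tau$ around its own $r$-mean $\rho_{\tau,0}\otimes\mathbf{M}_0$ and add the mass mismatch $\Vert\rho_{\tau,0}-\rho_0\Vert_{L^2(\mathbb{T}^2)}$ afterwards, whereas the paper applies it directly to $g_\tau=f_\tau-\rho_0\otimes\mathbf{M}_0$. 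Since $\nabla_r(g_\tau/\mathbf{M}_0)=\nabla_r(f_\tau/\mathbf{M}_0)$, this is the same estimate, and your version even drops the $\Vert\rho_0\Vert_{L^2(\mathbb{T}^2)}^2$ term.
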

\subsubsection*{Proof of \autoref{Thm-singular-limit-PF}}
Let $\tau>0.$
  Since $\mathbf{M}_0$ solves $\kappa rF(r)\M_0 + (I+\mathcal{A})\nabla_r \M_0=0$, set   $$g_\tau(t,x,r)= f_\tau(t,x,r)- \rho_0(x)\mathbf{M}_0(r) \text{ and } g_{\tau,0}=f_{\tau,0}- \rho_{0}\otimes\mathbf{M}_0.$$ Next, we can easily verify that $g_\tau$ satisfies $g_\tau(0)=g_{\tau,0}$ and 
   \begin{align}\label{limit-PDE-g}
    \langle\partial_t g_\tau,\Phi\rangle_{(V_0)^\prime,V_0}=- \dfrac{1}{\zeta \tau}\int_{\mathbb{T}^2}\int_D\bigl(\mathbf{M}_0 (I+\mathcal{A}(r))\nabla_r(\dfrac{g_\tau}{\mathbf{M}_0})\bigr)\cdot \nabla_r\left(\dfrac{\Phi}{\mathbf{M}_0}\right) drdx, \forall \Phi\in V_0.
\end{align}
Notice that $\rho_{0}\otimes\mathbf{M}_0\in V_0,$ which ensures that $g_\tau$ is an admissible test function in \eqref{limit-PDE-g}. Therefore, similarly to \autoref{propo-stab-}, we get
\begin{align}\label{ineq-limit}    \xi\tau \sup_{t\in [0,T]}\Vert g_\tau(t)\Vert_{H_0}^2+2\int_0^T\int_{\mathbb{T}^2}\int_D \mathbf{M}_0 (1+\alpha\vert r\vert^2)\vert \nabla_r(\dfrac{g_\tau}{\mathbf{M}_0})\vert^2 drdxds \leq   \xi\tau\Vert g_{\tau, 0}\Vert_{H_0}^2.
      \end{align}
      Let us discuss the consequence of the last inequality \eqref{ineq-limit}.\\
     From \eqref{ineq-limit}, we get $\displaystyle\sup_{t\in [0,T]}\Vert g_\tau(t)\Vert_{H_0}^2 \leq   \Vert g_{\tau, 0}\Vert_{H_0}^2,$ which ensures that $\displaystyle\lim_{\tau\to 0}f_\tau= \rho_0\otimes \mathbf{M}_0$ in $C([0,T],H_0)$ if $ \displaystyle\lim_{\tau\to 0}f_{\tau,0}= \rho_{0}\otimes\mathbf{M}_0$ in $H_0.$
          On the other hand, the same inequality \eqref{ineq-limit} ensures
          \begin{align}\label{ineq-key-proof-sing}
              \int_0^T\int_{\mathbb{T}^2}\int_D \mathbf{M}_0 (1+\alpha\vert r\vert^2)\vert \nabla_r(\dfrac{g_\tau}{\mathbf{M}_0})\vert^2 drdxds \leq   \dfrac{\xi\tau}{2}\Vert g_{\tau, 0}\Vert_{H_0}^2 \leq \xi\tau (\Vert f_{\tau, 0}\Vert_{H_0}^2+\Vert \rho_0 \Vert^2_{L^2(\mathbb{T}^2)}).
          \end{align}
          Let us prove that \eqref{ineq-key-proof-sing} ensures \eqref{ineq-converg-equi}. Notice that \eqref{ineq-limit} ensures that $\dfrac{g_\tau}{\mathbf{M}_0} \in L^2(0, T; L^2(\T^2; H^1(D; \M_0dr)))$ for any $\tau >0.$
          By using  Poincaré-Wirtinger inequality  with the weight $\mathbf{M}_0$ (see \textit{e.g.} \cite[Prop. 3.4.]{Masmoudi2008}), we write 
       \begin{align}\label{Poincare}
\int_0^T \int_{\mathbb T^2} \int_D 
\frac{|g_\tau(x,r,t)|^2}{\mathbf M_0(r)}
\,dr\,dx\,dt
&\le C_P
\int_0^T \int_{\mathbb T^2} \int_D
\mathbf M_0(r)
\left|
\nabla_r \!\left(
\frac{g_\tau(x,r,t)}{\mathbf M_0(r)}
\right)
\right|^2
dr\,dx\,dt \\
&\quad
+2
\int_0^T \int_{\mathbb T^2}
\left(
\int_D g_\tau(x,r,t)\,dr
\right)^2
dx\,dt,\notag
       \end{align}
where $C_P>0$, independent of $g_\tau$ and $\tau$.  Next, for any $t\in [0,T],$ let $\xi\in C^\infty([0,t]),$ $\psi\in C^\infty(\T^2)$ and set $\Phi=\psi\mathbf{M}_0$ in  \eqref{limit-PDE-g}. Then,  multiply the resulting equation by $\xi$ and integrate from $0$ to $t$  to get 
\begin{align*}
 0&=   \int_0^t\langle\partial_s g_\tau,\psi\mathbf{M}_0\rangle_{(V_0)^\prime,V_0}\xi ds\\&=-\int_0^t\langle g_\tau,\psi\mathbf{M}_0\rangle_{(V_0)^\prime,V_0}\xi^\prime ds+ (  g_\tau(t) ,\psi\mathbf{M}_0)_{H_0}\xi(t)- ( g_{\tau,0},\psi\mathbf{M}_0)_{H_0}\xi(0).
\end{align*}
In particular, set $\xi\equiv 1$ and using the definition of $H_0$ we obtain
\begin{align}\label{limit-PDE-g-rho}
\text{for any } t\in [0,T]:   0=\int_{\mathbb T^2} \int_D  g_\tau(t,x,r) \psi(x) drdx-\int_{\mathbb T^2} \int_D g_{\tau,0}(x,r)\psi(x) drdx,
\end{align}
which gives $
   \displaystyle \int_{\mathbb T^2} \bigl(\int_D  g_\tau(t,x,r)dr- \int_D g_{\tau,0}(x,r) dr\bigr)\psi(x) dx=0.
$ Since $\psi$ is arbitrary and by density  of smooth functions in $L^2(\T^2)$, we obtain 
\begin{align}\label{eq-limit-rho-1}
    \text{for any } t\in [0,T], \text{ a.e. } x\in\T^2: \quad \int_D  g_\tau(t,x,r)dr= \int_D g_{\tau,0}(x,r) dr.
\end{align}
Moreover, notice that
\begin{align}\label{eq-limit-rho-2}
    \int_D g_{\tau,0}(x,r)\,dr=\int_D f_{\tau,0}(x,r)\,dr-\rho_0(x)=\rho_\tau(x)-\rho_0(x).
\end{align}
Consequently, by using \eqref{eq-limit-rho-1} and \eqref{eq-limit-rho-2} we have
\begin{align*}
    \int_0^T \int_{\mathbb T^2}
\left(
\int_D g_\tau(x,r,t)\,dr
\right)^2
dx\,dt&= \int_0^T \int_{\mathbb T^2}
\left(\int_D g_{\tau,0}(x,r)\,dr
\right)^2
dx\,dt\\&=\int_0^T \int_{\mathbb T^2}
\left(\rho_\tau(x)-\rho_0(x)
\right)^2
dx\,dt=T\Vert \rho_\tau-\rho_0 \Vert^2_{L^2(\mathbb T^2)}. 
\end{align*}
Finally, by using Cauchy Schwarz inequality, we get
\begin{align*}
    \Vert \rho_\tau-\rho_0 \Vert^2_{L^2(\mathbb T^2)}=\int_{\mathbb T^2}
\left(\int_D [f_{\tau,0}(x,r)-f_0(x,r)]\,dr
\right)^2
dx\leq  \Vert f_{\tau,0}-f_0\Vert_{H_0}^2.
\end{align*}
Thus, by inserting the last inequality in \eqref{Poincare}, we proved
  \begin{align}\label{Poincare-1}
\int_0^T  \Vert g_{\tau}(t)\Vert_{H_0}^2 dt
&\le C_P
\int_0^T \int_{\mathbb T^2} \int_D
\mathbf M_0
\left|
\nabla_r \!\left(
\frac{g_\tau(t)}{\mathbf M_0}
\right)
\right|^2
dr\,dx\,dt 
+2T\Vert f_{\tau,0}-f_0\Vert_{H_0}^2.
       \end{align}
By combining the last inequality \eqref{Poincare-1} and \eqref{ineq-key-proof-sing}, we get 
         \begin{align}\label{ineq-key-proof-sing++}
 \int_0^T  \Vert g_{\tau}(t)\Vert_{H_0}^2 dt \leq C_P  \xi\tau (\Vert f_{\tau, 0}\Vert_{H_0}^2+\Vert \rho_0 \Vert^2_{L^2(\mathbb{T}^2)})+2T\Vert f_{\tau,0}-f_0\Vert_{H_0}^2,
          \end{align}
          which concludes the proof.
          
         \begin{acknowledgements}  The research of  Y.T. is
funded by the European Union (ERC, NoisyFluid, No. 101053472). Views and opinions expressed are however those of the author only and do not necessarily reflect those of the European Union or the European Research Council. Neither the European Union nor the granting authority can be held responsible for them. The authors  would like to thank  Franco Flandoli for   valuable discussion. 
\end{acknowledgements}
\bibliographystyle{plain} 
\bibliography{FENE-Tahraoui-Butori}
\end{document}